\newtheorem{thm}{Theorem}[section]
\newtheorem{cor}[thm]{Corollary}
\newtheorem{lem}[thm]{Lemma}
\newtheorem{prop}[thm]{Proposition}
\theoremstyle{definition}
\newtheorem{definition}{Definition}[section]
\newtheorem{example}{Example}[section]
\newtheorem*{remark}{Remark}
\begin{document}
\title{Supercharacter theories constructed by the method of little groups}
\author{Scott Andrews}
\address{Department of Mathematics \\ University of Colorado Boulder \\ Boulder, CO 80309}
\email{scott.andrews@colorado.edu}
\urladdr{http://euclid.colorado.edu/~sda/}
\keywords{supercharacter, unipotent group}
\subjclass[2010]{20C33,05E10}

\begin{abstract}The \emph{method of little groups} describes the irreducible characters of semidirect products with abelian normal subgroups in terms of the irreducible characters of the factor groups. We modify this method to construct supercharacter theories of semidirect products with abelian normal subgroups. In particular, we apply this construction to reproduce known supercharacter theories of several families of unipotent groups. We also utilize our method to construct a collection of new supercharacter theories of the unipotent upper-triangular matrices.
\end{abstract}
\maketitle

\section{Introduction}

The idea of a \emph{supercharacter theory} of a finite group was introduced by Diaconis--Isaacs in \cite{MR2373317} in part to study groups whose irreducible characters are difficult to index. Loosely speaking, a supercharacter theory of a group $G$ consists of a partition of $G$ into unions of conjugacy classes along with a set of characters of $G$ which are constant on the blocks of this partition. The study of supercharacter theories began with the group $UT_n(\mathbb{F}_q)$, the group of unipotent upper-triangular matrices with entries in the $q$-element field. Classifying the irreducible representations of $G$ is known to be a `wild' problem (see \cite{MR1056208}). In separate papers, Andr\'e (\cite{MR1338979,MR1896026}), Yan (\cite{yan}), and Diaconis--Isaacs (\cite{MR2373317}) construct a supercharacter theory of $UT_n(\mathbb{F}_q)$ that is relatively easy to work with. Andr\'e--Neto produce an analogous supercharacter theory of the unipotent orthogonal and symplectic groups in \cite{MR2264135,MR2537684}. In \cite{andrews1}, the author reframes this construction in terms of group actions, allowing for it to be extended to many subgroups of the unipotent orthogonal and symplectic groups as well as to the unipotent unitary groups.

\bigbreak

Although supercharacter theories have been constructed for many groups and families of groups, there are few known general techniques for producing supercharacter theories. In \cite{MR2989654}, Hendrickson introduces several such constructions, in particular the $*$\emph{-product} of supercharacter theories. The $*$-product produces a supercharacter theory of a finite group $G$ from a $G$-invariant supercharacter theory of a normal subgroup $N$ of $G$ and a supercharacter theory of $G/N$. Unfortunately, the resulting partition of $G$ is somewhat coarse, and there is often little information to be obtained from these supercharacter theories. Recently, in \cite{lang}, Lang has classified the supercharacter theories of semidirect products where both factor groups are abelian. Many of the unipotent groups that we are interested in have nice semidirect product decompositions in which only the normal factor group is abelian. This observation motivated us to generalize the $*$-product in the case that $G = N \rtimes H$ and $N$ is abelian.

\bigbreak

If $G$ is a finite group of the form $G = M \times N$, the irreducible characters of $G$ (denoted $\text{Irr}(G)$) are indexed by pairs $(\chi,\psi)$ such that $\chi \in \text{Irr}(M)$ and $\psi \in \text{Irr}(N)$. If $G$ only admits a semidirect product decomposition, it is much harder to describe $\text{Irr}(G)$ in terms of the irreducible characters of the factor groups. The \emph{method of little groups} constructs the irreducible characters of $G = N \rtimes H$ in the case that $N$ is abelian. This technique was developed by Frobenius for finite groups and extended to topological groups by Mackey in \cite{MR515581}. Specifically, to each $\psi \in \text{Irr}(N)$ is assigned a subgroup $I_H(\psi)$ of $H$. The irreducible characters of $G$ are described in terms of pairs $(\psi,\chi)$ such that $\psi \in \text{Irr}(N)$ and $\chi \in \text{Irr}(I_H(\psi))$.

\bigbreak

In \cite{MR2774691} Marberg modifies the method of little groups to describe the supercharacters of algebra groups of the form $G = A \rtimes B$, with $A$ and $B$ algebra groups and $B$ supernormal in $G$. We take a different approach, and look more generally at any finite group of the form $G = N \rtimes H$ with $N$ abelian. Our construction assigns to each $\psi \in \text{Irr}(N)$ a subgroup $H_\psi$ of $H$, and to each $H_\psi$ a supercharacter theory. From these choices we construct a supercharacter theory of $G$ where the supercharacters are described in terms of pairs $(\psi,\chi)$ with $\psi \in \text{Irr}(N)$ and $\chi$ a supercharacter of $H_\psi$. One advantage of our method is that the subgroups $H_\psi$ can often be chosen to be nice groups with well-understood supercharacter theories. Furthermore, it is often easy to modify these supercharacter theories to obtain coarser or finer supercharacter theories, as we demonstrate in Section~\ref{seccoarsersct}.

\bigbreak

When applied to the supercharacter theory of $UT_n(\mathbb{F}_q)$, we obtain a description of the supercharacters in terms of supercharacters of nice subgroups of $UT_k(\mathbb{F}_q) \times UT_m(\mathbb{F}_q)$ for any $k$ and $m$ with $k+m=n$. In a sense, this gives an iterative construction of the supercharacter theory of $UT_n(\mathbb{F}_q)$ in terms of supercharacter theories of smaller matrix groups. In other types, our method describes the supercharacters of $UO_{2n}(\mathbb{F}_q)$, $UOSp_{2n}(\mathbb{F}_q)$, and $UU_{2n}(\mathbb{F}_q)$ (the unipotent orthogonal, symplectic, and unitary groups in even dimension) in terms of supercharacters of subgroups of $UT_n(\mathbb{F}_q)$. This gives a construction of these supercharacter theories directly from the type $A$ supercharacter theory.

\bigbreak

 We review the idea of a supercharacter theory in Section~\ref{sct}. In Section~\ref{mlg} we describe the method of little groups and develop properties of characters constructed via the method. We present our general construction in Section~\ref{sctsdp}, and use it to iteratively construct the supercharacters of $UT_n(\mathbb{F}_q)$ in Section~\ref{typea}. In Section~\ref{othertypes} we reproduce the supercharacter theories of the even-dimensional unipotent orthogonal, symplectic, and unitary groups. Finally, in Section~\ref{seccoarsersct} we construct a collection of coarser supercharacter theories of $UT_n(\mathbb{F}_q)$.

\section{Supercharacter theories}\label{sct}

The idea of a supercharacter theory of an arbitrary finite group was introduced by Diaconis--Isaacs in \cite{MR2373317}.

\begin{definition}\label{defsct}
 Let $G$ be a finite group, and suppose that $\mathcal{K}$ is a partition of $G$ into unions of conjugacy classes and $\mathcal{X}$ is a set of characters of $G$. We say that the pair $(\mathcal{K},\mathcal{X})$ is a \emph{supercharacter theory} of $G$ if
\begin{enumerate}
\item[(SCT1)] $|\mathcal{X}|=|\mathcal{K}|$,
\item[(SCT2)] the characters $\chi\in \mathcal{X}$ are constant on the blocks of $\mathcal{K}$, and
\item[(SCT3)] each irreducible character of $G$ is a constituent of exactly one character in $\mathcal{X}$.
\end{enumerate}
The characters $\chi \in \mathcal{X}$ are referred to as \emph{supercharacters} and the blocks $K \in \mathcal{K}$ are called $\emph{superclasses}$.
\end{definition}

\bigbreak

For the purposes of this paper we will be mainly concerned with the supercharacters of a supercharacter theory, and as such want a description of supercharacter theories only in terms of characters.

\bigbreak

Let
\[
        CF(G) = \{f:G \to \mathbb{C} \mid f(x) = f(yxy^{-1}) \text{ for all } x,y \in G\}
\]
be the space of class functions of the finite group $G$. There are two products on this space, the pointwise product
\[
       (fg)(x) = (f \cdot g)(x) = f(x)g(x)
\]
and the convolution product
\[
        (f * g)(x) = \frac{1}{|G|}\sum_{y \in G}f(y)g(y^{-1}x).
\]
Each product gives $CF(G)$ the structure of a commutative semisimple algebra, therefore $CF(G)$ has two natural bases of primitive idempotents (one with respect to each multiplication). The idempotents with respect to the pointwise product are the indicator functions of the conjugacy classes, and the idempotents with respect to the convolution product are the characters $\chi(1)\chi$, where $\chi$ is an irreducible character of $G$.

\bigbreak

Suppose $A \subseteq CF(G)$ is a unital subalgebra under both products; then $A$ has two natural bases of primitive idempotents. The idempotents with respect to the pointwise product are the indicator functions of the blocks of a partition of $G$ into unions of conjugacy classes, and the idempotents with respect to the convolution product are orthogonal characters of $G$. This defines a bijection
\[
        \left\{\begin{array}{l}\text{Subalgebras of } CF(G) \text{ under both the} \\ \text{pointwise and convolution products}\end{array}\right\} \longleftrightarrow \{\text{Supercharacter theories of }G\},
\]
hence an alternative characterization of a supercharacter theory. In particular, we have the following lemma.

\begin{lem}\label{ptprod} Suppose that $\mathcal{X}$ is a set of characters of $\text{Irr}(G)$ such that
\begin{enumerate}
\item the trivial character is in $\mathcal{X}$, and
\item each irreducible character of $G$ is a constituent of exactly one character of $\mathcal{X}$.
\end{enumerate}
Then there exists a partition $\mathcal{K}$ of $G$ such that the pair $(\mathcal{X},\mathcal{K})$ forms a supercharacter theory of $G$ if and only if $\mathcal{X}$ is a linear basis of a subalgebra of $\text{CF}(G)$ under the pointwise product.
\end{lem}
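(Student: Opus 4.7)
The plan is to prove both directions by passing between the partition $\mathcal{K}$ and the subalgebra $A = \operatorname{span}_{\mathbb{C}}(\mathcal{X})$ via the natural correspondence between pointwise subalgebras of $CF(G)$ and partitions of $G$ into unions of conjugacy classes. A preliminary observation used in both directions: by condition (2), distinct elements of $\mathcal{X}$ have disjoint sets of irreducible constituents, so they are pairwise orthogonal in the usual inner product on $CF(G)$ and therefore linearly independent.

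For the direction ``$(\mathcal{X},\mathcal{K})$ is a supercharacter theory $\Rightarrow \mathcal{X}$ is a basis of a pointwise subalgebra'', I would take $B = \{f \in CF(G) : f \text{ is constant on each block of } \mathcal{K}\}$, which is visibly closed under pointwise multiplication and has $\dim B = |\mathcal{K}|$. Axiom (SCT2) gives $\mathcal{X} \subseteq B$, and combining (SCT1) with the linear independence observed above shows $\mathcal{X}$ is a basis of $B$.

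For the converse, I would let $A = \operatorname{span}_{\mathbb{C}}(\mathcal{X})$. Since $A$ is a unital (by condition (1)) commutative subalgebra of $(CF(G),\cdot)$, which has no nonzero nilpotents, $A$ is semisimple and admits a basis of primitive idempotents $\{e_K\}$. Each $e_K$ lies in $CF(G)$ and satisfies $e_K^2 = e_K$ pointwise, so it is $\{0,1\}$-valued and hence the indicator function of a union of conjugacy classes. The relation $\sum_K e_K = \mathbf{1}$ (the unit of $A$, present because of condition (1)) shows that the supports of the $e_K$ form a partition $\mathcal{K}$ of $G$ into unions of conjugacy classes. The axioms then follow formally: (SCT1) from $|\mathcal{K}| = \dim A = |\mathcal{X}|$; (SCT2) from $\mathcal{X} \subseteq A = \operatorname{span}\{e_K\}$, so each $\chi \in \mathcal{X}$ is a linear combination of block indicators; and (SCT3) from condition (2).

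The only ingredient that is more than a routine check is the structural claim that pointwise subalgebras of $CF(G)$ are precisely the ``partition algebras'' spanned by indicators of a coarsening of the conjugacy class partition. I do not expect this to be a real obstacle; it is a standard consequence of the fact that subalgebras of a finite product of fields correspond to equivalence relations on the index set, applied here to $(CF(G),\cdot) \cong \mathbb{C}^{k}$ where $k$ is the number of conjugacy classes of $G$.
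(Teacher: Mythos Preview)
Your proposal is correct and follows essentially the same route as the paper: for the forward direction you identify the span of $\mathcal{X}$ with the algebra of $\mathcal{K}$-constant class functions, and for the converse you extract the partition $\mathcal{K}$ from the primitive idempotents of the semisimple commutative algebra $A=\operatorname{span}_{\mathbb{C}}(\mathcal{X})$. The paper's argument is terser (it defers the forward direction to the preceding discussion of the bijection between bi-subalgebras and supercharacter theories), while you make explicit the linear independence of $\mathcal{X}$ via orthogonality and the reason the idempotents are $\{0,1\}$-valued, but the underlying mechanism is identical.
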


\begin{proof} First suppose that such a partition $\mathcal{K}$ exists. By the above remarks, the linear span of $\mathcal{X}$ is a subalgebra of $\text{CF}(G)$ under the pointwise product.

\bigbreak

Conversely, suppose that $\mathcal{X}$ is a linear basis of a subalgebra $A$ of $\text{CF}(G)$ under the pointwise product. The algebra $A$ is semisimple and commutative, hence has a basis of primitive idempotents. These idempotents must be indicator functions for disjoint subsets of $G$; as the trivial character is in $A$, these subsets partition $G$. Let $\mathcal{K}$ denote this partition of $G$; we have that
\begin{enumerate}
\item $|\mathcal{K}| = |\mathcal{X}|$, and
\item the characters in $\mathcal{X}$ are constant on the blocks of $\mathcal{K}$.
\end{enumerate}
By assumption, each irreducible character of $G$ is a constituent of exactly one character in $\mathcal{X}$.
\end{proof}

Lemma~\ref{ptprod} allows for the construction of a supercharacter theory without reference to the superclasses.

\bigbreak

Perhaps the most natural method of building a supercharacter theory of a group from supercharacter theories of subgroups is the direct product of supercharacter theories. Let $G = M \times N$, and suppose that $M$ and $N$ are each equipped with supercharacter theories, denoted $(\mathcal{K},\mathcal{X})$ and $(\mathcal{L},\mathcal{Y})$. Define
\begin{align*}
        \mathcal{M} &= \{A \times B \mid A \in \mathcal{K} \text{ and } B \in \mathcal{L}\}\text{ and} \\
        \mathcal{Z} &= \{\chi \times \psi \mid \chi \in \mathcal{X} \text{ and } \psi \in \mathcal{Y}\}.
\end{align*}

\begin{prop}[{\cite[Proposition 8.1]{MR2989654}}] The pair $(\mathcal{M},\mathcal{Z})$ defines a supercharacter theory of $G$.
\end{prop}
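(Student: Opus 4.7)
The plan is to verify the three axioms (SCT1), (SCT2), (SCT3) of Definition~\ref{defsct} directly, since the hypotheses of this proposition are so transparent that invoking Lemma~\ref{ptprod} would not really save work, and we also need to confirm that $\mathcal{M}$ is a partition of $G$ into unions of $G$-conjugacy classes. First I would observe that $G = M \times N$ has conjugacy classes of the form $C \times D$, where $C$ is an $M$-conjugacy class and $D$ is an $N$-conjugacy class. Since each $A \in \mathcal{K}$ is a union of $M$-conjugacy classes and each $B \in \mathcal{L}$ is a union of $N$-conjugacy classes, each $A \times B$ is a union of $G$-conjugacy classes, and the collection $\mathcal{M}$ clearly partitions $G$.

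Next, (SCT1) follows from the counting identity
\[
|\mathcal{Z}| = |\mathcal{X}|\,|\mathcal{Y}| = |\mathcal{K}|\,|\mathcal{L}| = |\mathcal{M}|,
\]
where the middle equality uses that $(\mathcal{K},\mathcal{X})$ and $(\mathcal{L},\mathcal{Y})$ are supercharacter theories. For (SCT2), the external product satisfies $(\chi \times \psi)(m,n) = \chi(m)\psi(n)$; since $\chi$ is constant on each block of $\mathcal{K}$ and $\psi$ is constant on each block of $\mathcal{L}$, the character $\chi \times \psi$ is constant on each $A \times B \in \mathcal{M}$.

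For (SCT3), I would use the standard fact that $\text{Irr}(G) = \{\alpha \times \beta : \alpha \in \text{Irr}(M),\ \beta \in \text{Irr}(N)\}$, together with the fact that external products respect decompositions: if $\chi = \sum_\alpha c_\alpha \alpha$ and $\psi = \sum_\beta d_\beta \beta$ as nonnegative integer combinations of irreducibles, then $\chi \times \psi = \sum_{\alpha,\beta} c_\alpha d_\beta (\alpha \times \beta)$. Hence $\alpha \times \beta$ appears in $\chi \times \psi$ if and only if $\alpha$ appears in $\chi$ and $\beta$ appears in $\psi$. By hypothesis each such $\alpha$ is a constituent of a unique $\chi \in \mathcal{X}$ and each $\beta$ is a constituent of a unique $\psi \in \mathcal{Y}$, so $\alpha \times \beta$ is a constituent of a unique $\chi \times \psi \in \mathcal{Z}$.

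None of the steps presents a real obstacle; the only point requiring any care is the bookkeeping that conjugacy classes, characters, and constituency all factor through the direct product decomposition, and this is essentially the content of the classical description of $\text{Irr}(M \times N)$.
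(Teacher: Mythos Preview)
Your argument is correct. The paper does not supply its own proof of this proposition; it is quoted verbatim from Hendrickson \cite[Proposition 8.1]{MR2989654} and used as a black box. So there is nothing to compare against, and your direct verification of (SCT1)--(SCT3) is exactly the kind of elementary argument one would expect.

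One small point worth making explicit in your write-up: when you assert $|\mathcal{Z}| = |\mathcal{X}|\,|\mathcal{Y}|$ in the verification of (SCT1), you are implicitly using that distinct pairs $(\chi,\psi)$ yield distinct characters $\chi \times \psi$. This is true, and in fact follows from your own (SCT3) argument (if $\chi_1 \times \psi_1$ and $\chi_2 \times \psi_2$ shared an irreducible constituent $\alpha \times \beta$, then $\alpha$ would force $\chi_1 = \chi_2$ and $\beta$ would force $\psi_1 = \psi_2$), but it is worth a sentence so the counting is airtight.
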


The supercharacter theory $(\mathcal{M},\mathcal{Z})$ is referred to as the \emph{direct product} of $(\mathcal{K},\mathcal{X})$ and $(\mathcal{L},\mathcal{Y})$.

\bigbreak

Suppose that $G$ has a normal subgroup $N$ that is equipped with a supercharacter theory $(\mathcal{K},\mathcal{X})$. We say that this supercharacter theory is $G$\emph{-invariant} if for each $g \in G$ and $n \in N$, $n$ and $gng^{-1}$ are in the same superclass. Suppose that $N$ is equipped with a $G$-invariant supercharacter theory $(\mathcal{K},\mathcal{X})$ and $G/N$ is equipped with a supercharacter theory $(\mathcal{L},\mathcal{Y})$. Define

\begin{align*}
        \mathcal{M} &= \mathcal{K} \cup \{LN\mid L \in \mathcal{L}-\{1\}\} \text{ and}\\
        \mathcal{Z} &= \{\text{Ind}_N^G(\chi) \mid \chi \in \mathcal{X}-\{1_N\}\}\cup\{\text{Inf}_{G/N}^G(\psi) \mid \psi \in \mathcal{Y}\}.
\end{align*}

\begin{prop}[{\cite[Theorem 4.3]{MR2989654}}] The pair $(\mathcal{M},\mathcal{Z})$ defines a supercharacter theory of $G$.
\end{prop}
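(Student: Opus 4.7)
The plan is to verify the three axioms (SCT1), (SCT2), (SCT3) of Definition~\ref{defsct} directly. The easy inputs are that $\mathcal{K}$ partitions $N$ and $\mathcal{L}$ partitions $G/N$, that $\{1\} \in \mathcal{L}$ (every supercharacter theory isolates the identity as a singleton block), and the standard fact that, since $N$ is normal, $\mathrm{Ind}_N^G(\chi)$ vanishes off $N$ and, for $n \in N$, satisfies $\mathrm{Ind}_N^G(\chi)(n) = [G:N]\chi(n)$.

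First I would handle (SCT1) by counting: $|\mathcal{M}| = |\mathcal{K}| + (|\mathcal{L}|-1)$ and $|\mathcal{Z}| = (|\mathcal{X}|-1) + |\mathcal{Y}|$, and these agree because $|\mathcal{K}|=|\mathcal{X}|$ and $|\mathcal{L}|=|\mathcal{Y}|$. Then for (SCT2) I must check two things. That $\mathcal{M}$ is a partition of $G$ into unions of $G$-conjugacy classes follows because the blocks in $\mathcal{K}$ partition $N$ into unions of $G$-conjugacy classes (this is exactly $G$-invariance of $(\mathcal{K},\mathcal{X})$), while the sets $LN$ for $L \in \mathcal{L} - \{1\}$ partition $G \setminus N$ and are unions of $G$-conjugacy classes because $gN \mapsto xgNx^{-1} = xgx^{-1}N$ preserves $G/N$-conjugacy classes and hence superclasses of $G/N$. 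Constancy of characters is handled case by case: $\mathrm{Inf}_{G/N}^G(\psi)$ is constant on each $K \subseteq N$ (it takes the value $\psi(1)$ there) and constant on $LN$ (since $\psi$ is constant on $L$), while $\mathrm{Ind}_N^G(\chi)$ vanishes on $LN$ for $L \neq \{1\}$ and equals $[G:N]\chi$ on $N$, which is constant on each $K \in \mathcal{K}$.

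The substantive step is (SCT3). I would argue by cases on whether an irreducible $\phi \in \mathrm{Irr}(G)$ satisfies $N \subseteq \ker\phi$. If $N \subseteq \ker \phi$ then $\phi = \mathrm{Inf}_{G/N}^G(\bar\phi)$ for some $\bar\phi \in \mathrm{Irr}(G/N)$, and $\bar\phi$ is a constituent of a unique $\psi \in \mathcal{Y}$, so $\phi$ is a constituent of the corresponding unique $\mathrm{Inf}_{G/N}^G(\psi) \in \mathcal{Z}$; conversely $\phi$ cannot be a constituent of any $\mathrm{Ind}_N^G(\chi)$ with $\chi \neq 1_N$ because by Frobenius reciprocity this would require $\chi$ to contain $1_N$, forcing $\chi = 1_N$. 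If $N \not\subseteq \ker \phi$ then $\phi$ cannot appear in any inflation (every constituent of such an inflation is trivial on $N$), and by Frobenius reciprocity $\phi$ is a constituent of $\mathrm{Ind}_N^G(\chi)$ iff some constituent of $\mathrm{Res}_N^G(\phi)$ lies in $\chi$; by Clifford's theorem $\mathrm{Res}_N^G(\phi)$ is a sum of $G$-conjugates of a single non-trivial irreducible of $N$, so I need to know that this single $G$-orbit lies in a unique non-trivial supercharacter.

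The main obstacle, then, is extracting from $G$-invariance of the superclasses the statement that the partition of $\mathrm{Irr}(N)$ induced by $\mathcal{X}$ is a refinement of the orbit partition under $G$-action (equivalently, that each $\chi \in \mathcal{X}$ is $G$-invariant as a function on $N$). I would handle this via the subalgebra characterization following Lemma~\ref{ptprod}: conjugation by $g \in G$ is an algebra automorphism of $\mathrm{CF}(N)$ under both products that fixes each indicator function $e_K$ (by $G$-invariance of superclasses), hence preserves the subalgebra $A$ spanned by $\mathcal{X}$; then from $\chi^g \in A$ and the facts that $\chi^g$ is a character orthogonal to every $\chi' \in \mathcal{X} - \{\chi^g\}$, one concludes $\chi^g = \chi$ and, consequently, each $\chi \in \mathcal{X}$ decomposes into a union of $G$-orbits in $\mathrm{Irr}(N)$. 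With this in hand, Clifford's theorem produces a unique $\chi \in \mathcal{X} - \{1_N\}$ containing the constituents of $\mathrm{Res}_N^G(\phi)$, completing (SCT3).
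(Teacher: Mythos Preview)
The paper does not supply its own proof of this proposition; it is quoted as \cite[Theorem~4.3]{MR2989654} and used as a black box. So there is no argument in the paper to compare yours against. That said, your proof is correct and self-contained: the count for (SCT1), the case analysis for (SCT2), and the Clifford-theoretic split for (SCT3) all go through.

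Two minor remarks on presentation. First, the formula $\mathrm{Ind}_N^G(\chi)(n)=[G:N]\chi(n)$ is not a ``standard fact'' for arbitrary class functions on a normal subgroup; it requires $\chi^g=\chi$ for all $g\in G$. In your setting this does hold, but it is exactly the $G$-invariance of the supercharacters that you establish later, so you should either postpone using the formula or note up front that it follows from $G$-invariance. Second, your route to that $G$-invariance via the subalgebra characterization is more elaborate than necessary: since each $\chi\in\mathcal{X}$ is constant on superclasses and the superclasses are $G$-stable by hypothesis, one has $\chi(gng^{-1})=\chi(n)$ directly, i.e.\ $\chi^g=\chi$. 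This one-line observation already gives you both the induction formula on $N$ and the fact that each $G$-orbit in $\mathrm{Irr}(N)$ lies inside a single supercharacter, which is all you need for the Clifford step.
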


We refer to this supercharacter theory as the $*$\emph{-product} of the supercharacter theories $(\mathcal{K},\mathcal{X})$ and $(\mathcal{L},\mathcal{Y})$, and write
\[
        (\mathcal{M},\mathcal{Z}) = (\mathcal{K},\mathcal{X})*(\mathcal{L},\mathcal{Y}).
\]

We can also construct supercharacter theories of a group from other supercharacter theories of the same group. The set of supercharacter theories of a finite group form a lattice, the properties of which are studied in depth in \cite{MR2989654}. The join of two supercharacter theories has a particularly nice description.

\begin{lem}[{\cite[Proposition 3.3]{MR2989654}}]\label{lemscljoin} Let $(\mathcal{K}_1,\mathcal{X}_1)$ and $(\mathcal{K}_2,\mathcal{X}_2)$ be supercharacter theories of a finite group $G$. Then the superclasses of $(\mathcal{K}_1,\mathcal{X}_1)\vee(\mathcal{K}_2,\mathcal{X}_2)$ correspond to the finest partition of $G$ whose blocks are unions of blocks of $\mathcal{K}_1$ and unions of blocks of $\mathcal{K}_2$.
\end{lem}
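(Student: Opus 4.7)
The plan is to leverage the bijection between supercharacter theories of $G$ and unital subalgebras of $CF(G)$ that are closed under both the pointwise and convolution products, as noted in the discussion preceding Lemma~\ref{ptprod}. Let $A_i \subseteq CF(G)$ denote the subalgebra corresponding to $(\mathcal{K}_i,\mathcal{X}_i)$ for $i=1,2$. Since both $A_1$ and $A_2$ are unital subalgebras under both products, so is $A_1 \cap A_2$, and therefore $A_1 \cap A_2$ itself corresponds to a supercharacter theory $(\mathcal{K},\mathcal{X})$ of $G$. The proof then has two tasks: identify the blocks of $\mathcal{K}$ combinatorially, and verify that this supercharacter theory really is the join.

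For the first task, I would use that $f \in A_i$ if and only if $f$ is constant on each block of $\mathcal{K}_i$, so $A_1 \cap A_2$ consists of exactly those class functions simultaneously constant on every block of $\mathcal{K}_1$ and every block of $\mathcal{K}_2$. Let $\mathcal{K}$ denote the finest partition of $G$ whose blocks are unions of $\mathcal{K}_1$-blocks and unions of $\mathcal{K}_2$-blocks; equivalently, $\mathcal{K}$ is the partition associated with the equivalence relation generated by ``lying in a common $\mathcal{K}_1$-block or a common $\mathcal{K}_2$-block.'' A short zig-zag argument shows that a function $f$ is constant on every $\mathcal{K}_i$-block ($i=1,2$) if and only if it is constant on every block of $\mathcal{K}$: given $x,y$ in the same $\mathcal{K}$-block, there is a chain $x = z_0, z_1, \ldots, z_n = y$ with consecutive pairs in a common $\mathcal{K}_1$- or $\mathcal{K}_2$-block, which forces $f(x) = f(z_1) = \cdots = f(y)$. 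Thus the indicator functions $\{\mathbf{1}_K\}_{K \in \mathcal{K}}$ form a basis of $A_1 \cap A_2$ as a pointwise algebra, identifying $\mathcal{K}$ as the partition of superclasses.

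For the second task, I would observe that the lattice order on supercharacter theories (refinement of superclass partitions) translates under the bijection to reverse inclusion of subalgebras: a finer partition produces more indicator functions and therefore a larger subalgebra. Consequently the join of $(\mathcal{K}_1,\mathcal{X}_1)$ and $(\mathcal{K}_2,\mathcal{X}_2)$ corresponds, on the subalgebra side, to the largest doubly-closed subalgebra contained in both $A_1$ and $A_2$, which is precisely $A_1 \cap A_2$. Combining this with the identification above gives the claim.

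The only delicate point is the zig-zag argument and the bookkeeping about what ``finest common coarsening'' means, since a block-of-blocks decomposition for $\mathcal{K}_1$ and for $\mathcal{K}_2$ individually must be reconciled into a single partition. Everything else follows formally from the subalgebra/supercharacter theory correspondence.
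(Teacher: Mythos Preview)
Your argument is correct. Note, however, that the paper does not supply its own proof of this lemma: it is quoted verbatim as \cite[Proposition~3.3]{MR2989654} and left unproved. Your approach via the bijection with doubly-closed unital subalgebras of $CF(G)$ is exactly in the spirit of the paper's Section~\ref{sct}, and is essentially the argument Hendrickson gives; there is nothing to compare against in the present paper.
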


\section{The method of little groups}\label{mlg}

Clifford theory studies the relationship between the irreducible characters of a group $G$ and those of a normal subgroup $N$ of $G$. The method of little groups is one application of Clifford theory; in this section we develop some background from Clifford theory and describe the method of little groups.

\bigbreak

First we will establish some notation. If $g,h \in G$, we will often write ${}^h g = hgh^{-1}$ and $g^h = h^{-1}gh$. Similarly, if $\alpha:G \to \mathbb{C}$, the function ${}^h\alpha$ is defined by $({}^h\alpha)(g) = \alpha(g^h)$.

\bigbreak

Let $G$ be a finite group with normal subgroup $N$. If $\psi$ is a character of $N$, let $I_G(\psi)$ denote the stabilizer of $\psi$ under the conjugation action of $G$ (this is known as the \emph{inertial subgroup} of $\psi$). Lemmas~\ref{lem1}, \ref{lem2}, and \ref{extch}, along with Theorem~\ref{thmmlg} and Corollary~\ref{cormlg}, all follow from results in \cite[Section 11]{MR1038525}.

\begin{lem}\label{lem1} Let $H$ be a subgroup of $I_G(\psi)$, and let $\psi \in \textup{Irr}(N)$ be a character that can be extended to a character $\widetilde{\psi}$ of $H$. Then:
\begin{enumerate}
\item $\widetilde{\psi}$ is an irreducible character of $H$;
\item if $\chi$ is an irreducible character of $H/N$, then $\textup{Inf}_{H/N}^H(\chi)$ is an irreducible character of $H$; and
\item the set $\{\widetilde{\psi} \cdot \textup{Inf}_{H/N}^H(\chi)\mid \chi \in \textup{Irr}(H/N)\}$ is a set of distinct irreducible characters of $H$ and is independent of the choice of extension $\widetilde{\psi}$.
\end{enumerate}
\end{lem}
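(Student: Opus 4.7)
My plan is to handle the three parts in order, with the substantial new work in part (3).

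For (1), I argue by contradiction via restriction: if $\widetilde{\psi} = \chi_1 + \chi_2$ for nonzero characters of $H$, then $\psi = \chi_1|_N + \chi_2|_N$ would decompose $\psi$ into a sum of two nonzero characters, contradicting its irreducibility. For (2), $\text{Inf}_{H/N}^H(\chi)$ is the character of the representation obtained by precomposing an irreducible representation of $H/N$ with the quotient $H \to H/N$, so it is a character; and using $\text{Inf}(\chi)(h) = \chi(hN)$ and grouping the sum over $H$ by cosets of $N$ gives $\langle \text{Inf}(\chi), \text{Inf}(\chi)\rangle_H = \langle \chi, \chi\rangle_{H/N} = 1$, so it is irreducible.

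For the irreducibility and pairwise distinctness in (3), I would prove the inner product formula
\[
\langle \widetilde{\psi} \cdot \text{Inf}(\chi), \widetilde{\psi} \cdot \text{Inf}(\chi')\rangle_H = \langle \chi, \chi'\rangle_{H/N}.
\]
Expanding the left side and grouping by $N$-cosets reduces this identity to the claim $\sum_{n \in N}|\widetilde{\psi}(hn)|^2 = |N|$ for every $h \in H$. I would prove the claim by choosing a unitary representation $\rho$ of $H$ affording $\widetilde{\psi}$, so that $\rho|_N$ is an irreducible unitary representation of degree $d = \psi(1)$; Schur orthogonality applied to the matrix entries of $\rho|_N$ then yields $\sum_{n}|\text{tr}(\rho(h)\rho(n))|^2 = \frac{|N|}{d}\text{tr}(\rho(h)\rho(h)^{*}) = |N|$, using the unitarity $\rho(h)\rho(h)^{*} = I$. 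This Schur-orthogonality step is the technical heart of the argument.

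Finally, for independence of $\widetilde{\psi}$, let $\widetilde{\psi}'$ be another extension of $\psi$ to $H$. The class function $\widetilde{\psi}' \overline{\widetilde{\psi}}$ restricts to $|\psi|^2$ on $N$, so Frobenius reciprocity gives $\langle \widetilde{\psi}' \overline{\widetilde{\psi}}, \text{Ind}_N^H(1_N)\rangle_H = 1$. Since $\text{Ind}_N^H(1_N) = \sum_{\chi \in \text{Irr}(H/N)}\chi(1)\text{Inf}(\chi)$ and each $\langle \widetilde{\psi}', \widetilde{\psi} \cdot \text{Inf}(\chi)\rangle_H$ is $0$ or $1$ by the irreducibility already established in (1) and (3), exactly one $\chi$ contributes, and it satisfies $\chi(1)=1$. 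Hence $\widetilde{\psi}' = \widetilde{\psi} \cdot \text{Inf}(\lambda)$ for some linear character $\lambda$ of $H/N$, so $\widetilde{\psi}' \cdot \text{Inf}(\chi) = \widetilde{\psi} \cdot \text{Inf}(\lambda\chi)$; since $\chi \mapsto \lambda\chi$ permutes $\text{Irr}(H/N)$, the resulting set is unchanged.
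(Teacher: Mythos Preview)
Your argument is correct. Note, however, that the paper does not give its own proof of this lemma: it simply records that the statement ``follows from results in \cite[Section~11]{MR1038525}'' (Isaacs' character theory text), so there is no in-paper proof to compare against line by line.

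It is still worth contrasting your route with the standard textbook one (essentially Gallagher's theorem, Isaacs~6.17). The usual proof establishes the orthonormality in part~(3) indirectly: one writes $\psi^H=\widetilde{\psi}\cdot(1_N)^H=\sum_{\chi}\chi(1)\,\widetilde{\psi}\cdot\mathrm{Inf}(\chi)$, computes $[\psi^H,\psi^H]=[H:N]$ via Frobenius reciprocity and the $H$-invariance of $\psi$, and then uses the inequality
\[
[H:N]=\sum_{\chi,\chi'}\chi(1)\chi'(1)\,[\widetilde{\psi}\cdot\mathrm{Inf}(\chi),\,\widetilde{\psi}\cdot\mathrm{Inf}(\chi')]\ \ge\ \sum_{\chi}\chi(1)^2=[H:N]
\]
to force all diagonal inner products to be $1$ and all off-diagonal ones to be $0$. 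Your approach instead computes $\langle\widetilde{\psi}\cdot\mathrm{Inf}(\chi),\,\widetilde{\psi}\cdot\mathrm{Inf}(\chi')\rangle_H$ directly, reducing to the identity $\sum_{n\in N}|\widetilde{\psi}(hn)|^2=|N|$ and verifying that via Schur orthogonality for the matrix coefficients of a unitary model of $\widetilde{\psi}$. This is a genuinely different (and arguably more transparent) path: it avoids the induced character $\psi^H$ and the squeeze argument entirely, at the cost of invoking matrix-coefficient orthogonality rather than purely character-level tools. Your treatment of the independence of the extension, by contrast, is essentially the standard one: any two extensions differ by a linear character of $H/N$, and twisting by a linear character permutes $\mathrm{Irr}(H/N)$.
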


If $H = I_G(\psi)$, we can say even more.

\begin{lem}\label{lem2} Let $\psi \in \textup{Irr}(N)$ be a character that can be extended to a character $\widetilde{\psi}$ of $I_G(\psi)$. Then the set
\[
\{\textup{Ind}_{I_G(\psi)}^G(\widetilde{\psi} \cdot \textup{Inf}_{I_G(\psi)/N}^{I_G(\psi)}(\chi))\mid \chi \in \textup{Irr}(I_G(\psi)/N)\}
\]
 is a set of distinct irreducible characters of $G$. This set only depends on the orbit of $\psi$ under the conjugation action of $G$, and the sets of irreducible characters of $G$ associated to different conjugation orbits of irreducible characters of $N$ are distinct.
\end{lem}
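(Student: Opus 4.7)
The plan is to deduce the lemma from the Clifford correspondence together with Lemma~\ref{lem1}. Write $H = I_G(\psi)$ and, for each $\chi \in \textup{Irr}(H/N)$, set $\eta_\chi = \widetilde{\psi} \cdot \textup{Inf}_{H/N}^H(\chi)$. By Lemma~\ref{lem1}(3) the $\eta_\chi$ are pairwise distinct irreducible characters of $H$, and the set $\{\eta_\chi\}$ is independent of the choice of extension $\widetilde{\psi}$. Since $\textup{Inf}_{H/N}^H(\chi)$ restricts to $N$ as the constant $\chi(1)$, we have $\eta_\chi|_N = \chi(1)\psi$, so each $\eta_\chi$ lies above $\psi$ in the sense of Clifford theory.

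Next I would invoke the Clifford correspondence from \cite[Section 11]{MR1038525}: because $H$ is precisely the inertia group of $\psi$, induction $\eta \mapsto \textup{Ind}_H^G(\eta)$ is a bijection between irreducible characters of $H$ lying above $\psi$ and irreducible characters of $G$ lying above $\psi$, preserving both irreducibility and distinctness. Applying this to the characters $\eta_\chi$ immediately yields the first assertion, namely that the given set consists of pairwise distinct irreducible characters of $G$.

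For the orbit claim, suppose $\psi' = {}^g\psi$ for some $g \in G$. Then $I_G(\psi') = {}^gH$ and ${}^g\widetilde{\psi}$ is an extension of $\psi'$ to $I_G(\psi')$; the bijection $\chi \mapsto {}^g\chi$ between $\textup{Irr}(H/N)$ and $\textup{Irr}(I_G(\psi')/N)$ carries $\eta_\chi$ to ${}^g\eta_\chi$, and the standard identity $\textup{Ind}_{{}^gH}^G({}^g\eta_\chi) = \textup{Ind}_H^G(\eta_\chi)$ shows that the two sets of induced characters coincide. For disjointness across orbits, Clifford's theorem provides the converse: the restriction to $N$ of any irreducible character of $G$ lying above $\psi$ is a positive integer multiple of the sum of the $G$-conjugates of $\psi$, so the $G$-orbit of $\psi$ is recoverable from the induced character, and characters from distinct orbits cannot coincide. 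The only real obstacle is invoking the Clifford correspondence cleanly; everything else amounts to combining Lemma~\ref{lem1} with standard compatibility between induction and conjugation.
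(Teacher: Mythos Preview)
Your argument is correct and is essentially the same approach the paper takes: the paper does not give an independent proof of this lemma but simply records that it (together with Lemmas~\ref{lem1} and~\ref{extch}, Theorem~\ref{thmmlg}, and Corollary~\ref{cormlg}) follows from results in \cite[Section 11]{MR1038525}, i.e.\ from the Clifford correspondence. Your write-up just spells out that derivation explicitly.
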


If every irreducible character of $N$ can be extended to a character of $I_G(\psi)$, the above construction in fact produces all of the irreducible characters of $G$.

\begin{thm}[Method of Little Groups]\label{thmmlg}
    Let $G$ be a finite group, and let $N$ be a normal subgroup of $G$ such that each $\psi \in \textup{Irr}(N)$ can be extended to a character $\widetilde{\psi}$ of $I_G(\psi)$. Let $\mathcal{S}$ be a set of orbit representatives of $\textup{Irr}(N)$ under the conjugation action of $G$. Then the set of irreducible characters of $G$ is given by
\[
        \textup{Irr}(G) = \{\textup{Ind}_{I_G(\psi)}^G(\widetilde{\psi} \cdot \textup{Inf}_{I_G(\psi)/N}^{I_G(\psi)}(\chi)) \mid \psi \in \mathcal{S}, \chi \in \textup{Irr}(I_G(\psi)/N)\}.
\]
\end{thm}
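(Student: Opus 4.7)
The plan is to prove the two containments separately. For the direction $\supseteq$, everything I need is already packaged in Lemma~\ref{lem2}: it guarantees that each member of the right-hand set is an irreducible character of $G$, that distinct $\chi \in \textup{Irr}(I_G(\psi)/N)$ give distinct characters of $G$ for fixed $\psi$, and that the characters arising from different $G$-orbits of $\textup{Irr}(N)$ are pairwise distinct. So the right-hand side consists of distinct elements of $\textup{Irr}(G)$.

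The real content is $\subseteq$, and the natural tool is Clifford theory. Let $\rho \in \textup{Irr}(G)$. By Clifford's theorem \cite[Theorem 6.2]{MR1038525}, the restriction $\rho\big|_N$ is an integer multiple of the sum over a single $G$-orbit of irreducible characters of $N$; choose the unique representative $\psi \in \mathcal{S}$ lying in that orbit. By the Clifford correspondence \cite[Theorem 6.11]{MR1038525}, induction
\begin{equation*}
    \textup{Ind}_{I_G(\psi)}^G : \{\eta \in \textup{Irr}(I_G(\psi)) : \langle \eta|_N, \psi\rangle \neq 0\} \longrightarrow \{\rho \in \textup{Irr}(G) : \langle \rho|_N, \psi\rangle \neq 0\}
\end{equation*}
is a bijection, so there is a unique such $\eta$ with $\rho = \textup{Ind}_{I_G(\psi)}^G(\eta)$.

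To finish I need to identify $\eta$ with $\widetilde{\psi} \cdot \textup{Inf}_{I_G(\psi)/N}^{I_G(\psi)}(\chi)$ for some $\chi \in \textup{Irr}(I_G(\psi)/N)$. This is where the extension hypothesis is essential. Because $\psi$ extends to the irreducible $\widetilde{\psi}$ on $I_G(\psi)$, Gallagher's theorem \cite[Corollary 6.17]{MR1038525} asserts that every irreducible character of $I_G(\psi)$ lying above $\psi$ has the form $\widetilde{\psi}\cdot \textup{Inf}_{I_G(\psi)/N}^{I_G(\psi)}(\chi)$ for a unique $\chi \in \textup{Irr}(I_G(\psi)/N)$. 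Applying this to $\eta$ gives the desired expression for $\rho$.

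The step that does the real work is the invocation of Gallagher's theorem in the last paragraph; this is precisely the place where the extendibility assumption is consumed, converting the abstract Clifford correspondence (which always holds) into the completely explicit parametrization demanded by the theorem. Everything else is bookkeeping with Clifford's theorem and Lemma~\ref{lem2}.
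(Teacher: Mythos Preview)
Your proof is correct. The paper does not actually supply a proof of this theorem: it simply records that Theorem~\ref{thmmlg} (together with the surrounding lemmas) follows from results in \cite[Section 11]{MR1038525}. What you have written is precisely the standard way to unpack that citation in the extendible case, using Clifford's theorem, the Clifford correspondence, and Gallagher's theorem from Chapter~6 of the same reference; Section~11 develops the projective-representation machinery needed when extendibility fails, which is unnecessary here. So your argument is not a different route so much as an explicit spelling-out of the reference the paper points to.
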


The most common application of the method of little groups is to semidirect products. If $G = N \rtimes H$, we can identify $G/N$ with $H$. For a character $\psi \in \text{Irr}(N)$, let $I_H(\psi)$ denote the stabilizer of $\psi$ under the conjugation action of $H$. Observe that $I_H(\psi) \cong I_G(\psi)/N$. In this situation, linear characters of $N$ can always be extended to characters of $I_G(\psi)$.

\begin{lem}\label{extch} Let $G = N \rtimes H$, let $\psi$ be a linear character of $N$, and let $K \subseteq I_H(\psi)$. For $n \in N$ and $h \in K$, define $\widetilde{\psi}(nh) = \psi(n)$. Then $\widetilde{\psi}$ is an extension of $\psi$  to $NK$.
\end{lem}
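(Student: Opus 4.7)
The plan is to verify three separate things: (i) that $\widetilde{\psi}$ is a well-defined function on $NK$, (ii) that it restricts to $\psi$ on $N$, and (iii) that it is a homomorphism $NK \to \mathbb{C}^\times$ (equivalently, a linear character of $NK$).

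For (i), since $G = N \rtimes H$ we have $N \cap H = \{1\}$, so every element of $NK$ has a unique expression as $nh$ with $n \in N$ and $h \in K$; in particular $NK$ is a subgroup because $K$ normalizes $N$. For (ii), taking $h = 1 \in K$ gives $\widetilde{\psi}(n) = \psi(n)$ for all $n \in N$.

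The main step is (iii). Using the semidirect product multiplication,
\[
(n_1 h_1)(n_2 h_2) = n_1 \, ({}^{h_1}n_2) \, h_1 h_2,
\]
so
\[
\widetilde{\psi}\bigl((n_1 h_1)(n_2 h_2)\bigr) = \psi\bigl(n_1 \cdot {}^{h_1}n_2\bigr).
\]
Here is where linearity of $\psi$ is essential: because $\psi$ is a one-dimensional character, it is a homomorphism $N \to \mathbb{C}^\times$, so the right-hand side factors as $\psi(n_1)\,\psi({}^{h_1}n_2)$. Finally, the condition $h_1 \in K \subseteq I_H(\psi)$ means ${}^{h_1}\psi = \psi$ (unwinding the notation ${}^h\alpha(g) = \alpha(g^h)$ used in the paper), so $\psi({}^{h_1}n_2) = \psi(n_2)$. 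Combining,
\[
\widetilde{\psi}\bigl((n_1 h_1)(n_2 h_2)\bigr) = \psi(n_1)\psi(n_2) = \widetilde{\psi}(n_1 h_1)\,\widetilde{\psi}(n_2 h_2),
\]
as required.

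There is no real obstacle; the only thing to be careful about is the direction of the conjugation action and to note explicitly that \emph{linearity} of $\psi$ is what allows the factorization $\psi(n_1 \cdot {}^{h_1}n_2) = \psi(n_1)\psi({}^{h_1}n_2)$ (this step would fail for a general irreducible character of $N$, explaining why the hypothesis cannot be weakened).
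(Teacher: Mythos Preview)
Your proof is correct. The paper does not actually give its own proof of this lemma; it simply remarks that this result (along with the surrounding lemmas and theorem) follows from results in \cite[Section~11]{MR1038525}. Your direct verification that $\widetilde{\psi}$ is well defined and multiplicative is the standard argument and is more self-contained than the paper's citation.
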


A corollary of Lemma~\ref{extch} is that the method of little groups applies to semidirect products with abelian normal subgroups.

\begin{cor}\label{cormlg} Let $G = N \rtimes H$, with $N$ abelian. Let $\mathcal{S}$ be a set of orbit representatives of $\textup{Irr}(N)$ under the conjugation action of $G$. Then
\[
        \textup{Irr}(G) = \{\textup{Ind}_{I_G(\psi)}^G(\widetilde{\psi} \cdot \textup{Inf}_{I_G(\psi)/N}^{I_G(\psi)}(\chi)) \mid \psi \in \mathcal{S}, \chi \in \textup{Irr}(I_H(\psi))\}.
\]
\end{cor}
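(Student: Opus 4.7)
The plan is to deduce the corollary directly from Theorem~\ref{thmmlg}, using Lemma~\ref{extch} to verify its hypothesis. The only ingredient not already packaged in the two results is the identification of the inertial subgroup $I_G(\psi)$ inside the semidirect product, so that is where I would begin.

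First I would show that $I_G(\psi) = N \rtimes I_H(\psi)$, equivalently $I_G(\psi)/N \cong I_H(\psi)$ (this is the isomorphism asserted in the paragraph preceding the corollary). The argument is routine: write a general element of $G$ as $nh$ with $n \in N$ and $h \in H$; since $N$ is abelian, the conjugation action of $N$ on $\textup{Irr}(N)$ is trivial, so $nh$ stabilizes $\psi$ if and only if $h$ does. This simultaneously identifies $I_G(\psi)$ with $N \cdot I_H(\psi)$ and $I_G(\psi)/N$ with $I_H(\psi)$.

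With this identification in hand, I would invoke Lemma~\ref{extch} with $K = I_H(\psi)$: since $N$ is abelian, every $\psi \in \textup{Irr}(N)$ is linear, so the lemma produces an extension $\widetilde{\psi}$ of $\psi$ to $N \cdot I_H(\psi) = I_G(\psi)$. This verifies the hypothesis of Theorem~\ref{thmmlg}. Applying that theorem then yields
\[
\textup{Irr}(G) = \{\textup{Ind}_{I_G(\psi)}^G(\widetilde{\psi} \cdot \textup{Inf}_{I_G(\psi)/N}^{I_G(\psi)}(\chi)) \mid \psi \in \mathcal{S},\ \chi \in \textup{Irr}(I_G(\psi)/N)\},
\]
and substituting $I_H(\psi)$ for $I_G(\psi)/N$ in the indexing set gives the formula in the statement.

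I do not anticipate a genuine obstacle: the corollary is essentially a specialization of Theorem~\ref{thmmlg} to the case of abelian $N$, and every nontrivial step is a direct quotation of an earlier result. If any care is needed, it is only in the bookkeeping of the isomorphism $I_G(\psi)/N \cong I_H(\psi)$, which ultimately rests entirely on the abelian hypothesis on $N$.
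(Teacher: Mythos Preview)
Your proposal is correct and matches the paper's approach: the paper does not spell out a proof but presents the corollary as an immediate consequence of Lemma~\ref{extch} (applied with $K = I_H(\psi)$) together with Theorem~\ref{thmmlg}, after noting in the preceding paragraph that $I_G(\psi)/N \cong I_H(\psi)$. You have simply filled in the routine details the paper leaves implicit.
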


We will now establish some notation for the characters described above. Let $\psi \in \text{Irr}(N)$, $K \subseteq I_H(\psi)$, and $\widetilde{\psi}$ be the extension described in Lemma~\ref{extch}. For $\chi$ a character of $K$, define
\begin{equation}
        \psi \rtimes \chi = \textup{Ind}_{NK}^G(\widetilde{\psi} \cdot \textup{Inf}_{K}^{NK}(\chi)).
\end{equation}
The dependence on $K$ is implicit in that $\chi$ is a character of $K$.

\bigbreak

The remainder of this section will be devoted to establishing a number of properties of the characters $\psi \rtimes \chi$. It is worth mentioning that the definition of $\psi \rtimes \chi$, along with the following results, make sense for any class function $\chi$ of $K$ (and not just characters).

\begin{lem} Let $\psi$ be a linear character of $N$ and let $\chi$ be a character of $K$, with $K \subseteq I_H(\psi)$. If $n \in N$ and $h \in H$, then
\[
        (\psi \rtimes \chi)(nh) = \frac{1}{|K|} \sum_{\substack{k \in H \\ {}^k h \in K}} \psi({}^k n )\chi({}^k h).
\]
\end{lem}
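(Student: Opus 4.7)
The plan is to start from the standard induced character formula
\[
        (\psi\rtimes\chi)(g) = \textup{Ind}_{NK}^G\bigl(\widetilde\psi\cdot\textup{Inf}_K^{NK}(\chi)\bigr)(g) = \frac{1}{|NK|}\sum_{\substack{x\in G \\ {}^x g\,\in\, NK}}\widetilde\psi({}^x g)\,\textup{Inf}_K^{NK}(\chi)({}^x g),
\]
specialize to $g=nh$, and use the semidirect product decomposition to parametrize the summation.

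First I would write every $x\in G$ uniquely as $x=mk$ with $m\in N$, $k\in H$, and compute
\[
{}^x(nh) = m\cdot{}^k n\cdot\bigl({}^{({}^k h)}m^{-1}\bigr)\cdot{}^k h,
\]
where the first three factors lie in $N$ (using that $N$ is normal) and ${}^k h\in H$. Since $NK = N\cdot K$ with $N\cap K=1$, the element ${}^x(nh)$ lies in $NK$ if and only if ${}^k h\in K$; this both identifies the relevant summation set and exhibits the $N$-part and $K$-part of ${}^x(nh)$ explicitly.

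Next, I would evaluate the integrand on such an $x$. By the definition of $\widetilde\psi$ in Lemma~\ref{extch} and of inflation, the value is
\[
\psi\bigl(m\cdot{}^k n\cdot{}^{({}^k h)}m^{-1}\bigr)\cdot\chi({}^k h).
\]
The key cancellation now comes from two facts: $\psi$ is linear (so it splits across the product), and ${}^k h\in K\subseteq I_H(\psi)$ (so $\psi({}^{({}^k h)}m^{-1})=\psi(m^{-1})$). Together these force $\psi(m)\psi({}^{({}^k h)}m^{-1})=1$, reducing the integrand to $\psi({}^k n)\chi({}^k h)$, which is independent of $m$.

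Finally, performing the inner sum over $m\in N$ contributes a factor of $|N|$, and $|NK|=|N|\cdot|K|$ gives the claimed prefactor $1/|K|$, leaving
\[
(\psi\rtimes\chi)(nh) = \frac{1}{|K|}\sum_{\substack{k\in H \\ {}^k h\,\in\,K}}\psi({}^k n)\,\chi({}^k h).
\]
The only real subtlety is bookkeeping the conjugation ${}^x(nh)$ carefully enough to see that the $m$-dependence disappears precisely because ${}^k h$ stabilizes $\psi$; everything else is a routine application of the induced character formula.
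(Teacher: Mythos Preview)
Your proof is correct and follows essentially the same approach as the paper: both start from the induced character formula, parametrize $x\in G$ as $mk$ with $m\in N$ and $k\in H$, compute ${}^{mk}(nh)=m\cdot{}^k n\cdot{}^{({}^k h)}(m^{-1})\cdot{}^k h$, and then use linearity of $\psi$ together with ${}^k h\in K\subseteq I_H(\psi)$ to cancel the $m$-dependence. Your write-up in fact makes the role of the hypothesis $K\subseteq I_H(\psi)$ more explicit than the paper does.
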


\begin{proof} Let $n \in N$ and $h \in H$; then
\begin{align*}
        (\psi \rtimes \chi) (nh)
        & = \text{Ind}_{NK}^G(\widetilde{\psi} \cdot
        \text{Inf}_{K}^{NK}(\chi))(nh) \\
        & = \frac{1}{|N||K|} \sum_{\substack{m \in N,\: k \in H \\ {}^{mk}(nh) \in NK}} \psi(m({}^k n )({}^{khk^{-1}}(m^{-1})))\chi({}^k h) \\
        & = \frac{1}{|N||K|} \sum_{\substack{m \in N,\: k \in H \\ {}^k h \in K}} \psi(m)\psi({}^k n )\psi({}^{khk^{-1}}(m^{-1}))\chi({}^k h) \\
        & = \frac{1}{|K|} \sum_{\substack{k \in H \\ {}^k h \in K}} \psi({}^k n ) \chi({}^k h).
\end{align*}
\end{proof}

This formula leads to the following lemma.

\begin{lem}\label{indch} Let $K_1 \subseteq K_2 \subseteq I_H(\psi)$, and let $\chi$ be a character of $K_1$. Then
\[
        \psi \rtimes \textup{Ind}_{K_1}^{K_2}(\chi) = \psi \rtimes \chi.
\]
\end{lem}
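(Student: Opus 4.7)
The plan is to apply the explicit formula from the preceding lemma directly to both sides. Writing $\chi' = \textup{Ind}_{K_1}^{K_2}(\chi)$, which is a class function of $K_2$, the formula gives
\[
(\psi \rtimes \chi')(nh) = \frac{1}{|K_2|} \sum_{\substack{k \in H \\ {}^k h \in K_2}} \psi({}^k n)\,\chi'({}^k h).
\]
I would then expand $\chi'({}^k h)$ using the definition of induction as $\frac{1}{|K_1|}\sum_{k' \in K_2,\ {}^{k'k}h \in K_1}\chi({}^{k'k}h)$, which produces a double sum over $(k,k') \in H \times K_2$ subject to ${}^k h \in K_2$ and ${}^{k'k}h \in K_1$.

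The key simplification is that for any $k' \in K_2 \subseteq I_H(\psi)$, the stabilizer condition gives $\psi({}^{k'}m) = \psi(m)$ for all $m \in N$, so $\psi({}^k n) = \psi({}^{k'k} n)$. This lets me rewrite every summand purely in terms of $\ell := k'k$. I would then switch to summing over $(\ell, k')$ and count fibers: for each $\ell \in H$ and each $k' \in K_2$, the element $k = (k')^{-1}\ell$ automatically lies in $H$, and the constraint ${}^k h \in K_2$ reduces (by conjugating by $k'$) to ${}^\ell h \in K_2$, which is implied by ${}^\ell h \in K_1 \subseteq K_2$. Hence each $\ell$ with ${}^\ell h \in K_1$ is hit exactly $|K_2|$ times.

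After canceling the factor of $|K_2|$, one is left with
\[
(\psi \rtimes \chi')(nh) = \frac{1}{|K_1|} \sum_{\substack{\ell \in H \\ {}^\ell h \in K_1}} \psi({}^\ell n)\,\chi({}^\ell h),
\]
which by the preceding lemma is precisely $(\psi \rtimes \chi)(nh)$.

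The only real obstacle is the bookkeeping in the fiber count: one must verify both that the constraints on $(k,k')$ are equivalent to constraints on $\ell$ alone (given ${}^\ell h \in K_1$) and that $k'$ ranges freely over $K_2$. Everything else is a direct manipulation of the formula, and the stabilizer hypothesis $K_2 \subseteq I_H(\psi)$ is the crucial ingredient that makes the character-value $\psi({}^k n)$ insensitive to replacing $k$ by $k'k$.
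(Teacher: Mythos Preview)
Your proposal is correct and follows essentially the same approach as the paper: both expand the induced character inside the explicit formula, use $K_2 \subseteq I_H(\psi)$ to replace $\psi({}^k n)$ by $\psi({}^{k'k} n)$, and then reindex via $\ell = k'k$, observing that the constraint ${}^k h \in K_2$ becomes redundant once ${}^\ell h \in K_1$. Your fiber-counting description is exactly the substitution the paper performs, so there is no substantive difference.
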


\begin{proof} Let $n \in N$ and $h \in H$; then
\begin{align*}
        (\psi \rtimes \text{Ind}_{K_1}^{K_2}(\chi))(nh)
        & = \frac{1}{|K_2|} \sum_{\substack{k \in H \\ {}^k h \in K_2}} \psi({}^k n )\text{Ind}_{K_1}^{K_2}(\chi)({}^k h) \\
        & = \frac{1}{|K_2|} \sum_{\substack{k \in H \\ {}^k h \in K_2}} \psi({}^k n )\bigg(\frac{1}{|K_1|}\sum_{\substack{l \in K_2 \\ {}^{lk}h \in K_1}} \chi({}^{lk}h) \bigg).
\end{align*}
If $l \in K_2$ and ${}^{lk}h \in K_1$, then certainly ${}^k h \in K_2$. Furthermore, if $l \in K_2$, then $\psi({}^{lk}h) = \psi({}^k h)$. It follows that
\begin{align*}
        (\psi \rtimes \text{Ind}_{K_1}^{K_2}(\chi))(nh)
        & = \frac{1}{|K_2||K_1|} \sum_{\substack{k \in H \\ l \in K_2 \\ {}^{lk} h \in K_1}} \psi({}^k n )\chi({}^{lk}h) \\
        & = \frac{1}{|K_2||K_1|} \sum_{\substack{k \in H \\ l \in K_2 \\ {}^{lk} h \in K_1}} \psi({}^{lk} n )\chi({}^{lk}h) \\
        & = \frac{1}{|K_1|} \sum_{\substack{k \in H \\ {}^{k} h \in K_1}} \psi({}^{k} n )\chi({}^{k}h) \\
        & = (\psi \rtimes \chi)(nh).
\end{align*}
\end{proof}

For our purposes we will need to describe the products of characters in terms of the method of little groups. The following lemma is a general result about the products of induced characters.

\begin{lem}[{\cite[Theorem 10.18]{MR1038525}}]\label{prodind} Let $H$ and $K$ be subgroups of a finite group $G$, and let $\chi$ and $\psi$ be characters of $H$ and $K$, respectively. If $X$ denotes a set of $(H,K)$ double coset representatives of $G$, then
\[
        \textup{Ind}_H^G(\chi)\textup{Ind}_K^G(\psi)
        = \sum_{x \in X} \textup{Ind}_{H^x \cap K}^G (\textup{Res}_{H^x \cap K}^{H^x}(\chi^x)\textup{Res}_{H^x \cap K}^{K} (\psi)).
\]
\end{lem}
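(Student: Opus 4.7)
The plan is to derive this formula by combining two standard tools: Mackey's decomposition theorem for $\textup{Res}_K^G \circ \textup{Ind}_H^G$ and the projection (push–pull) formula $\alpha \cdot \textup{Ind}_K^G(\beta) = \textup{Ind}_K^G(\textup{Res}_K^G(\alpha)\cdot \beta)$, together with transitivity of induction. Mackey's formula is the substantive input; once it is available, the identity is essentially a formal manipulation.

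First I would apply the projection formula to rewrite
\[
\textup{Ind}_H^G(\chi)\,\textup{Ind}_K^G(\psi) = \textup{Ind}_K^G\bigl(\textup{Res}_K^G \textup{Ind}_H^G(\chi)\cdot \psi\bigr).
\]
Next I would invoke Mackey's formula with the double coset representatives $X$ to expand the restriction-of-induction inside:
\[
\textup{Res}_K^G \textup{Ind}_H^G(\chi) = \sum_{x \in X} \textup{Ind}_{H^x \cap K}^K\bigl(\textup{Res}_{H^x \cap K}^{H^x}(\chi^x)\bigr).
\]
Substituting this into the previous expression and distributing the pointwise product with $\psi$ across the sum gives
\[
\textup{Ind}_H^G(\chi)\,\textup{Ind}_K^G(\psi) = \sum_{x \in X} \textup{Ind}_K^G\Bigl(\textup{Ind}_{H^x \cap K}^K\bigl(\textup{Res}_{H^x \cap K}^{H^x}(\chi^x)\bigr)\cdot \psi\Bigr).
\]

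Then I would apply the projection formula once more to each summand, this time with respect to the subgroup $H^x \cap K \subseteq K$, to move the factor $\psi$ (replaced by its restriction $\textup{Res}_{H^x\cap K}^K(\psi)$) inside the inner induction:
\[
\textup{Ind}_{H^x \cap K}^K\bigl(\textup{Res}_{H^x \cap K}^{H^x}(\chi^x)\bigr)\cdot \psi = \textup{Ind}_{H^x \cap K}^K\bigl(\textup{Res}_{H^x \cap K}^{H^x}(\chi^x)\cdot \textup{Res}_{H^x\cap K}^K(\psi)\bigr).
\]
Finally, transitivity of induction $\textup{Ind}_K^G \circ \textup{Ind}_{H^x \cap K}^K = \textup{Ind}_{H^x \cap K}^G$ collapses the double induction and yields exactly the claimed formula. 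The only step I would expect to require care is bookkeeping: making sure the inner projection formula is invoked with the correct pair of subgroups and that the restrictions land in the intersection $H^x \cap K$ as written; there are no estimates or nonroutine calculations beyond this.
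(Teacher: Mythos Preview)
Your argument is correct: the projection (push--pull) formula, Mackey's decomposition for $\textup{Res}_K^G\circ\textup{Ind}_H^G$, a second application of the projection formula at the level $H^x\cap K\subseteq K$, and transitivity of induction combine exactly as you describe to give the stated identity. Note, however, that the paper does not prove this lemma at all; it is simply quoted from the literature (the citation is to Theorem~10.18 of the reference), so there is no ``paper's own proof'' to compare against. Your derivation is the standard textbook route to this identity and would be an appropriate proof to supply.
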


Let $G = N \rtimes H$ and let $\psi_1 \rtimes \chi_1$ and $\psi_2 \rtimes \chi_2$ be characters of $G$, where $\chi_1$ is a character of $K_1$ and $\chi_2$ is a character of $K_2$.

\begin{lem}\label{prodch} Let $X$ be a set of $(K_1,K_2)$ double coset representatives of $H$. Then
\[
        (\psi_1 \rtimes \chi_1)(\psi_2 \rtimes \chi_2) = \sum_{x \in X} (\psi_1^x\psi_2) \rtimes (\textup{Res}_{K_1^x \cap K_2}^{K_1^x}(\chi_1^x)\textup{Res}_{K_1^x \cap K_2}^{K_2}(\chi_2)).
\]
\end{lem}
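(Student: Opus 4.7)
The plan is to apply Lemma~\ref{prodind} to the presentations $\psi_i \rtimes \chi_i = \textup{Ind}_{NK_i}^G(\alpha_i)$, where $\alpha_i := \widetilde{\psi_i}\cdot\textup{Inf}_{K_i}^{NK_i}(\chi_i)$, and then recognize each summand on the right-hand side as a character of the form $(\psi_1^x\psi_2)\rtimes(\cdots)$.

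First I would identify the $(NK_1, NK_2)$ double coset representatives of $G$ with the $(K_1,K_2)$ double coset representatives of $H$. Because $N$ is normal and $G = NH$, every element of $G$ is of the form $nh$, and $NK_1 \cdot nh \cdot NK_2 = NK_1\cdot h \cdot NK_2$. This shows that the set $X$ in the statement is a valid set of $(NK_1,NK_2)$ double coset representatives. Next I would compute the intersection $(NK_1)^x \cap NK_2 = N(K_1^x \cap K_2)$: the containment $\supseteq$ is immediate, and $\subseteq$ follows from the uniqueness of the $N\times H$ decomposition in $G = N\rtimes H$, since if $n_1 k_1 = n_2 k_2$ with $n_i\in N$, $k_1\in K_1^x$, $k_2\in K_2$, then $n_1 = n_2$ and $k_1 = k_2 \in K_1^x \cap K_2$.

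The next step is to rewrite $\alpha_1^x$ and the restrictions. Using the explicit formula $\widetilde{\psi}(nh)=\psi(n)$ from Lemma~\ref{extch} together with the definition of inflation, a direct evaluation shows that, on any $nh \in NK_1^x$,
\[
        \alpha_1^x(nh) = \psi_1({}^x n)\,\chi_1({}^x h) = \widetilde{\psi_1^x}(nh)\cdot \textup{Inf}_{K_1^x}^{NK_1^x}(\chi_1^x)(nh),
\]
so $\alpha_1^x$ has exactly the form required to be used inside a $\rtimes$ construction. Restricting $\alpha_1^x$ and $\alpha_2$ to $N(K_1^x\cap K_2)$ and multiplying then produces, for $n\in N$ and $h\in K_1^x\cap K_2$, the value $(\psi_1^x\psi_2)(n)\cdot\chi_1^x(h)\chi_2(h)$, which is precisely
\[
        \widetilde{\psi_1^x\psi_2}\cdot\textup{Inf}_{K_1^x\cap K_2}^{N(K_1^x\cap K_2)}\bigl(\textup{Res}_{K_1^x\cap K_2}^{K_1^x}(\chi_1^x)\textup{Res}_{K_1^x\cap K_2}^{K_2}(\chi_2)\bigr).
\]
Before invoking this extension I must check its existence, i.e.\ that $K_1^x \cap K_2 \subseteq I_H(\psi_1^x\psi_2)$; this follows from $K_1^x \subseteq I_H(\psi_1)^x = I_H(\psi_1^x)$ and $K_2 \subseteq I_H(\psi_2)$.

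Inducing up to $G$ then identifies each summand in Lemma~\ref{prodind} with $(\psi_1^x\psi_2)\rtimes(\textup{Res}_{K_1^x\cap K_2}^{K_1^x}(\chi_1^x)\textup{Res}_{K_1^x\cap K_2}^{K_2}(\chi_2))$, yielding the desired identity. The main obstacle is really bookkeeping: all potential subtleties (that $(NK_1)^x\cap NK_2$ stays inside the semidirect product structure, that $\widetilde{\psi_1}^x$ agrees with $\widetilde{\psi_1^x}$, and that the product of the two extensions is the canonical extension of $\psi_1^x\psi_2$) reduce to the explicit formula of Lemma~\ref{extch} once one exploits the normality of $N$ and the uniqueness of the $N\rtimes H$ decomposition.
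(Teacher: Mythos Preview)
Your proposal is correct and follows essentially the same route as the paper: both apply Lemma~\ref{prodind} to the induced characters $\textup{Ind}_{NK_i}^G(\widetilde{\psi_i}\cdot\textup{Inf}(\chi_i))$, identify $X$ as a set of $(NK_1,NK_2)$ double coset representatives, and then simplify each summand to the form $(\psi_1^x\psi_2)\rtimes(\cdots)$. The only difference is presentational: the paper carries out the simplification by manipulating the operators $\textup{Res}$ and $\textup{Inf}$ formally (e.g.\ using $\textup{Res}\circ\textup{Inf}=\textup{Inf}\circ\textup{Res}$ on the appropriate subgroups), whereas you evaluate everything pointwise via the explicit formula $\widetilde{\psi}(nh)=\psi(n)$. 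Your write-up is in fact slightly more explicit than the paper's on two points it leaves tacit, namely the equality $(NK_1)^x\cap NK_2=N(K_1^x\cap K_2)$ and the inclusion $K_1^x\cap K_2\subseteq I_H(\psi_1^x\psi_2)$ needed to make sense of the extension $\widetilde{\psi_1^x\psi_2}$.
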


\begin{proof} Note that $X$ is also a set of $(NK_1,NK_2)$ double coset representatives of $G$. By Lemma~\ref{prodind},
\begin{align*}
        &(\psi_1 \rtimes \chi_1)(\psi_2 \rtimes \chi_2) \\
        &\hspace{.25in}= \text{Ind}_{NK_1}^G(\widetilde{\psi_1} \cdot
        \text{Inf}_{K_1}^{NK_1}(\chi_1))
        \text{Ind}_{NK_2}^G(\widetilde{\psi_2} \cdot
        \text{Inf}_{K_2}^{NK_2}(\chi_2)) \\
        &\hspace{.25in}= \sum_{x \in X}
        \text{Ind}_{(NK_1)^x\cap NK_2}^G(\textup{Res}_{(NK_1)^x\cap NK_2}^{(NK_1)^x}((\widetilde{\psi_1} \cdot
        \text{Inf}_{K_1}^{NK_1}(\chi_1))^x)\textup{Res}_{(NK_1)^x\cap NK_2}^{NK_2}(\widetilde{\psi_2} \cdot
        \text{Inf}_{K_2}^{NK_2}(\chi_2))).
\end{align*}
We can rewrite the inner portion of each term in the above sum as
\begin{align*}
        &\textup{Res}_{(NK_1)^x\cap NK_2}^{(NK_1)^x}((\widetilde{\psi_1} \cdot
        \text{Inf}_{K_1}^{NK_1}(\chi_1))^x)\textup{Res}_{(NK_1)^x\cap NK_2}^{NK_2}(\widetilde{\psi_2} \cdot
        \text{Inf}_{K_2}^{NK_2}(\chi_2)) \\
        &\hspace{1in} =(\textup{Res}_{(NK_1)^x\cap NK_2}^{(NK_1)^x}(\widetilde{\psi_1}^x)\textup{Res}_{(NK_1)^x\cap NK_2}^{NK_2}(\widetilde{\psi_2})) \\
        &\hspace{1in}\cdot
        (\textup{Res}_{(NK_1)^x\cap NK_2}^{(NK_1)^x}(\text{Inf}_{K_1}^{NK_1}(\chi_1))^x\textup{Res}_{(NK_1)^x\cap NK_2}^{NK_2}
        \text{Inf}_{K_2}^{NK_2}(\chi_2)).
\end{align*}
The first term can be simplified, as
\begin{align*}
\textup{Res}_{(NK_1)^x\cap NK_2}^{(NK_1)^x}(\widetilde{\psi_1}^x)\textup{Res}_{(NK_1)^x\cap NK_2}^{NK_2}(\widetilde{\psi_2})
& = \textup{Res}_{N(K_1^x\cap K_2)}^{N(K_1)^x}(\widetilde{\psi_1^x})\textup{Res}_{N(K_1^x\cap K_2)}^{NK_2}(\widetilde{\psi_2}) \\
& = \widetilde{\psi_1^x\psi_2},
\end{align*}
where $\widetilde{\psi_1^x\psi_2}$ denotes the extension of $\psi_1^x\psi_2$ to $N(K_1^x\cap K_2)$ defined in Lemma~\ref{extch}. Furthermore, the second term can be rewritten by noting that
\begin{align*}
\textup{Res}_{(NK_1)^x\cap NK_2}^{(NK_1)^x}(\text{Inf}_{K_1}^{NK_1} (\chi_1))^x
& = \textup{Res}_{N(K_1^x\cap K_2)}^{N(K_1)^x}\text{Inf}_{K_1^x}^{N(K_1)^x} (\chi_1^x) \\
& = \text{Inf}_{K_1^x\cap K_2}^{N(K_1^x\cap K_2)}\text{Res}_{K_1^x\cap K_2}^{K_1^x}(\chi_1^x),
\end{align*}
and similarly
\[
        \textup{Res}_{(NK_1)^x\cap NK_2}^{NK_2}\text{Inf}_{K_2}^{NK_2}(\chi_2)
        = \text{Inf}_{K_1^x\cap K_2}^{N(K_1^x\cap K_2)}\text{Res}_{K_1^x\cap K_2}^{K_2}(\chi_2).
\]
This means that the second term can be simplified to
\begin{align*}
        &\textup{Res}_{(NK_1)^x\cap NK_2}^{(NK_1)^x}(\text{Inf}_{K_1}^{NK_1}(\chi_1))^x\textup{Res}_{(NK_1)^x\cap NK_2}^{NK_2}
        \text{Inf}_{K_2}^{NK_2}(\chi_2) \\
        &\hspace{1in}= \text{Inf}_{K_1^x\cap K_2}^{N(K_1^x\cap K_2)}\text{Res}_{K_1^x\cap K_2}^{K_1^x}(\chi_1^x)
        \text{Inf}_{K_1^x\cap K_2}^{N(K_1^x\cap K_2)}\text{Res}_{K_1^x\cap K_2}^{K_2}(\chi_2) \\
         &\hspace{1in}=\text{Inf}_{K_1^x\cap K_2}^{N(K_1^x\cap K_2)}(\text{Res}_{K_1^x\cap K_2}^{K_1^x}(\chi_1^x)
        \text{Res}_{K_1^x\cap K_2}^{K_2}(\chi_2)).
\end{align*}
Substituting these simplifications into the initial equation, we get that
\begin{align*}
        &(\psi_1 \rtimes \chi_1)(\psi_2 \rtimes \chi_2) \\
         &\hspace{1in}= \sum_{x \in X}
        \text{Ind}_{N(K_1^x \cap K_2)}^G (\widetilde{\psi_1^x\psi_2} \cdot \text{Inf}_{K_1^x\cap K_2}^{N(K_1^x\cap K_2)}(\text{Res}_{K_1^x\cap K_2}^{K_1^x}(\chi_1^x)
        \text{Res}_{K_1^x\cap K_2}^{K_2}(\chi_2))) \\
        &\hspace{1in}= \sum_{x \in X} (\psi_1^x\psi_2) \rtimes (\textup{Res}_{K_1^x \cap K_2}^{K_1^x}(\chi_1^x)\textup{Res}_{K_1^x \cap K_2}^{K_2}(\chi_2)).
\end{align*}
\end{proof}

\section{Supercharacter theories of semidirect products}\label{sctsdp}
In this section we present the main result of the paper and relate this result to the star product.

\subsection{Supercharacter theories constructed by the method of little groups}
Let $G = N \rtimes H$ be a finite group with $N$ abelian, and let $\mathcal{L}$ be a lattice of subgroups of $H$ (under the usual meet and join operations of the subgroup lattice) such that
\begin{enumerate}
\item[(L1)] $H$ and $\{1\}$ are in $\mathcal{L}$,
\item[(L2)] if $K \in \mathcal{L}$ and $h \in H$, then ${}^hK \in \mathcal{L}$,
\item[(L3)] each $K \in \mathcal{L}$ is equipped with a supercharacter theory, and
\item[(L4)] these supercharacter theories are compatible in the sense that induction and restriction send superclass functions to superclass functions and conjugate subgroups have conjugate supercharacter theories (more specifically, if $\chi$ is a supercharacter of $K$ then $\chi^g$ is a supercharacter of $K^g$).
\end{enumerate}
\begin{remark} The compatibility property given by (L4) is not true of many known supercharacter theories of families of groups. For example, if $G$ is an algebra group and $H$ is an algebra subgroup, both with the algebra group supercharacter theory (see Section~\ref{typea}), it is not always the case that induction sends superclass functions to superclass functions (for a counterexample see \cite[Section 6]{MR2373317}). However, if $H$ is a left (or right, or two-sided) ideal subgroup of $G$ with the left (or right, or two-sided) ideal supercharacter theory, then induction and restriction will send superclass functions to superclass functions.
\end{remark}

Suppose that for each $\psi \in \text{Irr}(N)$ we can choose a subgroup $H_\psi \in \mathcal{L}$ such that our choices of $H_\psi$ satisfy the following properties.
\begin{enumerate}
\item[(H1)] $H_\psi$ is a normal subgroup of $I_H(\psi)$.
\item[(H2)] For all $h \in H$, we have $H_{(\psi^h)} = (H_\psi)^h$.
\item[(H3)] If $1_N$ denotes the trivial character of $N$, we have $H_{1_N}=H$.
\item[(H4)] If $\psi, \varphi \in \text{Irr}(N)$, then we have $H_\psi \cap H_\varphi \subseteq H_{\psi\varphi}$.
\end{enumerate}
These conditions may seem restrictive; however, for any lattice $\mathcal{L}$ satisfying (L1)--(L4), there are maximal and minimal choices of $H_\psi$.

\begin{lem} Let $G = N \rtimes H$ be a finite group with $N$ abelian. If $\mathcal{L}$ is any lattice of subgroups of $H$ satisfying (L1)--(L4), one can always choose
\begin{enumerate}
\item $H_{1_N}=H$ and $H_\psi = \{1\}$ for all nontrivial $\psi$, or
\item $H_\psi = I_\mathcal{L}(\psi)$ (where $I_\mathcal{L}(\psi)$ is the maximal element of $\mathcal{L}$ contained in $I_H(\psi)$).
\end{enumerate}
These choices of $H_\psi$ each satisfy (H1)--(H4), and are the minimal and maximal choices of $H_\psi$, respectively.
\end{lem}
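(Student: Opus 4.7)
The plan is to verify conditions (H1)--(H4) for each of the two proposed choices of $H_\psi$, then check the minimality/maximality claims.

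For choice (1), the verification is essentially immediate. The condition (H3) holds by definition. For (H1), observe that $H = I_H(1_N)$ since every element of $H$ fixes the trivial character, so $H_{1_N}=H$ is normal in $I_H(1_N)$, and the trivial subgroup is normal in any group. For (H2), since conjugation sends the trivial character to itself and non-trivial characters to non-trivial characters, both sides are either $H$ or $\{1\}$ in matching fashion. For (H4), the only non-trivial case to check is when exactly one of $\psi,\varphi$ equals $1_N$, where both sides reduce to the $H_\chi$ of the non-trivial factor; when both are non-trivial the left side is already $\{1\}$, so the inclusion is automatic.

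For choice (2), the main preliminary step is to show that $I_\mathcal{L}(\psi)$ is well-defined: the set of $K \in \mathcal{L}$ with $K \subseteq I_H(\psi)$ is closed under the join of $\mathcal{L}$ (since joins in the subgroup lattice are generated subgroups, and these stay inside the subgroup $I_H(\psi)$), and it is non-empty by (L1), so it has a unique maximal element. Then I would verify each axiom. For (H1), given $h \in I_H(\psi)$, property (L2) gives ${}^h I_\mathcal{L}(\psi) \in \mathcal{L}$, and conjugation by $h$ sends $I_H(\psi)$ to itself, so ${}^h I_\mathcal{L}(\psi)$ lies inside $I_H(\psi)$; by maximality ${}^h I_\mathcal{L}(\psi) \subseteq I_\mathcal{L}(\psi)$, and equality follows since conjugation is an automorphism. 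For (H2), the key observation is the identity $I_H(\psi^h) = (I_H(\psi))^h$, which follows directly from the definition of the inertial subgroup; combined with (L2) and maximality applied in both directions (conjugating back and forth by $h$), the equality $H_{\psi^h} = (H_\psi)^h$ follows. For (H3), note $I_H(1_N) = H$ and $H \in \mathcal{L}$ by (L1), so $I_\mathcal{L}(1_N) = H$. For (H4), if $k$ fixes both $\psi$ and $\varphi$ then it fixes $\psi\varphi$, so $I_H(\psi) \cap I_H(\varphi) \subseteq I_H(\psi\varphi)$; since $\mathcal{L}$ is closed under meet (intersection), $I_\mathcal{L}(\psi) \cap I_\mathcal{L}(\varphi) \in \mathcal{L}$ and sits inside $I_H(\psi\varphi)$, so maximality yields the desired inclusion.

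For the extremality claims, I would argue as follows. Any valid assignment must have $H_{1_N} = H$ by (H3), and for $\psi \ne 1_N$ we need only $H_\psi \in \mathcal{L}$; since $\{1\} \in \mathcal{L}$ by (L1) and sits inside every subgroup, choice (1) is pointwise the smallest assignment. Conversely, (H1) forces $H_\psi \subseteq I_H(\psi)$, and (L3) forces $H_\psi \in \mathcal{L}$, so by definition of $I_\mathcal{L}(\psi)$ as a maximum we have $H_\psi \subseteq I_\mathcal{L}(\psi)$; hence choice (2) is pointwise the largest.

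The main technical point is the well-definedness of $I_\mathcal{L}(\psi)$ and the conjugation-compatibility in (H2), both of which rely on using (L2) together with the fact that the subgroup-lattice meet and join respect containment in $I_H(\psi)$. Everything else is a direct application of the definitions.
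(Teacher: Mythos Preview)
Your proposal is correct and follows essentially the same approach as the paper's proof, just with considerably more detail spelled out (the paper dispatches choice (1) as ``trivial'' and handles choice (2) in three lines by appealing to closure under conjugation and the lattice property). One small slip: in your extremality argument you write that ``(L3) forces $H_\psi \in \mathcal{L}$,'' but (L3) concerns supercharacter theories; the membership $H_\psi \in \mathcal{L}$ is part of the standing hypothesis on the choice of $H_\psi$, not a consequence of (L3).
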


\begin{proof} The fact that these are the minimal and maximal choices of $H_\psi$ is clear, and choice (1) trivially satisfies (H1)--(H4).

\bigbreak

Let $H_\psi = I_\mathcal{L}(\psi)$ for all $\psi \in \text{Irr}(N)$ (note that such $I_\mathcal{L}(\psi)$ must exist as $\mathcal{L}$ is a lattice). As $\mathcal{L}$ is closed under conjugation, $I_\mathcal{L}(\psi)$ is normal in $I_H(\psi)$ and $I_\mathcal{L}(\psi)^h = I_\mathcal{L}(\psi^h)$ for all $h \in H$, hence we have (H1) and (H2). Condition (H3) is clear, and if $K_\psi \subseteq I_H(\psi)$ and $K_\varphi \subseteq I_H(\varphi)$, then $K_\psi \cap K_\varphi \subseteq I_H(\psi\varphi)$, hence we have (H4).
\end{proof}

\bigbreak

Given a choice of $H_\psi$, consider the set of characters
\[
        S_\psi = \{\text{Ind}_{H_\psi}^{I_H(\psi)}(\chi) \mid \chi \text{ is a supercharacter of } H_\psi\}.
\]
As $H_\psi$ is a normal subgroup of $I_H(\psi)$ and $H$-conjugates of supercharacters are also supercharacters, each irreducible character of $I_H(\psi)$ is a constituent of exactly one character in $S_\psi$ (although there may be supercharacters that induce to the same character of $I_H(\psi)$).

\bigbreak

\begin{thm}\label{sctmlg} Let $S$ be a set of orbit representatives of $\text{Irr}(N)$ under the conjugation action of $G$. Then the set of characters defined by
\[
        \textup{SCh}(\mathcal{L})=\{\psi \rtimes\chi \mid \psi \in S \text{ and } \chi \in S_\psi\}
\]
is a set of supercharacters for a supercharacter theory of $G$.
\end{thm}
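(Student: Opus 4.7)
The plan is to apply Lemma~\ref{ptprod}, which reduces the problem to three tasks: show the trivial character $1_G$ lies in $\textup{SCh}(\mathcal{L})$, show each irreducible character of $G$ is a constituent of exactly one element of $\textup{SCh}(\mathcal{L})$, and show the linear span of $\textup{SCh}(\mathcal{L})$ is closed under pointwise product.

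For the trivial character, I would choose $1_N$ as the orbit representative of its (singleton) orbit in $S$; then (H3) gives $H_{1_N}=H$, the trivial character $1_H$ is a supercharacter of $H$, and $\textup{Ind}_H^H(1_H)=1_H \in S_{1_N}$, so $1_N \rtimes 1_H = 1_G$ lies in $\textup{SCh}(\mathcal{L})$. For the decomposition into irreducibles, I would decompose $\chi \in S_\psi$ into its irreducible constituents in $\textup{Irr}(I_H(\psi))$, use linearity of $\psi \rtimes(-)$, and invoke Corollary~\ref{cormlg} to note that each $\psi \rtimes \chi''$ with $\chi'' \in \textup{Irr}(I_H(\psi))$ is irreducible. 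The irreducible constituents of $\psi \rtimes \chi$ are thus exactly the $\psi \rtimes \chi''$ for $\chi''$ an irreducible constituent of $\chi$; combining the orbit-uniqueness in Corollary~\ref{cormlg} with the remark preceding the theorem (where (H1) and (L4) ensure each $\chi'' \in \textup{Irr}(I_H(\psi))$ is a constituent of exactly one element of $S_\psi$) yields the required bijection.

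The main work is the closure under pointwise product. I would first use Lemma~\ref{indch} to replace each $\psi_i \rtimes \chi_i$ by $\psi_i \rtimes \widehat{\chi_i}$, where $\widehat{\chi_i}$ is the underlying supercharacter of $H_{\psi_i}$, and then invoke Lemma~\ref{prodch} to expand the product over a set of $(H_{\psi_1},H_{\psi_2})$-double coset representatives $x$. Each term has the form $(\psi_1^x\psi_2) \rtimes f_x$, where $f_x$ is a class function on $H_{\psi_1}^x \cap H_{\psi_2}$ built by conjugating, restricting, and multiplying $\widehat{\chi_1}$ and $\widehat{\chi_2}$. By (H2), $H_{\psi_1}^x = H_{\psi_1^x}$; by (L1) and the lattice structure, $H_{\psi_1^x} \cap H_{\psi_2}$ lies in $\mathcal{L}$; and by (L4), $\widehat{\chi_1}^x$ is a supercharacter of $H_{\psi_1^x}$, so the restrictions and their product give a superclass function of $H_{\psi_1^x} \cap H_{\psi_2}$. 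Hypothesis (H4) gives $H_{\psi_1^x} \cap H_{\psi_2} \subseteq H_{\psi_1^x \psi_2}$, so Lemma~\ref{indch}, applied to the class function $f_x$, lets us rewrite $(\psi_1^x\psi_2) \rtimes f_x = (\psi_1^x\psi_2) \rtimes \textup{Ind}_{H_{\psi_1^x}\cap H_{\psi_2}}^{H_{\psi_1^x\psi_2}}(f_x)$; (L4) again ensures the induced function is a superclass function on $H_{\psi_1^x\psi_2}$, hence a linear combination of its supercharacters. Finally, if $\psi' \in S$ is the orbit representative with $\psi_1^x\psi_2 = \psi'^h$, I would use the identity $\psi'^h \rtimes \alpha = \psi' \rtimes \alpha^{h^{-1}}$, which follows from a short change-of-variables in the explicit formula for $\psi \rtimes \chi$ given just after Corollary~\ref{cormlg}; hypothesis (H2) gives $H_{\psi_1^x\psi_2}^{h^{-1}} = H_{\psi'}$, and (L4) ensures $\alpha^{h^{-1}}$ remains a superclass function on $H_{\psi'}$. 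Writing this final superclass function as a linear combination of supercharacters of $H_{\psi'}$ places each term in the linear span of $\textup{SCh}(\mathcal{L})$.

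The main obstacle will be this last step: the closure under pointwise product rests on a delicate interlocking of hypotheses (H2), (H4), (L1), and (L4) together with the orbit-invariance identity $\psi^h \rtimes \chi = \psi \rtimes \chi^{h^{-1}}$. No single piece is hard in isolation, but they must be assembled in the right order so that the superclass-function structure survives conjugation, restriction, multiplication, induction, and orbit-reduction in sequence.
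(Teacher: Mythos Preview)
Your proposal is correct and follows essentially the same route as the paper: both reduce to Lemma~\ref{ptprod}, handle the trivial character and the partition of irreducibles via Corollary~\ref{cormlg}, and then prove pointwise-product closure by applying Lemma~\ref{prodch} at the level of $H_{\psi_i}$ and pushing the resulting superclass function on $H_{\psi_1^x}\cap H_{\psi_2}$ up to $H_{\psi_1^x\psi_2}$ via (H4), (L4), and Lemma~\ref{indch}. The only difference is that you spell out two steps the paper leaves implicit---the replacement of $\chi_i\in S_{\psi_i}$ by the underlying supercharacter $\widehat{\chi_i}$ of $H_{\psi_i}$, and the orbit-reduction identity $\psi^h\rtimes\alpha=\psi\rtimes\alpha^{h^{-1}}$---whereas the paper simply asserts the end result is ``a superclass function of $G$'' (tacitly using that $\textup{SCh}(\mathcal{L})$ is independent of the choice of $S$).
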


\begin{proof} As each irreducible character of $I_H(\psi)$ is a constituent of exactly one character in $S_\psi$, it follows from the method of little groups that every irreducible character of $G$ is a constituent of exactly one character in $\textup{SCh}(\mathcal{L})$. Furthermore, the trivial character is in $\textup{SCh}(\mathcal{L})$. It suffices to show that the linear span of the characters in $\textup{SCh}(\mathcal{L})$ is closed under the pointwise product, and the result will follow from Lemma~\ref{ptprod}.

\bigbreak

Suppose that $\psi$ is an irreducible character of $N$ and $K \in \mathcal{L}$ is a subgroup of $H_\psi$. If $\chi$ is a character of $K$ that is constant on the superclasses, then
\[
        \psi \rtimes \chi = \psi \rtimes \text{Ind}_K^{H_\psi}(\chi).
\]
By the assumptions on $\mathcal{L}$, $\text{Ind}_K^{H_\psi}(\chi)$ is a superclass function of $H_\psi$, hence $\psi \rtimes \chi$ is a superclass function of $G$.

\bigbreak

Let $\psi_1$ and $\psi_2$ be linear characters of $N$ and let $\chi_1 \in S_{\psi_1}$ and $\chi_2 \in S_{\psi_2}$. By Lemma~\ref{prodch}, if $X$ is a set of $(H_{\psi_1},H_{\psi_2})$ double coset representatives of $G$, then
\begin{align*}
        (\psi_1 \rtimes \chi_1)(\psi_2 \rtimes \chi_2)
        &= \sum_{x \in X} (\psi_1^x\psi_2) \rtimes (\textup{Res}_{H_{\psi_1}^x \cap H_{\psi_2}}^{H_{\psi_1}^x}(\chi_1^x) \textup{Res}_{H_{\psi_1}^x \cap H_{\psi_2}}^{H_{\psi_2}}(\chi_2)) \\
        &= \sum_{x \in X} (\psi_1^x\psi_2) \rtimes (\textup{Res}_{H_{\psi_1^x} \cap H_{\psi_2}}^{H_{\psi_1^x}}(\chi_1^x) \textup{Res}_{H_{\psi_1^x} \cap H_{\psi_2}}^{H_{\psi_2}}(\chi_2)).
\end{align*}
As $\chi_1^x$ is a superclass function of $H_{\psi_1^x}$ and $\chi_2$ is a superclass function of $H_{\psi_2}$,
\[
        \textup{Res}_{H_{\psi_1^x} \cap H_{\psi_2}}^{H_{\psi_1^x}}(\chi_1^x) \textup{Res}_{H_{\psi_1^x} \cap H_{\psi_2}}^{H_{\psi_2}}(\chi_2)
\]
is a superclass function of $H_{\psi_1^x} \cap H_{\psi_2}$. Furthermore, by the assumptions on the choices of $H_\psi$ we have that $H_{\psi_1^x} \cap H_{\psi_2} \subseteq H_{\psi_1^x\psi_2}$. Hence
\[
        (\psi_1^x\psi_2) \rtimes (\textup{Res}_{H_{\psi_1^x} \cap H_{\psi_2}}^{H_{\psi_1^x}}(\chi_1^x) \textup{Res}_{H_{\psi_1^x} \cap H_{\psi_2}}^{H_{\psi_2}}(\chi_2))
\]
is a superclass function of $G$, and the linear span of the elements of $\text{SCh}(\mathcal{L})$ is closed under the pointwise product.
\end{proof}

\begin{remark} In a sense, the choice of $\mathcal{L}$ does not matter; that is, if $\mathcal{L}'$ is a different lattice of subgroups that contains all the $H_\psi$, where each $H_\psi$ has the same supercharacter theory as it does in $\mathcal{L}$, then the resulting supercharacter theory of $G$ will not change.
\end{remark}

\subsection{Relationship to the star product}

Let $G = N \rtimes H$ with $N$ abelian. Let $\text{SCT}(N)=(\mathcal{X}_N,\mathcal{K}_N)$ be the supercharacter theory of $N$ obtained from the action of the inner automorphisms of $G$, and suppose $\text{SCT}(H)=(\mathcal{X}_H,\mathcal{K}_H)$ is any supercharacter theory of $H$. The $*$-product of these supercharacter theories has supercharacter set
\[
        \{\text{Inf}_H^G(\chi) \mid \chi \in \mathcal{X}_H\} \cup \{\text{Ind}_N^G(\chi) \mid \chi \in \mathcal{X}_N-\{1_N\}\},
\]
where $1_N$ is the trivial character of $N$. This set can be written as
\[
        \{1_N \rtimes \chi \mid \chi \in \mathcal{X}_H\} \cup \{\psi \rtimes 1_{\{1\}} \mid \psi \in S\}
\]
where $S$ is a set of orbit representatives of $N$ under the conjugation action of $G$ and $1_{\{1\}}$ is the trivial character of the trivial subgroup $\{1\} \subseteq H$.

\bigbreak

Suppose that $G = N \rtimes H$ with $N$ abelian and that $\mathcal{L}$ is a sublattice of the subgroup lattice of $H$ satisfying (L1)--(L4). Given choices of $H_\psi$ for each $\psi \in \text{Irr}(N)$, we can consider two supercharacter theories of $G$. The method of little groups produces $\textup{SCh}(\mathcal{L})$; the $*$-product allows us to construct $\textup{SCT}(N)*\textup{SCT}(H)$, where $\textup{SCT}(N)$ is the supercharacter theory of $N$ obtained from the action of the inner automorphisms of $G$ and $\textup{SCT}(H)$ is the supercharacter theory of $H$ (as an element of $\mathcal{L}$).

\begin{lem} The supercharacter theory $\textup{SCh}(\mathcal{L})$ is a (not necessarily strictly) finer supercharacter theory of $G$ than $\textup{SCT}(N)*\textup{SCT}(H)$. These supercharacter theories coincide exactly when $H_\psi = \{1\}$ for all nontrivial $\psi \in \text{Irr}(N)$.
\end{lem}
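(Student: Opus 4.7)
My plan is to compare the two supercharacter sets directly and show that every supercharacter of $\textup{SCT}(N)*\textup{SCT}(H)$ lies in the linear span of the supercharacters of $\textup{SCh}(\mathcal{L})$; together with Lemma~\ref{ptprod}, this gives the refinement of superclass partitions. Using Lemma~\ref{indch}, I first reindex the supercharacters of $\textup{SCh}(\mathcal{L})$ as $\{\psi \rtimes \chi_0 \mid \psi \in S,\ \chi_0 \text{ a supercharacter of } H_\psi\}$. The trivial-orbit contribution is immediate: $H_{1_N}=H$ forces these to be $\{1_N \rtimes \chi_0 \mid \chi_0 \in \mathcal{X}_H\}$, matching the inflation half of the $*$-product on the nose.

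For a nontrivial orbit representative $\psi$, I apply induction in stages together with Lemma~\ref{extch} to compute
\[
        \psi \rtimes 1_{\{1\}} = \textup{Ind}_N^G(\psi) = \textup{Ind}_{NH_\psi}^G\bigl(\widetilde{\psi}\cdot \textup{Inf}_{H_\psi}^{NH_\psi}(\textup{reg}_{H_\psi})\bigr),
\]
where the second equality uses the projection formula $\textup{Ind}_N^{NH_\psi}(\psi) = \widetilde{\psi} \cdot \textup{Ind}_N^{NH_\psi}(1_N) = \widetilde{\psi} \cdot \textup{Inf}(\textup{reg}_{H_\psi})$. Since each supercharacter $\sigma$ of $H_\psi$ is a positive scalar multiple of the primitive convolution idempotent $\sum_{\chi \in X_\sigma} \chi(1)\chi$, the regular character $\textup{reg}_{H_\psi} = \sum_\chi \chi(1)\chi$ admits a positive rational decomposition $\textup{reg}_{H_\psi} = \sum_\sigma c_\sigma\, \sigma$ over the supercharacters of $H_\psi$. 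Substituting into the display yields $\psi \rtimes 1_{\{1\}} = \sum_\sigma c_\sigma (\psi \rtimes \sigma)$, realizing each $*$-product supercharacter as a non-negative combination of $\textup{SCh}(\mathcal{L})$-supercharacters; this completes the refinement argument.

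For the equality claim, when $H_\psi = \{1\}$ for every nontrivial $\psi$ the above sum collapses to a single term, and the two supercharacter sets coincide. Conversely, suppose $H_\psi \neq \{1\}$ for some nontrivial $\psi$. Then $H_\psi$ carries at least two distinct supercharacters, namely $1_{H_\psi}$ and some $\sigma \neq 1_{H_\psi}$. By Clifford theory applied to $H_\psi \trianglelefteq I_H(\psi)$, any $\tau \in \textup{Irr}(I_H(\psi))$ whose restriction to $H_\psi$ contains $1_{H_\psi}$ must be trivial on $H_\psi$ (as $\tau|_{H_\psi}$ is a multiple of a single $I_H(\psi)$-orbit of irreducibles of $H_\psi$ and $1_{H_\psi}$ is fixed), so $\textup{Ind}_{H_\psi}^{I_H(\psi)}(1_{H_\psi})$ and $\textup{Ind}_{H_\psi}^{I_H(\psi)}(\sigma)$ have disjoint sets of irreducible constituents. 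Consequently $\psi \rtimes 1_{H_\psi}$ and $\psi \rtimes \sigma$ are distinct supercharacters of $\textup{SCh}(\mathcal{L})$ whose constituents both lie in the single $*$-product block indexed by the orbit of $\psi$, so the refinement is strict. The main technical obstacle is the positive decomposition of $\textup{reg}_{H_\psi}$, which rests on the identification of supercharacters with positive scalar multiples of sums of $\textup{CF}(H_\psi)$-convolution primitive idempotents.
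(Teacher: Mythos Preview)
Your proof is correct and follows essentially the same approach as the paper. Both arguments identify the $*$-product supercharacter $\psi \rtimes 1_{\{1\}}$ with $\psi \rtimes \rho$ for $\rho$ the regular character of $H_\psi$ (the paper invokes Lemma~\ref{indch} directly, while you unwind this via induction in stages and the projection formula), then observe that $\rho$ lies in the span of the supercharacters of $H_\psi$. The one place you add genuine content is the converse of the equality claim: the paper dispatches this in a single sentence (``follows from the definition of the $*$-product''), whereas you supply a Clifford-theoretic argument showing that when $H_\psi \neq \{1\}$ the characters $\psi \rtimes 1_{H_\psi}$ and $\psi \rtimes \sigma$ are honestly distinct supercharacters of $\textup{SCh}(\mathcal{L})$. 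Your argument there is sound; the key observation that $\{1\}$ is always a superclass (so $1_{H_\psi}$ is always a supercharacter and there is at least one other when $|H_\psi|>1$) is standard, and your Frobenius-reciprocity step correctly separates the induced characters.
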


\begin{proof} The characters $\{1_N \rtimes \chi \mid \chi \in \mathcal{X}_H\}$ are all supercharacters of $\textup{SCh}(\mathcal{L})$ as $H_{1_N} = H$. Furthermore, if $\psi$ is a nontrivial linear character of $N$ and $\rho$ denotes the regular character of $H_\psi$, then
\[
        \psi \rtimes 1_{\{1\}} = \psi \rtimes \text{Ind}_N^{NH_\psi}(1_{\{1\}}) = \psi \rtimes \rho.
\]
As $\rho$ is a superclass function of $H_\psi$, $\psi \rtimes 1_{\{1\}}$ is a superclass function of $G$. The final claim follows from the definition of the $*$-product.
\end{proof}

\section{Supercharacter theories of pattern groups}\label{typea}

Let $\frak{g}$ be a nilpotent algebra over $\mathbb{F}_q$. Define the \emph{algebra group} associated to $\frak{g}$ to be
\[
        G = \{1+x \mid x \in \frak{g}\},
\]
with multiplication defined by $(1+x)(1+y) = 1+(x+y+xy)$. We will often write $G = 1+\frak{g}$ to indicate that $G$ is the algebra group associated to $\frak{g}$. For example, if $\frak{ut}_n(\mathbb{F}_q)$ denotes the algebra of strictly upper-triangular matrices with entries in $\mathbb{F}_q$, then $UT_n(\mathbb{F}_q)$ is the algebra group associated to $\frak{ut}_n(\mathbb{F}_q)$.

\bigbreak

Given a poset $\mathcal{P}$ on $[n]$ that is a subset of the usual linear order, we define the \emph{pattern group} associated to $\mathcal{P}$ by
\[
        U_\mathcal{P} = \{g \in UT_n(\mathbb{F}_q) \mid g_{ij} \neq 0 \text{ implies } i \preceq_\mathcal{P} j\}.
\]
We similarly define the \emph{pattern subalgebra}
\[
        \frak{u}_\mathcal{P} = \{x \in \frak{ut}_n(\mathbb{F}_q) \mid x_{ij} \neq 0 \text{ implies } i \prec_\mathcal{P} j\};
\]
note that $U_\mathcal{P}$ is the algebra group associated to $\frak{u}_\mathcal{P}$. For more details on algebra groups and pattern groups, see \cite{MR2373317,MR2491890,MR1358482}.

\bigbreak

Let $G =1+ \frak{g}$ be an algebra group; in \cite{MR2373317}, Diaconis--Isaacs construct a supercharacter theory of $G$, which we now present. The group $G$ acts on $\frak{g}$ by left and right multiplication. Consider the bijection
\begin{align*}
        f:G &\to \frak{g}\\
        1+x &\mapsto x,
\end{align*}
and for $g \in G$, let
\[
        K_g = \{h \in G \mid f(h) \in Gf(g)G\}.
\]
The actions of $G$ on $\frak{g}$ by left and right multiplication induce actions of $G$ on the dual space $\frak{g}^*$; if $\lambda \in \frak{g}^*$, $x \in \frak{g}$ and $g,h \in G$, let
\[
        (g\lambda h) (x) = \lambda(g^{-1}xh^{-1}).
\]
Let $\theta : \mathbb{F}_q^+ \to \mathbb{C}^\times$ be a nontrivial homomorphism. For $\lambda \in \frak{g}^*$, define
\[
        \chi_\lambda = \frac{|G\lambda|}{|G\lambda G|}\sum_{\mu \in G\lambda G} \theta \circ f \circ \mu.
\]

\begin{thm}[Diaconis--Isaacs, \cite{MR2373317}]\label{alggpsct} The functions $\chi_\lambda$ are characters of $G$, and
\[
        (\{K_g \mid g \in G\},\{\chi_\lambda \mid \lambda \in \frak{g}^*\})
\]
is a supercharacter theory of $G$.
\end{thm}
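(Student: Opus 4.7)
The plan is to verify the three axioms of Definition~\ref{defsct}, with the bulk of the work devoted to showing each $\chi_\lambda$ is an honest character of $G$. The superclass side comes for free: the $K_g = f^{-1}(Gf(g)G)$ are preimages under the bijection $f$ of the two-sided $G \times G$ orbits in $\frak{g}$, and therefore partition $G$ into unions of ordinary conjugacy classes (inner conjugation sits inside the two-sided action).

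The central step is showing $\chi_\lambda$ is a character. Write $\theta_\mu(1+x) := \theta(\mu(x))$, so $\chi_\lambda = (|G\lambda|/|G\lambda G|)\sum_{\mu \in G\lambda G}\theta_\mu$. I would first prove a ``left-orbit lemma'': for each $\lambda$, the one-sided sum $\sigma_\lambda := \sum_{\mu \in G\lambda}\theta_\mu$ is a character of $G$. Pick a left ideal $\frak{h} \subseteq \frak{g}$ maximal with $\lambda(\frak{g}\frak{h}) = 0$; then $H := 1+\frak{h}$ is an algebra subgroup of $G$, and the identity
\[
\theta_\lambda((1+x)(1+y)) = \theta_\lambda(1+x)\theta_\lambda(1+y)\theta(\lambda(xy))
\]
combined with $\lambda(\frak{g}\frak{h}) = 0$ makes $\theta_\lambda|_H$ a linear character of $H$; maximality of $\frak{h}$ forces $\text{Stab}_G^L(\lambda) = H$, and the induction formula, via the bijection $gH \leftrightarrow g\lambda$, yields $\text{Ind}_H^G(\theta_\lambda|_H) = \sigma_\lambda$. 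Next, decompose $G\lambda G = \bigsqcup_{i=1}^r G\mu_i$ into left orbits, all of common size $|G\lambda|$ (right multiplication by $h$ bijects $G\mu$ with $G(\mu h)$), so $r = |G\lambda G|/|G\lambda|$ and $\chi_\lambda = \tfrac{1}{r}\sum_{i=1}^r \sigma_{\mu_i}$. A direct computation of $\langle \chi_\lambda, \chi\rangle$ for each irreducible $\chi$ of $G$, using the additive-character orthogonality $\sum_{x \in \frak{g}}\theta((\mu-\mu')(x)) = |\frak{g}|\cdot[\mu = \mu']$ (which expresses the $\theta_\mu$'s as an orthogonal family on $\frak{g}$), pins down $\langle\chi_\lambda,\chi\rangle$ as a nonnegative integer, so $\chi_\lambda$ is an honest character.

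Axiom (SCT2) is direct: if $1+x' = g_1(1+x)g_2$ lies in the superclass of $1+x$, the substitution $\mu \mapsto g_1^{-1}\mu g_2^{-1}$ permutes $G\lambda G$, so $\chi_\lambda(1+x') = \chi_\lambda(1+x)$. For (SCT1), both superclasses and supercharacters are indexed by two-sided $G\times G$ orbits---on $\frak{g}$ and on $\frak{g}^*$ respectively---which are equinumerous by Brauer's permutation lemma applied to the dual $\mathbb{F}_q$-linear actions of $G \times G$ (since $|\text{Fix}_V(g)| = |\text{Fix}_{V^*}(g)|$ for any linear operator on a finite-dimensional $\mathbb{F}_q$-space). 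For (SCT3), the additive-character orthogonality immediately gives $\langle \chi_\lambda, \chi_{\lambda'} \rangle = 0$ whenever $\lambda, \lambda'$ lie in distinct orbits, forcing each irreducible to appear in at most one $\chi_\lambda$; since the $\chi_\lambda$'s are linearly independent and equal in number to the superclass count, they span all superclass functions, and hence each irreducible appears in at least one $\chi_\lambda$ as well. The main obstacle is proving $\chi_\lambda$ is a character, specifically the inner-product computation that establishes the integer-multiplicity condition; everything else is a combinatorial or orthogonality check.
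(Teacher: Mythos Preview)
The paper does not prove this theorem; it is quoted from Diaconis--Isaacs \cite{MR2373317} and used as input for the rest of Section~\ref{typea}. So there is no ``paper's own proof'' to compare against, and your proposal must stand on its own.

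Unfortunately it does not: your ``left-orbit lemma'' is false. The one-sided sum $\sigma_\lambda = \sum_{\mu \in G\lambda}\theta_\mu$ is not even a class function in general, so it cannot be a character. Concretely, take $G = UT_3(\mathbb{F}_q)$ and $\lambda = e_{13}^* \in \mathfrak{ut}_3^*$. Then $G\lambda = \{e_{13}^* + t e_{23}^* : t \in \mathbb{F}_q\}$, and one computes $\sigma_\lambda(1+x) = q\,\theta(x_{13})$ when $x_{23}=0$ and $0$ otherwise. Conjugating $1+e_{12}$ by $1+e_{23}$ gives $1+e_{12}-e_{13}$, and $\sigma_\lambda$ takes the values $q$ and $q\,\theta(-1)$ on these two conjugate elements. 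The underlying reason is that the left orbit $G\lambda$ is not closed under the conjugation action $\mu \mapsto g\mu g^{-1}$, since that requires closure under the right action as well. Your identification of $H=1+\frak{h}$ with the left stabilizer is also off (the left ideal $\frak{h}=\{y:\lambda(\frak{g}y)=0\}$ yields the \emph{right} stabilizer), but that is secondary.

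The Diaconis--Isaacs argument bypasses one-sided orbit sums entirely: they show directly that $\chi_\lambda$ itself equals $\mathrm{Ind}_{1+\frak{r}_\lambda}^G(\theta_\lambda|_{1+\frak{r}_\lambda})$, where $\frak{r}_\lambda = \{x \in \frak{g} : \lambda(x\frak{g})=0\}$. The point is that induction is a conjugation construction, and the conjugation-closure of the support of the induced character naturally sweeps out the full two-sided orbit $G\lambda G$ rather than a single left orbit; the normalization $|G\lambda|/|G\lambda G|$ then drops out of the index $[G:1+\frak{r}_\lambda]=|G\lambda|$. Once $\chi_\lambda$ is an induced character, (SCT1)--(SCT3) follow essentially as you outlined.
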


\bigbreak

If $G = 1+\frak{g}$ is an algebra group, we will call a subgroup $H = 1+\frak{h}$ of $G$ a \emph{left} (respectively \emph{right}) \emph{ideal subgroup} if $\frak{h}$ a left (respectively right) ideal of $\frak{g}$. If $H = 1+\frak{h}$ is a left or right ideal subgroup of $G$, there is a coarser supercharacter theory of $H$ described below.

\begin{prop}\label{lemlissct} Let $\theta:\mathbb{F}_q^+ \to \mathbb{C}^\times$ be a nontrivial homomorphism.
\begin{enumerate}
\item If $H$ is a left ideal subgroup of $G$, there is a supercharacter theory of $H$ with superclasses defined by
\[
        K_g = \{h \in H \mid f(h) \in Gf(g)H\}
\]
and supercharacters given by
\[
        \chi_\lambda =\frac{|G\lambda|}{|G \lambda H|}\sum_{\mu \in G \lambda H} \theta \circ \mu \circ f.
\]
\item If $H$ is a right ideal subgroup of $G$, there is a supercharacter theory of $H$ with superclasses defined by
\[
        K_g = \{h \in H \mid f(h) \in Hf(g)G\}
\]
and supercharacters given by
\[
        \chi_\lambda =\frac{|\lambda G|}{|H \lambda G|}\sum_{\mu \in H \lambda G} \theta \circ \mu \circ f.
\]
\end{enumerate}
\end{prop}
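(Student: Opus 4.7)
The approach is to view the supercharacter theory in part~(1) as a coarsening of the Diaconis--Isaacs supercharacter theory of $H$ (Theorem~\ref{alggpsct} applied to the algebra group $H = 1+\mathfrak{h}$ in its own right) obtained from the additional left action of $G$. Since $\mathfrak{h}$ is a left ideal of $\mathfrak{g}$, left multiplication makes $G$ act on $\mathfrak{h}$, and dually on $\mathfrak{h}^*$; this action commutes with the right action of $H$ by multiplication, so every $G \times H$ orbit on $\mathfrak{h}$ (respectively on $\mathfrak{h}^*$) is a disjoint union of $H \times H$ orbits.

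First I would check the superclass axiom. The proposed $K_g$ are by construction the $f$-preimages of the $G \times H$ orbits on $\mathfrak{h}$, so they partition $H$, and each one is a union of Diaconis--Isaacs superclasses of $H$; since those are unions of conjugacy classes by Theorem~\ref{alggpsct}, the $K_g$ are as well. Next, decomposing $G\lambda H = \bigsqcup_i H\mu_i H$ and substituting into the definition of $\chi_\lambda$ yields
\[
\chi_\lambda = \sum_i \frac{|G\lambda| \cdot |H\mu_i H|}{|G\lambda H| \cdot |H\mu_i|}\, \chi^{\mathrm{DI}}_{\mu_i},
\]
where $\chi^{\mathrm{DI}}_{\mu_i}$ denotes the Diaconis--Isaacs supercharacter of $H$ indexed by $\mu_i$. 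The main obstacle will be showing that each coefficient is a positive integer, so that $\chi_\lambda$ is a genuine character. Here I would use that left multiplication by $g \in G$ is a bijection on $\mathfrak{h}^*$, so the left $G$-stabilizer of $\lambda h$ coincides with $G_\lambda$ for every $h \in H$; consequently every left $G$-orbit inside $G\lambda H$ has the common size $|G\lambda|$, giving $|G\lambda H|/|G\lambda| = [H : N]$ for the subgroup $N = \{h \in H : G\lambda h = G\lambda\}$. The analogous identity for the $H \times H$ action gives $|H\mu_i H|/|H\mu_i| = [H : N_i]$ for $N_i = \{h \in H : H\mu_i h = H\mu_i\}$. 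A short direct check shows that, up to $H$-conjugation, $N_i$ embeds as a subgroup of $N$, so the coefficient simplifies to the index $[N : N_i]$, which is a positive integer.

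Once the $\chi_\lambda$ are known to be characters, the remaining axioms follow easily. The trivial character is $\chi_0$; each $\chi_\lambda$ is constant on the $K_g$ because $\sum_{\mu \in G\lambda H} \theta(\mu(f(h)))$ depends on $h \in H$ only through the $G \times H$ orbit of $f(h)$; and each irreducible character of $H$ is a constituent of exactly one Diaconis--Isaacs supercharacter, hence of exactly one $\chi_\lambda$. Finally, $|\{K_g\}| = |\{\chi_\lambda\}|$ because both equal the number of $G \times H$ orbits, on $\mathfrak{h}$ and on $\mathfrak{h}^*$ respectively, and these agree by standard duality for a finite $\mathbb{F}_q$-vector space. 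Part~(2) for right ideal subgroups follows by the identical argument with the roles of the left and right actions interchanged.
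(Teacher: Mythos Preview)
The paper states this proposition without proof; it is presented as a known variant of the Diaconis--Isaacs construction (Theorem~\ref{alggpsct}), so there is no argument in the text to compare yours against.  Your approach---realising the proposed theory as a coarsening of the algebra-group supercharacter theory of $H$ via the additional left $G$-action on $\mathfrak{h}$ and $\mathfrak{h}^*$---is the natural one and is correct.

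The only step that deserves slightly more than ``a short direct check'' is the integrality of the coefficients.  What actually makes it work is this: for $\mu_i\in G\lambda H$ write $M_i=\{h\in H: G\mu_i h=G\mu_i\}$.  If $h\in N_i$ then $\mu_i h\in H\mu_i\subseteq G\mu_i$, so $N_i\subseteq M_i$; and if $\mu_i=g_i\lambda h_i$ then $G\mu_i=G\lambda h_i$, which gives $M_i=h_i^{-1}Nh_i$.  Hence the coefficient
\[
\frac{|G\lambda|\,|H\mu_iH|}{|G\lambda H|\,|H\mu_i|}=\frac{[H:N_i]}{[H:N]}=\frac{|M_i|}{|N_i|}=[M_i:N_i]
\]
is a positive integer, so $\chi_\lambda$ is a genuine character.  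With this detail made explicit your argument is complete; the equality of the number of $G\times H$ orbits on $\mathfrak{h}$ and on $\mathfrak{h}^*$ follows, as you say, from the fact that a linear action of a finite group on a finite $\mathbb{F}_q$-vector space and its contragredient have the same permutation character.
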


In order to reproduce Theorem~\ref{alggpsct} using the method of little groups, we will first show that these subgroups satisfy (L1)--(L4). Lemmas~\ref{lemlissublattice}, \ref{lemlisindres}, and  \ref{lemlisconj} are stated in terms of left ideal subgroups but are also true of the set of right ideal subgroups.

\begin{lem}\label{lemlissublattice} The set of left ideal subgroups of $G$ is a sublattice of the subgroup lattice of $G$.
\end{lem}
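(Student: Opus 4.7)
The plan is to use the bijection $H = 1+\mathfrak{h} \leftrightarrow \mathfrak{h}$ between left ideal subgroups of $G$ and left ideals of $\mathfrak{g}$, and to show that this bijection takes the lattice-theoretic meet and join in the subgroup lattice of $G$ to the intersection and sum of left ideals (which are themselves left ideals of $\mathfrak{g}$). Given two left ideal subgroups $H_1 = 1+\mathfrak{h}_1$ and $H_2 = 1+\mathfrak{h}_2$, I will verify the two claims
\[
H_1 \cap H_2 = 1 + (\mathfrak{h}_1 \cap \mathfrak{h}_2)
\qquad\text{and}\qquad
H_1 \vee H_2 = 1 + (\mathfrak{h}_1 + \mathfrak{h}_2).
\]

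The meet is essentially immediate: since $f:G\to\mathfrak{g}$ is a bijection, $1+x \in H_1\cap H_2$ if and only if $x\in\mathfrak{h}_1\cap\mathfrak{h}_2$, and the intersection of two left ideals of $\mathfrak{g}$ is obviously a left ideal.

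For the join, I first observe that $\mathfrak{h}_1+\mathfrak{h}_2$ is a left ideal of $\mathfrak{g}$, so $1+(\mathfrak{h}_1+\mathfrak{h}_2)$ is an algebra group, hence a genuine subgroup of $G$ containing both $H_1$ and $H_2$; this gives the inclusion $\langle H_1,H_2\rangle \subseteq 1+(\mathfrak{h}_1+\mathfrak{h}_2)$. The reverse inclusion is the heart of the proof: given an arbitrary element $1+(x+y)$ with $x\in\mathfrak{h}_1$ and $y\in\mathfrak{h}_2$, I want to factor it as a product of elements of $H_1$ and $H_2$. Solving $(1+x)(1+y')=1+(x+y)$ forces $y' = (1+x)^{-1}y$. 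Since $\mathfrak{g}$ is nilpotent, $(1+x)^{-1} = 1 - x + x^2 - \cdots$ is a finite sum, so $y' = y - xy + x^2 y - \cdots$; because $\mathfrak{h}_2$ is a \emph{left} ideal of $\mathfrak{g}$ and $y\in\mathfrak{h}_2$, each term $x^k y$ lies in $\mathfrak{h}_2$, so $y' \in \mathfrak{h}_2$ and $1+y'\in H_2$. Thus $1+(x+y) = (1+x)(1+y') \in H_1 H_2 \subseteq \langle H_1,H_2\rangle$, which gives the opposite inclusion.

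I expect this last step to be the main obstacle, since it is the only place where the left ideal hypothesis (as opposed to mere subalgebra closure) is genuinely used, and it relies on the nilpotency of $\mathfrak{g}$ to make $(1+x)^{-1}$ a polynomial in $x$. The analogous statement for right ideal subgroups mentioned in the remark before the lemma follows by the symmetric factorization $1+(x+y) = (1+x')(1+y)$ with $x' = x(1+y)^{-1}$.
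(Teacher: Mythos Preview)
Your proof is correct and follows essentially the same approach as the paper's: the paper also reduces to showing $\langle 1+\mathfrak{a},\,1+\mathfrak{b}\rangle = 1+(\mathfrak{a}+\mathfrak{b})$, with the nontrivial inclusion established by the identical factorization $1+(a+b) = (1+a)(1+b-ab+a^2b-\cdots)$ and the observation that $b-ab+a^2b-\cdots\in\mathfrak{b}$ because $\mathfrak{b}$ is a left ideal. Your write-up is slightly more explicit about the meet and about why $1+(\mathfrak{h}_1+\mathfrak{h}_2)$ is a subgroup, but the substance is the same.
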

\begin{proof} The only thing to check is that the join of left ideal subgroups is a left ideal subgroup. Let $\frak{a}$ and $\frak{b}$ be left ideals of $\frak{g}$; then certainly $\langle(1+\frak{a}),(1+\frak{b})\rangle \subseteq 1+(\frak{a}+\frak{b})$. Let $a \in \frak{a}$ and $b \in \frak{b}$; then
\[
        1+(a+b) = (1+a)(1+b-ab+a^2b-a^3b+\hdots),
\]
and $b-ab+a^2b-\hdots \in \frak{b}$. Thus the reverse containment holds, and the result follows.
\end{proof}

\begin{lem}\label{lemlisindres} Suppose $A\subseteq B$ are left ideal subgroups of $G$ and that $\alpha$ is a superclass function of $A$ and $\beta$ is a superclass function of $B$. Then
\begin{enumerate}
\item $\textup{Ind}_A^B(\alpha)$ is a superclass function of $B$, and
\item $\textup{Res}_A^B(\beta)$ is a superclass function of $A$.
\end{enumerate}
\end{lem}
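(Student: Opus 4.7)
The plan is to dispatch restriction directly from the description of superclasses via $f$, and then to attack induction in two steps: first reducing the required $G \times B$-invariance to plain left $G$-invariance, and then verifying left $G$-invariance by a short change of variables.

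For part (2), I would observe that when $g \in A$, the $A$-superclass $\{h \in A \mid f(h) \in G f(g) A\}$ is contained in the $B$-superclass $\{h \in B \mid f(h) \in G f(g) B\}$, because $A \subseteq B$ implies $G f(g) A \subseteq G f(g) B$. Hence any superclass function of $B$, when restricted to $A$, is constant on each $A$-superclass.

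For part (1), the first step is the following reduction: a class function $\phi$ of $B$ is a superclass function if and only if $\phi(1+x) = \phi(1+gx)$ for all $g \in G$ and $x \in \mathfrak{b}$. The nontrivial direction combines $B$-conjugation invariance (automatic for class functions) with left $G$-invariance to recover right $B$-invariance: for $h \in B$ we have $\phi(1+xh) = \phi(1+h^{-1}xh) = \phi(1+x)$, where the first equality uses left $G$-invariance with the choice $g = h^{-1} \in G$ and the second is $B$-conjugation. Applied to $\phi = \textup{Ind}_A^B(\alpha)$, which is automatically a class function of $B$, this shows that it suffices to verify its left $G$-invariance. For this, given $g \in G$ and $x_0 \in \mathfrak{b}$, I would expand $\textup{Ind}_A^B(\alpha)(1 + gx_0)$ via the induction formula and substitute $g' = u^{-1}gu$, $y = u^{-1}x_0u$ inside each term. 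Since $u \in B \subseteq G$ conjugates $G$ into itself, $g' \in G$, and $u^{-1}(gx_0)u = g'y$. Because $\mathfrak{a}$ is a left ideal and $g' \in G$ satisfies $g'\mathfrak{a} = \mathfrak{a}$, the membership condition $g'y \in \mathfrak{a}$ is equivalent to $y \in \mathfrak{a}$; moreover $\alpha(1+g'y) = \alpha(1+y)$ by the left $G$-invariance of $\alpha$. The sum defining $\textup{Ind}_A^B(\alpha)(1+gx_0)$ therefore coincides term by term with the sum defining $\textup{Ind}_A^B(\alpha)(1+x_0)$, yielding the desired invariance.

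The main obstacle I anticipate is the reduction from full $G \times B$-invariance to plain left $G$-invariance: attacking right $B$-invariance of $\textup{Ind}_A^B(\alpha)$ head-on would be painful because the right $B$-action on $\mathfrak{b}$ via $f$ does not correspond to any clean group-theoretic operation in $B$. Once this reduction is in hand, the calculation rests on two structural facts built into the hypotheses: $B \subseteq G$ (so that $u^{-1}gu$ stays in $G$) and the left-ideal property of $\mathfrak{a}$ (so that $g'\mathfrak{a} = \mathfrak{a}$).
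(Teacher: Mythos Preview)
Your proof is correct and rests on the same core observation as the paper's: because $\mathfrak{a}$ is a left ideal, left multiplication by $G$ preserves both membership in $\mathfrak{a}$ and the $A$-superclass, so the induction sum is unchanged. The organizational difference is that you first isolate a reduction lemma (for a class function on $B$, left $G$-invariance alone forces full superclass invariance, since $\phi(1+xh) = \phi(1+h^{-1}xh) = \phi(1+x)$), and then verify only left $G$-invariance of $\textup{Ind}_A^B(\alpha)$. The paper instead treats both factors of the $G\times B$-action simultaneously: given $b=1+y$ and $b'=1+gyh$, it substitutes $k\mapsto kh$ in the induction sum and computes ${}^{kh}b' = 1+{}^k(hg)\,{}^ky$, so that ${}^{kh}b'$ and ${}^kb$ differ by a left $G$-factor. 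Your decomposition makes the role of conjugation-invariance explicit and cleanly separates the two ingredients; the paper's argument is a one-shot change of variables that absorbs the right $B$-factor into the summation index. Both routes are short, and part (2) is handled identically.
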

\begin{proof} Let $b \in B$, with $b = 1+y$. Suppose $b'\in B$ is in the same superclass as $b$, and write $b' = 1+gyh$, where $g \in G$ and $h \in B$. If $k \in B$, then we have
\[
        {}^{kh}b' = 1+{}^{kh}(gyh) = 1+{}^k(hg){}^k y.
\]
It follows that ${}^k b \in A$ if and only if ${}^{kh}b' \in A$, in which case ${}^k b$ and ${}^{kh}b'$ are in the same superclass of $A$. Thus we have
\begin{align*}
        \text{Ind}_A^B(\alpha)(b)
         & = \frac{1}{|A|}\sum_{\substack{x \in B \\ {}^xb \in A}} \alpha({}^x b) \\
         & = \frac{1}{|A|}\sum_{\substack{x \in B \\ {}^{xh} b' \in A}} \alpha({}^{xh} b') \\
         & = \text{Ind}_A^B(\alpha)(b'),
\end{align*}
and $\text{Ind}_A^B(\alpha)$ is a superclass function of $B$. For (2), observe that if two elements of $A$ are in the same superclass in $A$ then they are in the same superclass in $B$.
\end{proof}
\begin{lem}\label{lemlisconj} Suppose that $H$ is a left ideal subgroup of $G$ and $\chi_\lambda$ is a supercharacter of $G$. Then for $g \in G$, we have $(\chi_\lambda)^g = \chi_{\lambda^g}$. In particular, $(\chi_\lambda)^g$ is a supercharacter of $H^g$.
\end{lem}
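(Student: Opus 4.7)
The plan is to match $(\chi_\lambda)^g$ and $\chi_{\lambda^g}$ directly as functions on $H^g$ by tracking how conjugation by $g$ intertwines the $G \times H$-action on $\frak{h}^*$. (I read the statement as asserting that $\chi_\lambda$ is a supercharacter of the left ideal subgroup $H$ from Proposition~\ref{lemlissct}(1), since that is the version consistent with property (L4) and the conclusion about $H^g$.)

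First I would observe that $H^g$ is itself a left ideal subgroup of $G$: since $g^{\pm 1} \in G \subseteq 1+\frak{g}$, left and right multiplication by $g^{\pm 1}$ preserve $\frak{g}$, so $g^{-1}\frak{h}g$ is again a left ideal of $\frak{g}$ and $H^g = 1 + g^{-1}\frak{h}g$. Under the action notation of Section~\ref{mlg}, the functional $\lambda^g = g^{-1}\lambda g$ is the element of $(\frak{h}^g)^*$ given by $\lambda^g(z) = \lambda(gzg^{-1})$. Hence $\chi_{\lambda^g}$ is well-defined as a supercharacter of $H^g$ via Proposition~\ref{lemlissct}(1), which disposes of the second sentence of the lemma once the first is proved.

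For the main identity, pick $x \in H^g$ and set $y = gxg^{-1} \in H$. The relation $y - 1 = g(x-1)g^{-1}$ gives $f(y) = gf(x)g^{-1}$, and so
\[
(\chi_\lambda)^g(x) = \chi_\lambda(y) = \frac{|G\lambda|}{|G\lambda H|}\sum_{\mu \in G\lambda H}\theta(\mu(f(y))).
\]
The key step is to recognize the map $\mu \mapsto g^{-1}\mu g$ as a bijection from $G\lambda H$ onto $G\lambda^g H^g$. This is immediate from the identity
\[
g^{-1}(a\lambda b)g = (g^{-1}ag)(g^{-1}\lambda g)(g^{-1}bg)
\]
for $a \in G$ and $b \in H$, together with $g^{-1}Gg = G$ and $g^{-1}Hg = H^g$; the same map restricts to a bijection $G\lambda \to G\lambda^g$, so the normalizing constants in the two supercharacter formulas agree. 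Finally, writing $\nu = g^{-1}\mu g$ gives $\nu(f(x)) = \mu(gf(x)g^{-1}) = \mu(f(y))$, so reindexing the sum turns the expression above into $\chi_{\lambda^g}(x)$.

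The only real obstacle is keeping the three layers of notation consistent—group elements, functionals in $\frak{h}^*$, and the $G \times H$-action—but once the correspondence $\mu \leftrightarrow g^{-1}\mu g$ is written down, every verification is a one-line manipulation.
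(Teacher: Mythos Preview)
Your proof is correct and follows essentially the same route as the paper: both arguments track how conjugation by $g$ carries the orbit $G\lambda H$ bijectively onto $G\lambda^g H^g$ and verify the normalizing constants match. The only cosmetic difference is that the paper first rewrites the orbit sum as a redundant sum over all of $G\times H$ (absorbing the orbit size into the scalar), which lets the reindexing $h\mapsto h^g$, $k\mapsto k^g$ happen without explicitly naming the bijection you state; your version makes that bijection explicit instead, which is equally valid.
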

\begin{proof}
Note that
\begin{align*}
        \chi_\lambda
        & = \frac{|G\lambda|}{|G \lambda H|}\sum_{\mu \in G \lambda H} \theta \circ \mu \circ f \\
        & = \frac{|G\lambda|}{|G||H|}\sum_{\substack{ h \in G \\ k \in H}} \theta \circ h \lambda k \circ f,
\end{align*}
and hence we have
\begin{align*}
        \chi_\lambda^g
        & = \frac{|G\lambda|}{|G||H|}\sum_{\substack{ h \in G \\ k \in H}} \theta \circ g^{-1}h \lambda kg \circ f \\
        & = \frac{|G\lambda^g|}{|G||H|}\sum_{\substack{ h \in G \\ k \in H}} \theta \circ h^g \lambda^g k^g \circ f \\
        & = \chi_{\lambda^g}.
\end{align*}
\end{proof}

Let $G = 1+\frak{g}$ be a pattern subgroup of $UT_n(\mathbb{F}_q)$. In order to reconstruct the algebra group supercharacter theory of $G$ using Theorem~\ref{sctmlg}, we need to decompose $G$ as a semidirect product $N \rtimes H$ with $N$ abelian. Let $n = k+m$, and define
\begin{align*}
        H_k &= \{g \in G \mid g_{ij} = 0 \text{ if } j>k \text{ and } j>i\}, \\
        H_m &= \{g \in G \mid g_{ij} = 0 \text{ if } i\leq k \text{ and } i<j\}, \text{ and} \\
        N &= \{g \in G \mid g_{ij} = 0 \text{ if } i<j \text{ and } j \leq k \text{ or } i>k\}.
\end{align*}
\begin{example} Let $n=7$, $k=3$, and $m=4$, and $G = UT_7(\mathbb{F}_q)$; we have
\begin{align*}
        H_k & = \left\{\left(\begin{array}{lll|llll}
        1 & * & * & 0 & 0 & 0 & 0 \\
        0 & 1 & * & 0 & 0 & 0 & 0 \\
        0 & 0 & 1 & 0 & 0 & 0 & 0 \\ \hline
        0 & 0 & 0 & 1 & 0 & 0 & 0 \\
        0 & 0 & 0 & 0 & 1 & 0 & 0 \\
        0 & 0 & 0 & 0 & 0 & 1 & 0 \\
        0 & 0 & 0 & 0 & 0 & 0 & 1
        \end{array}\right) \in UT_7(\mathbb{F}_q)\right\},\\
        H_m & = \left\{\left(\begin{array}{lll|llll}
        1 & 0 & 0 & 0 & 0 & 0 & 0 \\
        0 & 1 & 0 & 0 & 0 & 0 & 0 \\
        0 & 0 & 1 & 0 & 0 & 0 & 0 \\ \hline
        0 & 0 & 0 & 1 & * & * & * \\
        0 & 0 & 0 & 0 & 1 & * & * \\
        0 & 0 & 0 & 0 & 0 & 1 & * \\
        0 & 0 & 0 & 0 & 0 & 0 & 1
        \end{array}\right) \in UT_7(\mathbb{F}_q)\right\},\text{ and} \\
        N & = \left\{\left(\begin{array}{lll|llll}
        1 & 0 & 0 & * & * & * & * \\
        0 & 1 & 0 & * & * & * & * \\
        0 & 0 & 1 & * & * & * & * \\ \hline
        0 & 0 & 0 & 1 & 0 & 0 & 0 \\
        0 & 0 & 0 & 0 & 1 & 0 & 0 \\
        0 & 0 & 0 & 0 & 0 & 1 & 0 \\
        0 & 0 & 0 & 0 & 0 & 0 & 1
        \end{array}\right) \in UT_7(\mathbb{F}_q)\right\}.
\end{align*}
\end{example}

Observe that
\begin{enumerate}
\item $H_k$ is isomorphic to a pattern subgroup of $UT_{k}(\mathbb{F}_q)$ and $H_m$ is isomorphic to a pattern subgroup of $UT_{m}(\mathbb{F}_q)$;
\item $N$ is abelian; and
\item if $H = H_k \times H_m$, then $G = N \rtimes H$.
\end{enumerate}

Let $\mathcal{L}$ be the lattice of subgroups of $H_k \times H_m$ of the form $K_k \times K_m$, with $K_k$ a right ideal subgroup of $H_k$ and $K_m$ a left ideal subgroup of $H_m$. Equip $K_k \times K_m$ with the direct product of the right ideal supercharacter theory of $K_k$ and the left ideal supercharacter theory of $K_m$.

\bigbreak

We will denote by $\frak{h},\frak{h}_k,\frak{h_m}$ and $\frak{n}$ the algebras $f(H),f(H_k)$, $f(H_m)$, and $f(N)$. We have that
\begin{enumerate}
\item the irreducible characters of $N$ are exactly the elements of the form $\psi_\lambda =\theta \circ \lambda \circ f$, where $\lambda \in \frak{n}^*$; and
\item for $h = (h_k,h_m) \in H$ and $x \in \frak{n}$, ${}^h(1+x) = 1 +h_k x h_m^{-1}$.
\end{enumerate}
For each $\psi_\lambda \in \text{Irr}(N)$, define $H_{\psi_\lambda} = I_\mathcal{L}(\psi_\lambda)$.

\begin{example} Once again let $n=7$, $k=3$, and $m=4$, and let $G = UT_7(\mathbb{F}_q)$. Define $\lambda \in \frak{n}^*$ by
\[
        \lambda(x) = x_{15}+x_{37}.
\]
In other words, $\lambda(x)$ is determined by the entries denoted by $\circ$ below:
\[
        \left(\begin{array}{lll|llll}
        1 & 0 & 0 & * & \circ & * & * \\
        0 & 1 & 0 & * & * & * & * \\
        0 & 0 & 1 & * & * & * & \circ \\ \hline
        0 & 0 & 0 & 1 & 0 & 0 & 0 \\
        0 & 0 & 0 & 0 & 1 & 0 & 0 \\
        0 & 0 & 0 & 0 & 0 & 1 & 0 \\
        0 & 0 & 0 & 0 & 0 & 0 & 1
        \end{array}\right).
\]
For this choice of $\lambda$, $H_{\psi_\lambda}$ will be the subgroup
\[
H_{\psi_\lambda}  = \left\{\left(\begin{array}{lll|llll}
        1 & 0 & 0 & 0 & \circ & 0 & 0 \\
        0 & 1 & * & 0 & 0 & 0 & 0 \\
        0 & 0 & 1 & 0 & 0 & 0 & \circ \\ \hline
        0 & 0 & 0 & 1 & 0 & * & 0 \\
        0 & 0 & 0 & 0 & 1 & * & 0 \\
        0 & 0 & 0 & 0 & 0 & 1 & 0 \\
        0 & 0 & 0 & 0 & 0 & 0 & 1
        \end{array}\right) \in H_k \times H_m\right\},
\]
where once again the entries labeled with $\circ$ are the entries which determine $\lambda$. For comparison, note that $I_H(\psi_\lambda)$ is the subgroup
\[
I_H(\psi_\lambda)  = \left\{\left(\begin{array}{lll|llll}
        1 & 0 & a & 0 & \circ & 0 & 0 \\
        0 & 1 & * & 0 & 0 & 0 & 0 \\
        0 & 0 & 1 & 0 & 0 & 0 & \circ \\ \hline
        0 & 0 & 0 & 1 & 0 & * & 0 \\
        0 & 0 & 0 & 0 & 1 & * & a \\
        0 & 0 & 0 & 0 & 0 & 1 & 0 \\
        0 & 0 & 0 & 0 & 0 & 0 & 1
        \end{array}\right) \in H_k \times H_m\right\}
\]
where $a$ ranges over the elements of $\mathbb{F}_q$. Note that $I_H(\psi_\lambda)$ is an algebra group but not a pattern group.
\end{example}

\begin{thm}\label{thmmlgtypea} For each $\psi_\lambda \in \text{Irr}(N)$, define $H_{\psi_\lambda} = I_\mathcal{L}(\psi_\lambda)$. These choices of $H_{\psi_\lambda}$ produce the same supercharacter theory of $G$ as described in Theorem~\ref{alggpsct}.
\end{thm}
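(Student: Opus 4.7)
The plan is to exhibit a bijection between the Diaconis--Isaacs supercharacters $\{\chi_\lambda : \lambda \in \mathfrak{g}^*\}$ and the MLG supercharacters $\{\psi \rtimes \chi\}$ produced by Theorem \ref{sctmlg}. Since both sets are linear bases of subalgebras of $CF(G)$ under the pointwise product and both contain the trivial character, Lemma \ref{ptprod} reduces equality of the two supercharacter theories to equality (up to positive scalar) of the two sets of supercharacters. The vector space decomposition $\mathfrak{g} = \mathfrak{n} \oplus \mathfrak{h}_k \oplus \mathfrak{h}_m$ induces a dual decomposition $\mathfrak{g}^* = \mathfrak{n}^* \oplus \mathfrak{h}_k^* \oplus \mathfrak{h}_m^*$, so each $\lambda \in \mathfrak{g}^*$ has a unique expression $\lambda = \lambda_N + \lambda_k + \lambda_m$. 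I would show that $\chi_\lambda$ is a positive scalar multiple of $\psi_{\lambda_N} \rtimes (\chi_{\lambda_k} \times \chi_{\lambda_m})$, where $\chi_{\lambda_k}$ is the right-ideal supercharacter of a right-ideal subgroup $K_k \subseteq H_k$ attached to $\lambda_k$ by Proposition \ref{lemlissct}, and $\chi_{\lambda_m}$ is the analogous left-ideal supercharacter on the $H_m$ side.

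The first concrete step is to compute $H_{\psi_{\lambda_N}} = I_\mathcal{L}(\psi_{\lambda_N})$ explicitly. Since $(h_k,h_m) \in H$ acts on $1+x \in N$ by $(h_k,h_m)(1+x)(h_k,h_m)^{-1} = 1 + h_k x h_m^{-1}$, the inertial subgroup consists of pairs with $\lambda_N(h_k^{-1} x h_m) = \lambda_N(x)$ for all $x \in \mathfrak{n}$. I would verify that the largest element of $\mathcal{L}$ sitting inside this inertial subgroup is $(1+\mathfrak{a}) \times (1+\mathfrak{b})$, where $\mathfrak{a}$ is the largest right ideal of $\mathfrak{h}_k$ with $\lambda_N(\mathfrak{a} \mathfrak{n}) = 0$ and $\mathfrak{b}$ is the largest left ideal of $\mathfrak{h}_m$ with $\lambda_N(\mathfrak{n} \mathfrak{b}) = 0$. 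Applying the formula for $\psi \rtimes \chi$ from Section \ref{mlg}, together with Lemma \ref{indch}, I would expand $(\psi_{\lambda_N} \rtimes (\chi_{\lambda_k} \times \chi_{\lambda_m}))(g)$ at a general element $g = 1 + x + y_k + y_m$ as an average of terms $\theta(\lambda_N({}^{(k_1,k_2)} x)) \cdot \chi_{\lambda_k}({}^{k_1} y_k) \cdot \chi_{\lambda_m}({}^{k_2} y_m)$ indexed by $(k_1,k_2) \in H$. In parallel, Theorem \ref{alggpsct} gives $\chi_\lambda(g)$ as $\frac{|G\lambda|}{|G\lambda G|}$ times $\sum_{\mu \in G\lambda G} \theta(\mu(x+y_k+y_m))$.

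The heart of the argument is matching these two sums. The geometric claim is that $G\lambda G \subseteq \mathfrak{g}^*$ decomposes compatibly with $\mathfrak{g}^* = \mathfrak{n}^* \oplus \mathfrak{h}_k^* \oplus \mathfrak{h}_m^*$: since $N$ is abelian and normal, the $G \times G$-action on $\mathfrak{n}^*$ factors through $H \times H$ and projects $G\lambda G$ onto the $H$-orbit of $\lambda_N$; the projections to $\mathfrak{h}_k^*$ and $\mathfrak{h}_m^*$ are the double orbits $H_k \lambda_k K_k$ and $K_m \lambda_m H_m$ underlying the right- and left-ideal supercharacters. The main obstacle lies in the cross-term analysis: off-diagonal left and right multiplication in $G = NH$ can mix $\mathfrak{n}^*$-components with the $\mathfrak{h}_k^*$ and $\mathfrak{h}_m^*$ components, and one must show that these mixings are exactly the interactions produced by the induction from $NH_{\psi_{\lambda_N}}$ to $G$ in the MLG formula. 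Once this decomposition is established and the counting identity $|G\lambda G| = |H\lambda_N| \cdot |H_k \lambda_k K_k| \cdot |K_m \lambda_m H_m|$ is verified so that the normalizing constants agree, the bijection between the two supercharacter sets follows and the two supercharacter theories coincide.
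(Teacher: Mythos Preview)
Your setup matches the paper's: you correctly decompose $\mathfrak{g}^*$, you correctly identify $I_\mathcal{L}(\psi_{\lambda_N})$ as $(1+\mathfrak{a})\times(1+\mathfrak{b})$ with $\mathfrak{a}=\{y\in\mathfrak{h}_k:\lambda_N(y\mathfrak{n})=0\}$ and $\mathfrak{b}=\{y\in\mathfrak{h}_m:\lambda_N(\mathfrak{n}y)=0\}$ (these are exactly the paper's $K_k,K_m$), and you correctly observe that matching the two supercharacter sets up to positive scalar suffices. The divergence from the paper, and the gap in your proposal, is in what you call ``the heart of the argument.''

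Your plan to analyze $G\lambda G$ via its projections to $\mathfrak{n}^*$, $\mathfrak{h}_k^*$, $\mathfrak{h}_m^*$ and then assemble a product counting identity is both harder than necessary and not quite right as stated. The right/left ideal supercharacter $\chi_{\lambda_k}$ lives on $K_k$, so its orbit $H_k\mu_k K_k$ sits in $\mathfrak{k}_k^*$, not in $\mathfrak{h}_k^*$; the projection of $G\lambda G$ to $\mathfrak{h}_k^*$ is a different object, and the cross-terms you flag really do prevent $G\lambda G$ from being a product over the three pieces. More importantly, the counting identity you propose is not needed and the paper explicitly does \emph{not} claim it: the final scalar the paper obtains is $|NH_m\eta NH_k|/|G\eta G|$, which is only known to equal $|G\eta|/|G\eta G|$ for $UT_n(\mathbb{F}_q)$, not for general pattern groups.

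The paper sidesteps the cross-term analysis entirely. It uses Lemma~\ref{indch} not just as a bookkeeping device but as the main reduction: writing the right/left ideal supercharacter of $K$ as $\chi_\mu=\text{Ind}_{R_\mu\times L_\mu}^K\text{Res}(\theta\circ\mu\circ f)$, Lemma~\ref{indch} collapses $\psi_\lambda\rtimes\chi_\mu$ to the induction of a \emph{single linear character} $\theta\circ\eta\circ f$ from the algebra subgroup $N(R_\mu\times L_\mu)$. The cross-terms are then handled in one stroke by the identification
\[
\mathfrak{n}\oplus\mathfrak{r}_\mu\oplus\mathfrak{l}_\mu=\{x\in\mathfrak{g}\mid g\eta h(x)=\eta(x)\text{ for all }g\in NH_m,\ h\in NH_k\},
\]
which immediately gives the fiber $\{\nu\in\mathfrak{g}^*:\nu|_{\mathfrak{n}\oplus\mathfrak{r}_\mu\oplus\mathfrak{l}_\mu}=\eta|_{\mathfrak{n}\oplus\mathfrak{r}_\mu\oplus\mathfrak{l}_\mu}\}=NH_m\eta NH_k$. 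From there a short averaging computation shows the induced character is a scalar multiple of $\sum_{\nu\in G\eta G}\theta\circ\nu\circ f$. This stabilizer characterization of $\mathfrak{n}\oplus\mathfrak{r}_\mu\oplus\mathfrak{l}_\mu$ is the step your proposal is missing; once you have it, the projection/counting route becomes unnecessary.
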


\begin{proof}
Fix an element $\eta \in \frak{g}^*$. Let $\lambda = \eta|_{\frak{n}}$, and define
\begin{align*}
        K_k &= \{h \in H_k \mid h\lambda = \lambda\} \text{ and} \\
        K_m &= \{h \in H_m \mid \lambda h = \lambda\}.
\end{align*}
Note that $K_k$ is a right ideal subgroup of $H_k$, $K_m$ is a left ideal subgroup of $H_m$, and
\[
        K = K_k \times K_m = I_\mathcal{L}(\psi_\lambda).
\]
Denote $f(K),f(K_k)$ and $f(K_m)$ by $\frak{k},\frak{k}_k$ and $\frak{k}_m$.

\bigbreak

Let $\mu = \eta|_{\frak{k}}$; note that $\mu$ can be written uniquely as $\mu_k \oplus \mu_m$, where $\mu_k \in \frak{k}_k^*$ and $\mu_m \in \frak{k}_m^*$. Let
\begin{align*}
        \frak{r}_\mu &= \{x \in \frak{k}_k \mid \mu_k(xy) = 0 \text{ for all }y \in \frak{h}_k\} \text{ and} \\
       \frak{l}_\mu &= \{x \in \frak{k}_m \mid \mu_k(yx) = 0 \text{ for all }y \in \frak{h}_m\},
\end{align*}
and denote the corresponding algebra groups by $R_\mu$ and $L_\mu$. Then the supercharacters of $K$ under the direct product of the right ideal supercharacter theory of $K_k$ and the left ideal supercharacter theory of $K_m$ are the elements
\[
        \chi_\mu = \text{Ind}_{R_\mu \times L_\mu}^K \text{Res}_{R_\mu \times L_\mu}^K(\theta \circ \mu \circ f).
\]

Observe that
\begin{align*}
        \psi_\lambda \rtimes \chi_\mu
        & = \psi_\lambda \rtimes \text{Res}_{R_\mu \times L_\mu}^K(\theta \circ \mu \circ f) \\
        & = \text{Ind}_{N(R_\mu \times L_\mu)}^G(\theta \circ (\mu |_{\frak{r}_\mu \oplus\frak{l}_\mu}\oplus \lambda)\circ f).
\end{align*}
We also have
\[
        \frak{n}\oplus\frak{r}_\mu \oplus\frak{l}_\mu = \{x \in \frak{g} \mid g\eta h (x) = \eta(x) \text{ for all } g\in NH_m,h \in NH_k\},
\]
and it follows that
\[
        \{\nu \in \frak{g}^* \mid \nu|_{\frak{n}\oplus\frak{r}_\mu \oplus\frak{l}_\mu} = \eta|_{\frak{n}\oplus\frak{r}_\mu \oplus\frak{l}_\mu}\} = NH_m\eta NH_k.
\]

We now have
\begin{align*}
        \text{Ind}_{N(R_\mu \times L_\mu)}^G(\theta \circ (\mu |_{\frak{r}_\mu \oplus\frak{l}_\mu}\oplus \lambda)\circ f)
        & = \frac{1}{|G|}\sum_{g \in G} \sum_{\substack{\nu \in \frak{g}^* \\ \nu|_{\frak{n}\oplus\frak{r}_\mu \oplus\frak{l}_\mu} = \eta|_{\frak{n}\oplus \frak{r}_\mu \oplus\frak{l}_\mu}}} \theta \circ g\nu g^{-1} \circ f \\
        & = \frac{1}{|G|}\sum_{g \in G} \sum_{\nu \in NH_m\eta NH_k} \theta \circ g\nu g^{-1} \circ f \\
        & = \frac{|NH_m\eta NH_k|}{|G||H||N|^2}\sum_{g \in G} \sum_{\substack{(h_k,h_m) \in H \\ a,b \in N}} \theta \circ g (ah_k\eta bh_m )g^{-1} \circ f \\
        & = \frac{|NH_m\eta NH_k|}{|G|^2}\sum_{g_1,g_2 \in G} \theta \circ g_1\eta g_2 \circ f \\
        & = \frac{|NH_m\eta NH_k|}{|G \eta G|}\sum_{\nu \in G \eta G} \theta \circ \nu  \circ f.
\end{align*}
Note that $NH_m \times NH_k$ is a normal subgroup of $G\times G$, hence orbit representative choice does not affect $|NH_m\eta NH_k|$.
\end{proof}

Our construction yields supercharacters which might differ from those defined in Theorem~\ref{alggpsct} by a constant multiple, depending on whether $|NH_m\eta NH_k|$ and $|G\eta|$ are equal. In the case that $G = UT_n(\mathbb{F}_q)$, we in fact have that $|NH_m\eta NH_k|=|G\eta|$, although in general it seems to be unknown whether these orbits are of equal size.

\section{Supercharacter theories of unipotent groups of other types}\label{othertypes}

In \cite{andrews1}, the author constructs supercharacter theories of a large collection of unipotent groups which includes the unipotent orthogonal, symplectic, and unitary groups. This construction generalizes supercharacter theories constructed by Andr\'e--Neto in \cite{MR2457229,MR2264135,MR2537684}. In this section we review this construction and reproduce it using the method of little groups.

\subsection{Supercharacter theories of unipotent groups defined by anti-involutions}

Let $q$ a power of an odd prime and let $G$ be a pattern subgroup of $UT_{2n}(\mathbb{F}_{q^k})$ for some $n$ and $k$. Let $\frak{g}$ be the corresponding subalgebra of $\frak{ut}_{2n}(\mathbb{F}_{q^k})$, considered as an $\mathbb{F}_q$-algebra. Let $\dagger:\frak{g} \to \frak{g}$ be an algebra anti-involution such that $(\alpha e_{ij})^\dagger \in \mathbb{F}_{q^k}^\times e_{\bar{j}\bar{i}}$ for all $\alpha \in \mathbb{F}_{q^k}^\times$ and $i<j$ (where $\bar{i} = n+1-i$). For $g =1+x \in G$, define $g^\dagger = 1+x^\dagger$; this gives an anti-involution of $G$. Consider the group
\[
        U = \{u \in G \mid u^\dagger = u^{-1}\},
\]
along with the corresponding Lie algebra
\[
        \frak{u} = \{x \in \frak{g} \mid x^\dagger = -x\}.
\]
There are actions of $G$ on $\frak{u}$ and $\frak{u}^*$ defined by
\begin{align*}
    g \cdot x & = gxg^\dagger \text{ and} \\
    (g \cdot \lambda)(x)& = \lambda(g^{-1}xg^{-\dagger})
\end{align*}
for $g \in G$, $x \in \frak{u}$, and $\lambda \in \frak{u}^*$.

\bigbreak

Let
\[
\frak{h} = \bigg\{ x\in \frak{g} \:\bigg|\: x_{ij} = 0 \text{ if } j\leq\frac{n}{2}\bigg\},
\]
and define $H = \frak{h}+1$.

\begin{thm}[{\cite[Theorem 6.1]{andrews1}}]\label{sctofu} Let $f$ be any Springer morphism (as in \cite{andrews1}) and let $\theta:\mathbb{F}_q^+ \to \mathbb{C}^\times$ be a nontrivial homomorphism. For $u \in U$ and $\lambda \in \frak{u}^*$, define
\[
        K_u = \{ v \in U \mid f(v) \in G \cdot f(u)\}\quad \text{and}\quad
        \chi_\lambda = \frac{|H \cdot \lambda|}{|G \cdot \lambda|} \sum_{\mu \in G \cdot \lambda}
        \theta \circ \mu \circ f.
\]
We have the following.
\begin{enumerate}
\item The functions $\chi_\lambda$ are characters of $U$.
\item The partition of $U$ given by $\mathcal{K} = \{K_u \mid u \in U\}$, along with $\mathcal{X}=\{\chi_\lambda \mid \lambda \in \frak{u}^*\}$, form a supercharacter theory of $U$. This supercharacter theory is independent of the choice of $\theta$ and $f$.
\end{enumerate}
\end{thm}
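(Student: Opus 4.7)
The plan is to apply Theorem~\ref{sctmlg} after decomposing $U$ as a semidirect product with abelian kernel. Let $N \subseteq U$ consist of those elements whose only non-identity entries lie in the ``upper-right'' block (positions $(i,j)$ with $i \leq n$ and $j > n$); the anti-involution condition $u^\dagger = u^{-1}$ imposes an anti-diagonal-symmetry relation on these entries, so $N$ is abelian. Setting $H_U = U \cap H$, projection onto the upper-left block identifies $H_U$ with a pattern subgroup of $UT_n(\mathbb{F}_{q^k})$, and $U = N \rtimes H_U$. On $H_U$ I would use the lattice $\mathcal{L}$ of ideal subgroups with the left/right ideal supercharacter theories of Proposition~\ref{lemlissct}, which by Lemmas~\ref{lemlissublattice}--\ref{lemlisconj} satisfies (L1)--(L4); for each $\psi_\lambda \in \textup{Irr}(N)$ I would take $H_{\psi_\lambda} = I_\mathcal{L}(\psi_\lambda)$.

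With these choices in place, Theorem~\ref{sctmlg} directly produces a supercharacter theory of $U$ whose supercharacters are the characters $\psi_\lambda \rtimes \chi$. The remaining task is to identify these, up to positive scalar multiples, with the $\chi_\lambda$ of the theorem statement. The Springer morphism $f$ is $G$-equivariant for the twisted action $g \cdot u = g u g^\dagger$, so it transports the partition of $\frak{u}$ into $G$-orbits onto the partition $\mathcal{K}$ of $U$, which settles the superclass side. For the supercharacters, I would parametrize $\lambda \in \frak{u}^*$ by its restriction data on $\frak{n}$ and on the ideal $\frak{k} = f(H_{\psi_\lambda})$, and then carry out a computation modeled on the proof of Theorem~\ref{thmmlgtypea}: expand $\textup{Ind}_{NH_{\psi_\lambda}}^U(\widetilde{\psi_\lambda} \cdot \textup{Inf}(\chi))$ as an average of orbit sums of $\theta \circ \mu \circ f$ and collapse it to $\tfrac{|H \cdot \lambda|}{|G \cdot \lambda|} \sum_{\mu \in G \cdot \lambda} \theta \circ \mu \circ f$.

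The main obstacle will be the orbit-matching step: one must show that the elements of $\frak{g}^*$ that agree with $\lambda$ on $\frak{n} \oplus \frak{r}_\mu \oplus \frak{l}_\mu$ (in the notation of Theorem~\ref{thmmlgtypea}) form precisely a twisted $NH_U$-double coset inside the $G$-orbit of $\lambda$. This is where the anti-involution enters nontrivially, since the twisted action mixes the two sides of the double coset via $\dagger$, and one has to keep careful track of how $\dagger$ interacts with the splitting $\frak{g} = \frak{n} \oplus \frak{h}_U \oplus \frak{h}_U^\dagger$. Once this identification is secured, the explicit character formula follows as in Theorem~\ref{thmmlgtypea}, and each $\chi_\lambda$ is automatically a genuine character rather than only a class function, because it arises as an honest induced character.

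Independence of $\theta$ and $f$ should then drop out at the end: two nontrivial additive characters of $\mathbb{F}_q^+$ differ by scaling, which merely permutes the $G$-orbits on $\frak{u}^*$, and any two Springer morphisms differ by a $G$-equivariant unipotent bijection of $U$ that preserves both $\mathcal{K}$ and each orbit sum $\sum_{\mu \in G \cdot \lambda} \theta \circ \mu \circ f$.
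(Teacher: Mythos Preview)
First, a framing point: this theorem is not proved in the present paper at all---it is quoted from \cite{andrews1} and used as input. What the paper does prove is Theorem~\ref{sctofumlg}, which shows that a particular instance of the little-groups construction reproduces the supercharacter theory of Theorem~\ref{sctofu}. Your proposal is really an attempt at Theorem~\ref{sctofumlg}, recast as a self-contained proof of Theorem~\ref{sctofu}, so that is the right thing to compare against.

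Compared to the paper's argument for Theorem~\ref{sctofumlg}, your setup has one genuine error. You take $H_{\psi_\lambda} = I_\mathcal{L}(\psi_\lambda)$, the maximal lattice element inside the inertia group. The paper does \emph{not} do this: it extends $\lambda \in \frak{n}^*$ to $\bar\lambda \in \bar{\frak{n}}^*$ via $\bar\lambda(x) = \tfrac{1}{2}\lambda(x - x^\dagger)$ and then sets
\[
H_{\psi_\lambda} = \{h \in H \mid h\bar\lambda = \bar\lambda\},
\]
a one-sided stabilizer. The paper checks that this is contained in $I_\mathcal{L}(\psi_\lambda)$ and verifies (H1)--(H4) for it directly. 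Crucially, the paper remarks in Section~\ref{seccoarsersct} (and again in the ``Further directions'' section) that these choices of $H_\psi$ are \emph{not always maximal}, and that taking the maximal choice $I_\mathcal{L}(\psi_\lambda)$ yields a strictly finer supercharacter theory. So with your choice the orbit-matching step you anticipate would fail: the induced characters $\psi_\lambda \rtimes \chi$ would not collapse to the $\chi_\lambda$ of Theorem~\ref{sctofu}, but to supercharacters of a finer theory.

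A secondary point: your lattice description (``ideal subgroups with the left/right ideal supercharacter theories'') and the appearance of $\frak{r}_\mu \oplus \frak{l}_\mu$ are imported from the type~$A$ argument (Theorem~\ref{thmmlgtypea}), where $H \cong H_k \times H_m$ splits into two factors. Here $H$ is identified via $\varphi$ with a single pattern subgroup of $UT_n$, the lattice consists of (pullbacks of) right ideal subgroups only, and the proof of Theorem~\ref{sctofumlg} accordingly works with $\frak{n} \oplus \frak{r}_\mu$---there is no $\frak{l}_\mu$ summand. The identification $\frak{n} \oplus \frak{r}_\mu = \frak{u}_\eta$ then plugs directly into the formula $\chi_\eta = \textup{Ind}_{U_\eta}^U \textup{Res}_{U_\eta}^U(\theta \circ \eta \circ f)$ from \cite{andrews1}, which is how the paper closes the loop without redoing any orbit-counting.
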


In order to reproduce this supercharacter theory via the method of little groups, we need to decompose $U$ as a semidirect product. For each $h \in UT_n(\mathbb{F}_{q^k})$,
define $\widetilde{h} \in UT_n(\mathbb{F}_{q^k})$ by
\[
        \left(\begin{array}{ll} I & 0 \\ 0 & \widetilde{h} \end{array}\right) = \left(\begin{array}{ll} h & 0 \\ 0 & I \end{array}\right)^\dagger.
\]
Note that if
\[
        \left(\begin{array}{ll} h & 0 \\ 0 & I \end{array}\right)
\]
is an element of $G$, then
\[
        \left(\begin{array}{ll} h & 0 \\ 0 & \widetilde{h}^{-1} \end{array}\right)
\]
is in $U$. Consider the subgroups of $U$ given by
\[
        H = \bigg\{\left(\begin{array}{ll} h & 0 \\ 0 & \widetilde{h}^{-1} \end{array}\right) \;\bigg|\; \left(\begin{array}{ll} h & 0 \\ 0 & I \end{array}\right) \in G\bigg\}
\]
and
\[
        N = \bigg\{\left(\begin{array}{ll} I & x \\ 0 & I \end{array}\right) \;\bigg|\; \left(\begin{array}{ll} 0 & x \\ 0 & 0 \end{array}\right) \in \frak{u}\bigg\}.
\]
Observe that $N$ is an abelian normal subgroup of $U$ and $U = N \rtimes H$ as an internal semidirect product. Furthermore, $H$ is isomorphic to a pattern subgroup of $UT_n(\mathbb{F}_q)$ under the isomorphism
\[
        \varphi : \left(\begin{array}{ll} h & 0 \\ 0 & \widetilde{h}^{-1} \end{array}\right) \mapsto h.
\]
Define $\frak{h} = f(H)$ and $\frak{n} = f(N)$. Note that the irreducible characters of $N$ are exactly the elements of the form $\psi_\lambda = \theta \circ \lambda\circ f$, where $\lambda \in \frak{n}^*$. Let
\[
        \bar{\frak{n}} = \{x \in \frak{g} \mid x_{ij} = 0 \text{ if } i > n \text{ or } j \leq n\}.
\]
Given $\lambda \in  \frak{n}^*$, define $\bar{\lambda} \in \bar{\frak{n}}^*$ by $\bar{\lambda}(x) = \frac{1}{2}\lambda(x-x^\dagger)$. Note that $\bar{\lambda}|_{\frak{n}} = \lambda$.

\bigbreak

Let $\mathcal{L}$ be the lattice of subgroups of $H$ that map to right ideal subgroups under the isomorphism $\varphi$. Each $A \in \mathcal{L}$ is equipped with a supercharacter theory with supercharacters
\[
        \{\chi \circ \varphi \mid \chi \text{ is a right ideal supercharacter of } \varphi(A)\}.
\]
It will be useful to describe these supercharacters in terms of the functionals $\mu \in \frak{a}^*$, where $\frak{a} = f(A)$. Let $M$ be the subgroup of $G$ defined by
\[
        M = \bigg\{\left(\begin{array}{ll} I & 0 \\ 0 & h\end{array}\right)\in G \bigg\},
\]
and let
\[
       R_\lambda = f^{-1}(\frak{r}_\lambda),\quad \text{where} \quad \frak{r}_\lambda = \{x \in \frak{a} \mid (m\cdot \lambda)(x) = \lambda(x) \text{ for all } m \in M\} \\
        .
\]
Then the supercharacters of $A$ are the functions
\[
        \chi_\lambda = \text{Ind}_{R_\lambda}^A(\theta \circ \lambda \circ f).
\]
(We should mention at this point that the supercharacters of algebra groups can be constructed using any Springer morphism $f$ in place of the usual $g \mapsto g-1$ map. The choice of Springer morphism has no effect on the resulting supercharacter theory.)
\bigbreak

For $\psi_\lambda \in \text{Irr}(N)$, let
\[
        H_{\psi_\lambda} = \{h \in H \mid h\bar{\lambda} = \bar{\lambda}\}.
\]
Note that $\varphi(H_{\psi_\lambda})$ is a right ideal subgroup of $UT_n(\mathbb{F}_q)$, and for $h^{-1} \in H_{\psi_\lambda}$ and $x \in \frak{n}$,
\begin{align*}
        (h^{-1} \cdot \psi_\lambda)(f^{-1}(x))
        & = \theta(\lambda(hxh^{-1})) \\
        & = \theta(\bar{\lambda}(hxh^{-1})) \\
        & = \theta(\bar{\lambda}(xh^{-1})) \\
        & = \theta(\bar{\lambda}(h(-x^\dagger))) \\
        & = \theta(\bar{\lambda}(-x^\dagger)) \\
        & = \psi_\lambda(f^{-1}(x)).
\end{align*}
It follows that $H_{\psi_\lambda} \subseteq I_\mathcal{L}(\psi_\lambda)$. Furthermore, if $k \in I_\mathcal{L}(\psi_\lambda)$, then $k \bar{\lambda}k^{-1} = \bar{\lambda}$, hence $k^{-1}\bar{\lambda} = \bar{\lambda}k^{-1}$. This means that, for $h \in H_{\psi_\lambda}$,
\[
        khk^{-1}\bar{\lambda} = kh\bar{\lambda}k^{-1} = \bar{\lambda},
\]
and $H_{\psi_\lambda}$ is in fact normal in $I_\mathcal{L}(\psi_\lambda)$, satifying condition (H1). If $\mu = k \lambda k^{-1}$ for some $k \in H$, then
\[
       \bar{\mu} = k \bar{\lambda} k^{-1},
\]
from which it follows that
\[
        H_{(\psi_\lambda)^k} = H_{\psi_{(\lambda^k)}} = H_{\psi_\lambda}^k,
\]
and condition (H2) holds. Condition (H3) is clear, and (H4) follows from the fact that $\psi_{\lambda+\mu} = \psi_\lambda\psi_\mu$.

\begin{thm}\label{sctofumlg} The above choices of $H_\psi$ produce the same supercharacter theory of $U$ as in Theorem~\ref{sctofu}.
\end{thm}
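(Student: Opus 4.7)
The plan is to parallel the proof of Theorem~\ref{thmmlgtypea}. Fixing $\eta \in \frak{u}^*$, I would exhibit $\lambda \in \frak{n}^*$ and a supercharacter $\chi_\mu$ of $H_{\psi_\lambda}$ such that $\psi_\lambda \rtimes \chi_\mu$ is a positive scalar multiple of the $\chi_\eta$ of Theorem~\ref{sctofu}. Since both constructions produce characters in which each irreducible appears exactly once, matching supercharacters up to scalars suffices to identify the two supercharacter theories.

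First I would set $\lambda = \eta|_{\frak{n}}$, let $\frak{k} = f(H_{\psi_\lambda})$, and take $\mu = \eta|_{\frak{k}}$. The relevant supercharacter of $H_{\psi_\lambda}$ is
\[
        \chi_\mu = \text{Ind}_{R_\mu}^{H_{\psi_\lambda}}(\theta \circ \mu \circ f),
\]
where $\frak{r}_\mu = \{x \in \frak{k} \mid (m \cdot \mu)(x) = \mu(x) \text{ for all } m \in M\}$ and $R_\mu = f^{-1}(\frak{r}_\mu)$. Applying Lemma~\ref{indch} followed by Lemma~\ref{extch}, together with the multiplicative behavior of the Springer morphism on products $nh$ with $n \in N$ and $h \in R_\mu$ (the nonzero entries of $f(n)$ and $f(h)$ sit in complementary block regions, so multiplication in $U$ corresponds to addition in $\frak{u}$), the character $\psi_\lambda \rtimes \chi_\mu$ collapses to $\text{Ind}_{NR_\mu}^U(\theta \circ \tau \circ f)$, where $\tau \in (\frak{n} \oplus \frak{r}_\mu)^*$ is determined by $\tau|_{\frak{n}} = \lambda$ and $\tau|_{\frak{r}_\mu} = \mu|_{\frak{r}_\mu}$.

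Next I would expand this induced character via the Frobenius sum and group its terms by the functional on $\frak{n} \oplus \frak{r}_\mu$ to which they restrict. The computation will hinge on the geometric identity
\[
        \{\nu \in \frak{u}^* \mid \nu|_{\frak{n} \oplus \frak{r}_\mu} = \tau\} = H \cdot \eta,
\]
which replaces the left-right double coset identification used in Theorem~\ref{thmmlgtypea} with a single $H$-orbit identification, and which uses the definition $H_{\psi_\lambda} = \{h \in H : h\bar{\lambda} = \bar{\lambda}\}$, the characterization of $\frak{r}_\mu$ as an $M$-stabilizer, and the $\dagger$-equivariance built into $\bar{\lambda}$. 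Once this is in place, the Frobenius double sum collapses to $c \sum_{\nu \in G \cdot \eta} \theta \circ \nu \circ f$ for some explicit $c>0$, and a multiplicity count matches $c$ with $|H \cdot \eta|/|G \cdot \eta|$.

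The hard part will be the orbit identification above. In Theorem~\ref{thmmlgtypea} the ambient group acted on $\frak{g}^*$ by independent left and right translations, and the corresponding set of extensions was literally a double coset $NH_m \eta NH_k$; here, the $G$-action on $\frak{u}^*$ is a single $\dagger$-twisted action, so the argument must genuinely exploit $\dagger$-invariance to compress what would otherwise be a two-sided orbit into a single $H$-orbit. The scalar match in the concluding step should then be routine bookkeeping, analogous to the final calculation of Theorem~\ref{thmmlgtypea}.
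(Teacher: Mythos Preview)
Your setup matches the paper exactly: take $\lambda=\eta|_{\frak{n}}$, $\mu=\eta|_{\frak{h}_{\psi_\lambda}}$, and reduce $\psi_\lambda\rtimes\chi_\mu$ via Lemma~\ref{indch} to $\text{Ind}_{NR_\mu}^U\text{Res}_{NR_\mu}^U(\theta\circ\eta\circ f)$. The divergence is in the last step. The paper does not expand this induced character as a Frobenius sum at all; instead it observes directly that
\[
\frak{n}\oplus\frak{r}_\mu=\{x\in\frak{u}\mid (k\cdot\eta)(x)=\eta(x)\text{ for all }k\}=\frak{u}_\eta,
\]
so that $NR_\mu=U_\eta$, the subgroup defined in \cite[Section~6]{andrews1}. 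Since that reference already establishes $\chi_\eta=\text{Ind}_{U_\eta}^U\text{Res}_{U_\eta}^U(\theta\circ\eta\circ f)$, the conclusion is immediate and yields \emph{exact} equality $\psi_\lambda\rtimes\chi_\mu=\chi_\eta$, not merely equality up to a positive scalar.

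Your proposed route---expanding the induction, identifying the fiber $\{\nu:\nu|_{\frak{n}\oplus\frak{r}_\mu}=\tau\}$ as an $H$-orbit, then collapsing to the $G$-orbit sum---can be made to work, but it is longer and carries a real hazard. There are two different groups called $H$ in this section: the semidirect-product complement inside $U$ (your $H$), and the subgroup $1+\frak{h}\subseteq G$ appearing in the coefficient $|H\cdot\eta|/|G\cdot\eta|$ of Theorem~\ref{sctofu}. The fiber you write down is naturally an orbit under the $\dagger$-twisted action of the latter on $\frak{u}^*$, not under conjugation by the former; conflating them would make the scalar bookkeeping and the ``$\dagger$-equivariance'' argument you sketch go wrong. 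The paper's shortcut through $U_\eta$ sidesteps this entirely and avoids any orbit-size comparison.
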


\begin{proof} Let $\eta \in \frak{u}^*$ be a functional and let $\lambda = \eta|_{\frak{n}}$. Define $\frak{h}_{\psi_\lambda} = f(H_{\psi_\lambda})$ and let $\mu=\eta|_{\frak{h}_{\psi_\lambda}}$. We claim that
\[
        \chi_\eta = \psi_\lambda \rtimes \chi_\mu.
\]

We have that
\begin{align*}
        \psi_\lambda \rtimes \chi_\mu
        & = \psi_\lambda \rtimes \text{Ind}_{R_\mu}^{H_{\psi_\lambda}} \text{Res}_{R_\mu}^{H_{\psi_\lambda}} (\theta \circ \mu \circ f) \\
        & = \psi_\lambda \rtimes \text{Res}_{R_\mu}^{H_{\psi_\lambda}} (\theta \circ \mu \circ f) \\
        & = \text{Ind}_{NR_\mu}^U\text{Res}_{NR_\mu}^{NH_{\psi_\lambda}} (\theta \circ (\lambda\oplus\mu) \circ f) \\
        & = \text{Ind}_{NR_\mu}^U\text{Res}_{NR_\mu}^{U} (\theta \circ \eta \circ f).
\end{align*}
Note that
\[
        \frak{n} \oplus \frak{r}_\mu = \{x \in \frak{u} \mid (k \cdot \eta)(x) = \eta(x) \text{ for all } k \in K\},
\]
hence $\frak{n} \oplus \frak{r}_\mu = \frak{u}_\eta$, where $\frak{u}_\eta = f(U_\eta)$ is as defined in \cite[Section 6]{andrews1}. This means that $NR_\mu = U_\eta$, and
\[
        \psi_\lambda \rtimes \chi_\mu = \text{Ind}_{U_\eta}^U\text{Res}_{U_\lambda}^U(\theta \circ \eta \circ f) = \chi_\eta.
\]
\end{proof}

The motivations for this construction are the unipotent orthogonal, symplectic and unitary groups in even dimension. Each of these groups is defined by an anti-involution with the required properties.

\subsection{Unipotent orthogonal groups}

Let
\[
        U = UO_{2n}(\mathbb{F}_q) = \{ g \in UT_{2n}(\mathbb{F}_q) \mid g^{-1} = J g^t J\}
\]
be the unipotent orthogonal group, where $q$ is a power of an odd prime. The group $U$ is defined by the anti-involution $x \mapsto J x^t J$ of $\mathfrak{ut}_{2n}(\mathbb{F}_q)$. It follows that there is a supercharacter theory of $U$ as described in Theorem~\ref{sctofumlg}, and furthermore that this supercharacter theory coincides with that produced in \cite[Section 6]{andrews1} and with the earlier construction of Andr\'e--Neto in \cite{MR2264135,MR2537684}. The semidirect product decomposition of $U$ as $U = N \rtimes H$ has
\[
        H = \bigg\{\left(\begin{array}{ll} h & 0 \\ 0 & J h^{-t}J \end{array}\right) \;\bigg|\; h \in UT_n(\mathbb{F}_q)\bigg\}
\]
and
\[
        N = \bigg\{\left(\begin{array}{ll} I & x \\ 0 & I \end{array}\right) \;\bigg|\; x \in \frak{o}_n(\mathbb{F}_q)\bigg\},
\]
where $\frak{o}_n(\mathbb{F}_q) = \{x \in \frak{gl}_n(\mathbb{F}_q) \mid Jx^tJ = -x\}$. This means that $U$ is isomorphic to the external semidirect product $UT_n(\mathbb{F}_q) \rtimes \frak{o}_n(\mathbb{F}_q)$, where $\frak{o}_n(\mathbb{F}_q)$ is considered as an additive group and the left action of $UT_n(\mathbb{F}_q)$ on $\frak{o}_n(\mathbb{F}_q)$ is given by $h \cdot x = hxJh^tJ$.

\subsection{Unipotent symplectic groups}

Let
\[
        U = USp_{2n}(\mathbb{F}_q) = \{ g \in UT_{2n}(\mathbb{F}_q) \mid g^{-1} = - \Omega g^t \Omega\}
\]
be the unipotent symplectic group, where
\[
        \Omega = \left(\begin{array}{ll} 0 & J \\ -J & 0 \end{array}\right).
\]
The group $U$ is defined by the anti-involution $x \mapsto -\Omega x^t \Omega$ of $\mathfrak{ut}_{2n}(\mathbb{F}_q)$. It follows that there is a supercharacter theory of $U$ as described in Theorem~\ref{sctofumlg}, and furthermore that this supercharacter theory coincides with that produced in \cite[Section 6]{andrews1} and with the earlier construction of Andr\'e--Neto in \cite{MR2264135,MR2537684}. The semidirect product decomposition of $U$ as $U = N \rtimes H$ has
\[
        H = \bigg\{\left(\begin{array}{ll} h & 0 \\ 0 & J h^{-t}J \end{array}\right) \;\bigg|\; h \in UT_n(\mathbb{F}_q)\bigg\}
\]
and
\[
        N = \bigg\{\left(\begin{array}{ll} I & x \\ 0 & I \end{array}\right) \;\bigg|\; x \in \frak{o}_n^\perp(\mathbb{F}_q)\bigg\},
\]
where $\frak{o}_n^\perp(\mathbb{F}_q) = \{x \in \frak{gl}_n(\mathbb{F}_q) \mid Jx^tJ = x\}$. This means that $U$ is isomorphic to the external semidirect product $UT_n(\mathbb{F}_q) \rtimes \frak{o}_n^\perp(\mathbb{F}_q)$, where $\frak{o}_n^\perp(\mathbb{F}_q)$ is considered as an additive group and the left action of $UT_n(\mathbb{F}_q)$ on $\frak{o}_n^\perp(\mathbb{F}_q)$ is given by $h \cdot x = hxJh^tJ$.

\subsection{Unipotent unitary groups}

Let
\[
        U = UU_{2n}(\mathbb{F}_{q^2}) = \{ g \in UT_{2n}(\mathbb{F}_{q^2}) \mid g^{-1} = J \overline{g}^t J\}
\]
be the unipotent unitary group, where $\overline{g}_{ij} = (g_{ij})^q$ and $q$ is a power of an odd prime. The group $U$ is defined by the anti-involution $x \mapsto J\overline{x}^tJ$ of $\frak{ut}_{2n}(\mathbb{F}_{q^2})$ (considered as an $\mathbb{F}_q$-algebra). It follows that there is a supercharacter theory of $U$ as described in Theorem~\ref{sctofumlg}, and furthermore that this supercharacter theory coincides with that produced in \cite[Section 6]{andrews1}. The semidirect product decomposition of $U$ as $U = N \rtimes H$ has
\[
        H = \bigg\{\left(\begin{array}{ll} h & 0 \\ 0 & J \overline{h}^{-t}J \end{array}\right) \;\bigg|\; h \in UT_n(\mathbb{F}_{q^2})\bigg\}
\]
and
\[
        N = \bigg\{\left(\begin{array}{ll} I & x \\ 0 & I \end{array}\right) \;\bigg|\; x \in \frak{u}_n(\mathbb{F}_{q^2})\bigg\},
\]
where $\frak{u}_n(\mathbb{F}_{q^2}) = \{x \in \frak{gl}_n(\mathbb{F}_{q^2}) \mid J\overline{x}^tJ = -x\}$. This means that $U$ is isomorphic to the external semidirect product $UT_n(\mathbb{F}_{q^2}) \rtimes \frak{u}_n(\mathbb{F}_{q^2})$, where $ \frak{u}_n(\mathbb{F}_{q^2})$ is considered as an additive group and the left action of $UT_n(\mathbb{F}_q)$ on $ \frak{u}_n(\mathbb{F}_{q^2})$ is given by $h \cdot x = hxJ\overline{h}^tJ$.

\section{Coarser supercharacter theories of $UT_n(\mathbb{F}_q)$}\label{seccoarsersct}

One advantage of considering supercharacter theories in terms of the method of little groups is that it is often easy to modify supercharacter theories to obtain coarser or finer supercharacter theories. Suppose $G = N \rtimes H$ with $N$ abelian and that $\mathcal{L}'\subseteq \mathcal{L}$ are two sublattices of the subgroup lattice of $H$ satisfying (L1)--(L4). Furthermore, suppose that
\begin{enumerate}
\item the supercharacter theory assigned to each $K \in \mathcal{L}'$ is (not necessarily strictly) coarser than the supercharacter theory assigned to $K$ as an element of $\mathcal{L}$; and
\item for each $\psi \in \text{Irr}(N)$, the subgroups $H_\psi'\in \mathcal{L}'$ and $H_\psi\in \mathcal{L}$ have $H_\psi'\subseteq H_\psi$.
\end{enumerate}

\begin{lem}\label{coarsersct} The supercharacter theory of $G$ corresponding to the lattice $\mathcal{L}'$ and the subgroups $H_\psi'$ is (not necessarily strictly) coarser than the the supercharacter theory of $G$ corresponding to the lattice $\mathcal{L}$ and the subgroups $H_\psi$.
\end{lem}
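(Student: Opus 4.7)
The plan is to exploit the bijection between supercharacter theories of $G$ and unital subalgebras of $CF(G)$ closed under both products: the relation ``(not necessarily strictly) coarser'' between two supercharacter theories on $G$ corresponds exactly to containment of the associated subalgebras. Since both $\mathcal{L}$ and $\mathcal{L}'$ are lattices of subgroups of the same $H$ and both theories are built from the conjugation action of $G$ on $\text{Irr}(N)$, the same set $S$ of orbit representatives may be used for both. Thus it suffices to show that every supercharacter $\psi \rtimes \chi'$ in $\textup{SCh}(\mathcal{L}')$ lies in the linear span of $\textup{SCh}(\mathcal{L})$.

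Fix such a supercharacter: $\psi \in S$ and $\chi'$ a supercharacter of $H_\psi'$ in its $\mathcal{L}'$-supercharacter theory. By hypothesis (1), the $\mathcal{L}'$-theory on $H_\psi'$ is coarser than the theory assigned to $H_\psi'$ as an element of $\mathcal{L}$, so $\chi'$ is in particular a superclass function of $H_\psi'$ with respect to the $\mathcal{L}$-theory on $H_\psi'$. Hypothesis (2) gives $H_\psi' \subseteq H_\psi$, so by Lemma~\ref{indch},
\[
        \psi \rtimes \chi' \;=\; \psi \rtimes \textup{Ind}_{H_\psi'}^{H_\psi}(\chi').
\]
Property (L4) for the lattice $\mathcal{L}$ then ensures that $\textup{Ind}_{H_\psi'}^{H_\psi}(\chi')$ is a superclass function of $H_\psi$ with respect to its $\mathcal{L}$-theory.

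At this point the argument used inside the proof of Theorem~\ref{sctmlg} applies directly: whenever $\eta$ is a superclass function of $H_\psi$ in the $\mathcal{L}$-theory, the character $\psi \rtimes \eta$ is a superclass function of $G$ in the supercharacter theory $\textup{SCh}(\mathcal{L})$, hence lies in the linear span of $\textup{SCh}(\mathcal{L})$. Applying this with $\eta = \textup{Ind}_{H_\psi'}^{H_\psi}(\chi')$ yields $\psi \rtimes \chi' \in \text{span}(\textup{SCh}(\mathcal{L}))$, which completes the containment of subalgebras and hence the coarser/finer comparison. There is no real obstacle: the proof is essentially a three-step chase, using hypothesis (1) to interpret $\chi'$ in the $\mathcal{L}$-theory on $H_\psi'$, Lemma~\ref{indch} plus hypothesis (2) to push $\chi'$ up to $H_\psi$, and (L4) together with the proof technique of Theorem~\ref{sctmlg} to land inside $\text{span}(\textup{SCh}(\mathcal{L}))$. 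The most delicate point to articulate clearly is simply that the $\textup{span}(\textup{SCh}(\mathcal{L}))$ equals the algebra of $\mathcal{L}$-superclass functions of $G$, so that the conclusion of the Theorem~\ref{sctmlg} argument is precisely what is needed.
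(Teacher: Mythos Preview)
Your proposal is correct and follows essentially the same approach as the paper's own proof: take a supercharacter $\psi \rtimes \chi'$ of the $\mathcal{L}'$-theory, apply Lemma~\ref{indch} to rewrite it as $\psi \rtimes \textup{Ind}_{H_\psi'}^{H_\psi}(\chi')$, use (L4) to see the induced class function is a superclass function of $H_\psi$, and conclude via the mechanism of Theorem~\ref{sctmlg}. Your version is more explicit about the role of hypothesis~(1) and the subalgebra interpretation of ``coarser,'' but the argument is the same.
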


\begin{proof} Let $\psi \in \text{Irr}(N)$ and let $\chi$ be a supercharacter of $H_\psi'$. Then $\psi \rtimes \chi = \psi \rtimes \text{Ind}_{H_\psi'}^{H_\psi}(\chi)$; as $\text{Ind}_{H_\psi'}^{H_\psi}(\chi)$ is a superclass function of $H_\psi$, $\psi \rtimes \chi$ is a superclass function of $G$ in the supercharacter theory of $G$ corresponding to the lattice $\mathcal{L}$ and the subgroups $H_\psi$. It follows that the supercharacter theory of $G$ corresponding to the lattice $\mathcal{L}'$ and the subgroups $H_\psi'$ is at least as coarse as the supercharacter theory of $G$ corresponding to the lattice $\mathcal{L}$ and the subgroups $H_\psi$.
\end{proof}
\noindent
\textbf{Cautionary Example.} Let $G = N \rtimes H$ be any semidirect product with $N$ abelian; we can choose
\begin{enumerate}
\item $\mathcal{L} = \{\text{Normal subgroups of }H\}$ under their usual character theory, and
\item $\mathcal{L}' = \{\text{Normal subgroups of }H\}$ under the supercharacter theories formed from conjugation action of $H$.
\end{enumerate}
As long as $H_\psi = H_\psi'$ for all $\psi \in \text{Irr}(N)$, these two choices of lattice will give the same supercharacter theory of $G$, even though in general the supercharacter theories of the elements of $\mathcal{L}'$ can be strictly coarser than those in $\mathcal{L}$.

\bigbreak

As an example, we construct a collection of supercharacter theories of $UT_n(\mathbb{F}_q)$ which are coarser than the usual supercharacter theory and have nice indexing sets for the superclasses and supercharacters. If $G = 1+\frak{g}$ is an algebra group and $K = 1+\frak{k}$ is a subgroup with $\frak{k}$ a two-sided ideal of $\frak{g}$, there is a supercharacter theory of $K$ with superclasses
\[
        K_g = \{h \in K \mid f(h) \in Gf(g)G\}
\]
and supercharacters
\[
        \chi_\lambda = \sum_{\mu \in G \lambda G} \theta \circ \mu \circ f.
\]
This result is a consequence of Theorem~\ref{alggpsct}; the supercharacter theory will be referred to as the \emph{supernormal supercharacter theory} of $K$ (see \cite{MR2774691} for more on supernormal subgroups).

\bigbreak

To construct a supercharacter theory of $UT_n(\mathbb{F}_q)$, let $n = m+k$, and let $H = H_k \times H_m$ and $N$ be as in Section~\ref{typea}. Let $\mathcal{L}$ be the lattice of subgroups of $H$ of the form $K_k \times K_m$ with $K_k = 1+\frak{k}_k$, $K_m = 1+\frak{k}_m$, and $\frak{k}_k$ and $\frak{k}_m$ two-sided ideals of $\frak{ut}_k(\mathbb{F}_q)$ and $\frak{ut}_m(\mathbb{F}_q)$, respectively. We equip such a subgroup $K_k \times K_m$ with the direct product of the supernormal supercharacter theories of $K_k$ and $K_m$. For each $\psi \in \text{Irr}(N)$, let $H_\psi = I_\mathcal{L}(\psi)$. Observe that
\begin{enumerate}
\item this choice of lattice $\mathcal{L}$ and of $H_\psi$ satisfies (L1)--(L4) and (H1)--(H4), and
\item by Lemma~\ref{coarsersct} the resulting supercharacter theory is coarser than the usual supercharacter theory.
\end{enumerate}
We will refer to this supercharacter theory as $SCT(n,k)$.

\bigbreak

If $\frak{n} = f(N)$, the irreducible characters of $N$ are exactly the elements $\psi_\lambda = \theta \circ \lambda \circ f$, where $\lambda \in \frak{n}^*$. We have that
\[
        H_{\psi_\lambda} = \bigg\{h \in H \;\bigg|\;\begin{array}{l} h_{ij} = 0 \text{ if there exist } i' \text{ and } j' \text{ with } \\ \lambda(e_{i'j'}) \neq 0 \text{ and } i' \leq i<k \text{ or } k<j \leq j'\end{array}\bigg\}.
\]

\begin{example} Let $n=7$, $k=3$, and $m=4$, and let $G = UT_7(\mathbb{F}_q)$. Define $\lambda \in \frak{n}^*$ by
\[
        \lambda(x) = x_{15}+x_{36}.
\]
In other words, $\lambda(x)$ is determined by the entries denoted by $\circ$ below:
\[
        \left(\begin{array}{lll|llll}
        1 & 0 & 0 & * & \circ & * & * \\
        0 & 1 & 0 & * & * & * & * \\
        0 & 0 & 1 & * & * & \circ & * \\ \hline
        0 & 0 & 0 & 1 & 0 & 0 & 0 \\
        0 & 0 & 0 & 0 & 1 & 0 & 0 \\
        0 & 0 & 0 & 0 & 0 & 1 & 0 \\
        0 & 0 & 0 & 0 & 0 & 0 & 1
        \end{array}\right).
\]
For this choice of $\lambda$, $H_{\psi_\lambda}$ will be the subgroup
\[
H_{\psi_\lambda}  = \left\{\left(\begin{array}{lll|llll}
        1 & 0 & 0 & 0 & \circ & 0 & 0 \\
        0 & 1 & 0 & 0 & 0 & 0 & 0 \\
        0 & 0 & 1 & 0 & 0 & \circ & 0 \\ \hline
        0 & 0 & 0 & 1 & 0 & 0 & * \\
        0 & 0 & 0 & 0 & 1 & 0 & * \\
        0 & 0 & 0 & 0 & 0 & 1 & * \\
        0 & 0 & 0 & 0 & 0 & 0 & 1
        \end{array}\right) \in H\right\},
\]
where once again the entries labeled with $\circ$ are the entries that determine $\lambda$. For comparison, note that in the usual supercharacter theory of $UT_7(\mathbb{F}_q)$, the subgroup $H_{\psi_\lambda}$ consists of the elements
\[
H_{\psi_\lambda}  = \left\{\left(\begin{array}{lll|llll}
        1 & 0 & 0 & 0 & \circ & 0 & 0 \\
        0 & 1 & * & 0 & 0 & 0 & 0 \\
        0 & 0 & 1 & 0 & 0 & \circ & 0 \\ \hline
        0 & 0 & 0 & 1 & 0 & 0 & * \\
        0 & 0 & 0 & 0 & 1 & 0 & * \\
        0 & 0 & 0 & 0 & 0 & 1 & * \\
        0 & 0 & 0 & 0 & 0 & 0 & 1
        \end{array}\right) \in H\right\}.
\]
\end{example}

The supercharacters and superclasses of $UT_n(\mathbb{F}_q)$ under the algebra group supercharacter theory are indexed by $\mathbb{F}_q$-set partitions (see \cite[Section~2]{MR2592079} for details). To each $\mathbb{F}_q$-set partition $\eta$, we assign representatives $g_\eta \in UT_n(\mathbb{F}_q)$ and $\lambda_\eta \in \frak{ut}_n(\mathbb{F}_q)^*$, and define supercharacters and superclasses corresponding to $\eta$. Let
\[
 K_\eta = K_{g_\eta} \quad \text{and}\quad \chi_\eta = \frac{|UT_n(\mathbb{F}_q)\lambda UT_n(\mathbb{F}_q)|}{|UT_n(\mathbb{F}_q)\lambda|}\chi_{\lambda_\eta}.
\]
where $K_{g_\eta}$ and $\chi_{\lambda_\eta}$ are as in Theorem~\ref{alggpsct}. The pair $(\{K_\eta\},\{\chi_\eta\})$ gives the same supercharacter theory of $UT_n(\mathbb{F}_q)$ as in Theorem~\ref{alggpsct}, and has the advantage that
\[
        \chi_\eta = \sum_{\mu \in UT_n(\mathbb{F}_q)\lambda_\eta UT_n(\mathbb{F}_q)} \theta \circ \mu \circ f
\]
and
\[
        \chi_\eta = \sum \psi(1)\psi,
\]
where the sum is over some set of irreducible characters $\psi$.

\bigbreak

We can describe the supercharacters and superclasses of $SCT(n,k)$ in a similar manner to those of the algebra group supercharacter theory. If an $\mathbb{F}_q$-set partition $\eta$ has the property that if $i\overset{a}{\frown} j$ is an arc of $\eta$ with $i \leq k <j$, then there are no arcs in $\lambda$ of the form $i' \overset{a'}{\frown} j'$ with $i <i'<j'\leq k$ or $k<i'<j'<j$, we will call $\eta$ a $k$\emph{-nonnesting} $\mathbb{F}_q$\emph{-set partition}.

\bigbreak

Given an $\mathbb{F}_q$-set partition $\eta$, let
\begin{align*}
        \overline{\eta}_k &= \bigg\{i \overset{a}{\frown}j \in \eta \;\bigg|\; \begin{array}{l} \text{ there are no } i'\overset{a'}{\frown}j' \in \eta \text{ with } \\ i<i'<j'\leq k<j   \text{ or } i \leq k<i'<j'<j \end{array} \bigg\} \quad\text{and}\\
        \widetilde{\eta}_k &= \bigg\{i \overset{a}{\frown}j \in \eta \;\bigg|\; \begin{array}{l} \text{ there are no } i'\overset{a'}{\frown}j' \in \eta \text{ with } \\ i'<i<j\leq k<j'   \text{ or } i' \leq k<i<j<j' \end{array} \bigg\}.
\end{align*}
In other words, we obtain $\overline{\eta}_k$ and $\widetilde{\eta}_k$ from $\eta$ by two different methods of removing arcs that prevent $\eta$ from being a $k$-nonnesting $\mathbb{F}_q$-set partition.

\begin{example} Let $n=12$, $k = 5$, and $m = 7$, and
\[
        \eta = \begin{tikzpicture}[baseline={([yshift=-2ex]current bounding box.center)}]
	\fill (0,0) circle (.1) node[below]{1};
    \fill (.5,0) circle (.1) node[below]{2};
    \fill (1,0) circle (.1) node[below]{3};
    \fill (1.5,0) circle (.1) node[below]{4};
    \fill (2,0) circle (.1) node[below]{5};
    \fill (2.5,0) circle (.1) node[below]{6};
    \fill (3,0) circle (.1) node[below]{7};
    \fill (3.5,0) circle (.1) node[below]{8};
    \fill (4,0) circle (.1) node[below]{9};
    \fill (4.5,0) circle (.1) node[below]{10};
    \fill (5,0) circle (.1) node[below]{11};
    \fill (5.5,0) circle (.1) node[below]{12};
    \draw (0,0) to [out=45, in=135] node[above] {$a_1$} (1,0);
	\draw (1,0) to [out=75, in=105] node[above] {$a_2$} (3,0);
    \draw (1.5,0) to [out=45, in=135] node[above] {$a_3$} (2,0);
	\draw (2.5,0) to [out=45, in=135] node[above] {$a_4$} (5,0);
    \draw (3.5,0) to [out=45, in=135] node[above] {$a_5$} (4,0);
	\draw (4.5,0) to [out=45, in=135] node[above] {$a_6$} (5.5,0);
    \end{tikzpicture};
\]
then we have
\begin{align*}
        \overline{\eta}_k &= \begin{tikzpicture}[baseline={([yshift=-2ex]current bounding box.center)}]
	\fill (0,0) circle (.1) node[below]{1};
    \fill (.5,0) circle (.1) node[below]{2};
    \fill (1,0) circle (.1) node[below]{3};
    \fill (1.5,0) circle (.1) node[below]{4};
    \fill (2,0) circle (.1) node[below]{5};
    \fill (2.5,0) circle (.1) node[below]{6};
    \fill (3,0) circle (.1) node[below]{7};
    \fill (3.5,0) circle (.1) node[below]{8};
    \fill (4,0) circle (.1) node[below]{9};
    \fill (4.5,0) circle (.1) node[below]{10};
    \fill (5,0) circle (.1) node[below]{11};
    \fill (5.5,0) circle (.1) node[below]{12};
    \draw (0,0) to [out=45, in=135] node[above] {$a_1$} (1,0);
	\draw (1,0) to [out=45, in=135] node[above] {$a_2$} (3,0);
	\draw (2.5,0) to [out=45, in=135] node[above] {$a_4$} (5,0);
    \draw (3.5,0) to [out=45, in=135] node[above] {$a_5$} (4,0);
	\draw (4.5,0) to [out=45, in=135] node[above] {$a_6$} (5.5,0);
    \end{tikzpicture} \quad\text{and} \\
       \widetilde{\eta}_k &= \begin{tikzpicture}[baseline={([yshift=-2ex]current bounding box.center)}]
	\fill (0,0) circle (.1) node[below]{1};
    \fill (.5,0) circle (.1) node[below]{2};
    \fill (1,0) circle (.1) node[below]{3};
    \fill (1.5,0) circle (.1) node[below]{4};
    \fill (2,0) circle (.1) node[below]{5};
    \fill (2.5,0) circle (.1) node[below]{6};
    \fill (3,0) circle (.1) node[below]{7};
    \fill (3.5,0) circle (.1) node[below]{8};
    \fill (4,0) circle (.1) node[below]{9};
    \fill (4.5,0) circle (.1) node[below]{10};
    \fill (5,0) circle (.1) node[below]{11};
    \fill (5.5,0) circle (.1) node[below]{12};
    \draw (0,0) to [out=45, in=135] node[above] {$a_1$} (1,0);
    \draw (1.5,0) to [out=45, in=135] node[above] {$a_3$} (2,0);
    \draw (2.5,0) to [out=45, in=135] node[above] {$a_4$} (5,0);
    \draw (3.5,0) to [out=45, in=135] node[above] {$a_5$} (4,0);
	\draw (4.5,0) to [out=45, in=135] node[above] {$a_6$} (5.5,0);
    \end{tikzpicture}.
\end{align*}
\end{example}

The constructions $\overline{\eta}_k$ and $\widetilde{\eta}_k$ each define an equivalence relation on the set of $\mathbb{F}_q$-set partitions. In both cases, the equivalence classes are indexed by the $k$-nonnesting $\mathbb{F}_q$-set partitions. It follows that the two relations have the same number of equivalence classes.

\bigbreak

If $K_\eta$ denotes the superclass of $UT_n(\mathbb{F}_q)$ associated to the $\mathbb{F}_q$-set partition $\eta$ in the algebra group supercharacter theory, let
\[
        K_{[\eta]_k} = \bigcup_{\widetilde{\nu}_k = \widetilde{\eta}_k} K_{\nu}.
\]
Similarly, if $\chi_\eta$ is the supercharacter associated to $\eta$ in the algebra group supercharacter theory as defined above, let
\[
        \chi_{[\eta]_k} = \sum_{\overline{\nu}_k=\overline{\eta}_k} \chi_{\nu}.
\]

\begin{thm}\label{sctnk} The characters $\chi_{[\eta]_k}$, along with the subsets $K_{[\eta]_k}$, are the supercharacters and superclasses of $SCT(n,k)$. If $\eta$ and $\nu$ are $\mathbb{F}_q$-set partitions and $g \in K_{[\nu]_k}$, we have that
\[
        \chi_{[\eta]_k}(g) = \left\{\begin{array}{ll}
        \frac{\chi_{[\eta]_k}(1)}{\chi_{\eta}(1)}\chi_{\eta}(g)  & \quad\text{if there are no } i \overset{a}{\frown} j \in \eta \text{ and } i' \overset{a'}{\frown} j' \in \nu \\
        {} & \quad\text{with }i<i'<j'\leq k<j \text{ or } i \leq k<i'<j'<j, \\
        {}&{}\\
        0 &\quad\text{otherwise.}\end{array}\right.
\]
\end{thm}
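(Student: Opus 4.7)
The plan is to identify $\chi_{[\eta]_k}$ with the method-of-little-groups supercharacters of $SCT(n,k)$ produced by Theorem~\ref{sctmlg}, deduce the superclasses from the pointwise-product / idempotent correspondence, and then evaluate the characters on superclasses by reducing to the algebra group supercharacter theory of Theorem~\ref{alggpsct}.

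First, I would parameterize the supercharacters of $SCT(n,k)$ explicitly. By construction these are the characters $\psi_\lambda \rtimes \chi_\mu$, where $\psi_\lambda \in \text{Irr}(N)$ corresponds to a functional $\lambda \in \frak{n}^*$ encoding the \emph{crossing} arcs $i \overset{a}{\frown} j$ with $i \leq k < j$, and $\chi_\mu$ is a supernormal supercharacter of $H_{\psi_\lambda} = I_\mathcal{L}(\psi_\lambda)$. Unwinding the description of $H_{\psi_\lambda}$ given in the preceding example, its supernormal supercharacters are indexed by functionals $\mu$ supported on positions $(i,j)$ with $i < j \leq k$ or $k < i < j$ that are \emph{not} inner-nested under any arc of $\lambda$, modulo the $(H_k\times H_m)$-action. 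The joint data $(\lambda,\mu)$ therefore specifies a $k$-nonnesting $\mathbb{F}_q$-set partition $\eta$ (with crossing arcs from $\lambda$ and non-crossing arcs from $\mu$), and every $k$-nonnesting $\eta$ arises uniquely this way.

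Next, I would verify that $\psi_\lambda \rtimes \chi_\mu = \chi_{[\eta]_k}$. The supernormal supercharacter $\chi_\mu$ of $H_{\psi_\lambda}$ is a sum of algebra group supercharacters of $H_{\psi_\lambda}$, obtained by summing $\theta \circ \mu' \circ f$ over the $(H_k\times H_m)$-orbit of $\mu$. Using the explicit formula for $\psi \rtimes \chi$ from the lemma preceding Lemma~\ref{indch} (and the computation already carried out in the proof of Theorem~\ref{thmmlgtypea}), $\psi_\lambda \rtimes \chi_\mu$ unfolds into a sum of algebra group supercharacters $\chi_{\nu}$ of $UT_n(\mathbb{F}_q)$ whose parameter functional $\nu$ extends $\lambda$ on the crossing positions and lies in the combined $(NH_m\times NH_k)$-orbit of $\mu$ on the non-crossing positions. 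The combinatorial content of this orbit calculation is precisely that it enumerates exactly those $\nu$ with $\overline{\nu}_k = \overline{\eta}_k$; this is the defining property of $\overline{\cdot}_k$, since removing inner-nested non-crossing arcs is exactly the operation by which two functionals end up in the same combined orbit.

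Third, the superclass partition $\{K_{[\eta]_k}\}$ is then determined by the supercharacters via the bijection of Lemma~\ref{ptprod}: the superclasses are the level sets common to all the $\chi_{[\eta]_k}$. Dually, the equivalence $\widetilde{\nu}_k = \widetilde{\nu'}_k$ corresponds to merging two-sided $f$-orbits under the larger groups $NH_k$ and $NH_m$, which is exactly the coarsening of orbits forced by passing from $H_\psi$ to the ambient $I_H(\psi)$ in the method of little groups; this gives the stated $K_{[\eta]_k}$. For the character value formula, substituting $\chi_{[\eta]_k}(g) = \sum_{\overline{\nu'}_k = \overline{\eta}_k} \chi_{\nu'}(g)$ for $g \in K_{[\nu]_k}$ and invoking the standard vanishing of the algebra group supercharacter $\chi_{\nu'}$ on the algebra group superclass $K_{\nu}$ (which holds whenever a nesting of a crossing arc with a non-crossing arc occurs across the $k$-boundary) reduces the sum to either zero or a single term proportional to $\chi_\eta(g)$, with proportionality constant $\chi_{[\eta]_k}(1)/\chi_\eta(1)$ by degree counting.

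The main obstacle will be the combinatorial bookkeeping linking the three equivalence relations: on supercharacter-labels ($\overline{\eta}_k$), on superclass-labels ($\widetilde{\eta}_k$), and on the $(H_k\times H_m)$- and two-sided ideal-subgroup orbits of the functionals $\lambda$ and $\mu$. The asymmetry between $\overline{\cdot}_k$ and $\widetilde{\cdot}_k$ reflects the left/right asymmetry between supercharacter induction and superclass two-sided orbits in the algebra group theory, and carefully tracking the normalizing constants (absorbed into the $\chi_\eta$ convention of Theorem~\ref{sctnk} but appearing explicitly as the ratio $\chi_{[\eta]_k}(1)/\chi_\eta(1)$ in the value formula) is the most error-prone step.
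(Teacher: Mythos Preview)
Your overall strategy matches the paper's: parameterize the $SCT(n,k)$ supercharacters as $\psi_\lambda \rtimes \chi_\mu$ with $\chi_\mu$ a supernormal supercharacter of $H_{\psi_\lambda}$, identify this combinatorially with $\chi_{[\eta]_k}$, and then read off the superclasses and character values. The paper carries out the supercharacter identification essentially as you describe, using \cite[Lemma~4.5]{andrews1} to unfold $\psi_\lambda \rtimes \chi_\mu$ as a sum of $\theta\circ\beta\circ f$ over the appropriate set of functionals $\beta\in\frak g^*$, rather than routing through Theorem~\ref{thmmlgtypea}.

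Where your sketch diverges, and where there is a gap, is the character-value formula. You propose to expand $\chi_{[\eta]_k}(g)=\sum_{\overline{\nu'}_k=\overline\eta_k}\chi_{\nu'}(g)$ and use termwise vanishing of the algebra-group supercharacters to ``reduce the sum to a single term proportional to $\chi_\eta(g)$.'' In the non-vanishing case this sum has many nonzero terms, not one, and you have not explained why they add to a scalar multiple of $\chi_\eta(g)$; degree counting only fixes the scalar once proportionality is already established. The paper sidesteps this by observing that $H_{\psi_\lambda}$, being a product of two-sided ideal subgroups, is normal in $H$, so $NH_{\psi_\lambda}$ is normal in $G$. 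Hence $\chi_{[\eta]_k}$, as a character induced from $NH_{\psi_\lambda}$, vanishes identically off $NH_{\psi_\lambda}$; the combinatorial nesting condition on $(\eta,\nu)$ is then identified with the membership condition $g\in NH_{\psi_\lambda}$, and the stated formula drops out directly from the induced-character description. The superclasses $K_{[\nu]_k}$ are then read off as the level sets of this formula, rather than obtained from a separate dual orbit argument as you propose.
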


\begin{proof} Let $G = UT_n(\mathbb{F}_q)$ and $\frak{g} = \frak{ut}_n(\mathbb{F}_q)$. Choose $\alpha \in \frak{g}^*$, and let $\lambda = \alpha|_{\frak{n}}$. We have that
\[
        H_{\psi_\lambda} = \bigg\{h \in H \;\bigg|\;\begin{array}{l} h_{ij} = 0 \text{ if there exist } i' \text{ and } j' \text{ with } \\ \lambda(e_{i'j'}) \neq 0 \text{ and } i' \leq i<k \text{ or } k<j \leq j'\end{array}\bigg\},
\]
where $\psi_\lambda = \theta \circ \lambda \circ f$. Define $\frak{h}_{\psi_\lambda} = f(H_{\psi_\lambda})$, and let $\mu = \alpha|_{\frak{h}_{\psi_\lambda}}$. The supercharacter of $SCT(n,k)$ associated to $\lambda$ and $\mu$ is
\begin{align*}
        \psi_\lambda \rtimes \sum_{\beta \in H_{\psi_\lambda}\mu H_{\psi_\lambda}}\theta \circ \beta \circ f
        & = \text{Ind}_{NH_{\psi_\lambda}}^G\bigg(\sum_{\beta \in S}\theta \circ \beta \circ f \bigg) \\
        & = \sum_{\substack{\beta \in \frak{g}^* \\ \beta|_{\frak{n}\oplus\frak{h}_{\psi_\lambda}} \in S}}\theta \circ \beta \circ f,
\end{align*}
where $S = NH_{\psi_\lambda}(\lambda\oplus\mu) NH_{\psi_\lambda}$, by \cite[Lemma~4.5]{andrews1}. Note that if $\beta \in \frak{g}^*$ and $\theta \circ \beta \circ f$ is a constituent of $\chi_\eta$, then $\beta \in \{\gamma \in \frak{g}^* \mid \gamma|_{\frak{n}\oplus\frak{h}_{\psi_\lambda}} \in NH_{\psi_\lambda}(\lambda\oplus\mu) NH_{\psi_\lambda}\}$ if and only if $\overline{\eta}_k = \overline{\nu}_k$, where $\theta \circ \alpha \circ f$ is a constituent of $\chi_\nu$. It follows that the supercharacters of $SCT(n,k)$ are $\{\chi_{[\eta]_k}\}$.

\bigbreak

There are no $i \overset{a}{\frown} j \in \eta$ and $i' \overset{a'}{\frown} j' \in \nu$ with $i<i'<j'\leq k<j \text{ or } i \leq k<i'<j'<j$ if and only if $g \in NH_{\psi_\lambda}$. Furthermore, $\chi_{[\eta]_k}$ is the character of $G$ obtained by inducing the supernormal supercharacter of $NH_{\psi_\lambda}$ associated to $\lambda \oplus \mu$ to $G$. It follows that
\[
        \chi_{[\eta]_k}(g) = \left\{\begin{array}{ll}
        \frac{\chi_{[\eta]_k}(1)}{\chi_{\eta}(1)}\chi_{\eta}(g)  & \quad\text{if there are no } i \overset{a}{\frown} j \in \eta \text{ and } i' \overset{a'}{\frown} j' \in \nu \\
        {} & \quad\text{with }i<i'<j'\leq k<j \text{ or } i \leq k<i'<j'<j, \\
        {}&{}\\
        0 &\quad\text{otherwise,}\end{array}\right.
\]
and the superclasses of $SCT(n,k)$ are the sets $K_{[\nu]_k}$.
\end{proof}

Observe that
\begin{enumerate}
\item $SCT(n,0)=SCT(n,n)$, and this supercharacter theory is just the usual algebra group supercharacter theory,
\item $SCT(n,k)$ is strictly coarser than the usual algebra group supercharacter theory for all $1 \leq k \leq n-1$, and
\item If $1 \leq k < k' \leq n-1$, then $SCT(n,k)$ and $SCT(n,k')$ are incomparable.
\end{enumerate}

If $S \subseteq [n]$, we define $SCT(n,S)$ to be the lattice-theoretic join of the supercharacter theories $SCT(n,k)$ for all $k \in S$. The following results follow from Lemma~\ref{lemscljoin}.
\begin{enumerate}
\item If $S \subsetneq T$, then $SCT(n,S)$ is strictly finer than $SCT(n,T)$.
\item If $S$ and $T$ are incomparable, then so are $SCT(n,S)$ and $SCT(n,T)$.
\item The supercharacters and superclasses of $SCT(n,[n])$ are indexed by the nonnesting $\mathbb{F}_q$-set partitions.
\end{enumerate}
The supercharacter theory $SCT(n,[n])$ is investigated in more depth by the author in \cite{andrews3}.

\section{Further directions}

Unfortunately, the method of little groups cannot be applied to the groups $UO_{2n+1}(\mathbb{F}_q)$ or $UU_{2n+1}(\mathbb{F}_{q^2})$ as there is no semidirect product decomposition with an abelian normal subgroup. There is some hope that a modification of the method could work, however; $UO_{2n+1}(\mathbb{F}_q)$ has a normal subgroup $N \cong \frak{o}_n(\mathbb{F}_q)$ with $G/N \cong UT_{n+1}(\mathbb{F}_q)$ (a similar statement can be made for $UU_{2n+1}(\mathbb{F}_{q^2})$). In these cases it is not in general true that each $\psi \in \text{Irr}(N)$ can be extended to a character of $I_G(\psi)$. Hopefully the construction can be modified to a group $G$ with abelian normal subgroup $N$ even when $N$ is not the normal complement of any subgroup of $G$ and irreducible characters of $N$ cannot be extended to their inertial subgroups. If so, it seems that the supercharacter theories of the unipotent orthogonal and unitary groups in odd dimension could be recovered from right ideal supercharacter theories.

\bigbreak

It would be also of interest to investigate finer supercharacter theories in the orthogonal, symplectic, and unitary types. Note that the choices of $H_\psi$ in Theorem~\ref{sctofumlg} are not always maximal, and as such there is a finer supercharacter theory that can be constructed from the same lattice. Furthermore, these supercharacters will each be of the form
\[
        \chi = c\sum_{\lambda \in S} \theta \circ \lambda \circ f
\]
for some constant $c$ and some subset $S \subseteq \frak{u}^*$.

\bigbreak

The supercharacter theory $SCT(n,[n])$ described in Section~\ref{seccoarsersct} has supercharacters and superclasses indexed by the nonnesting $\mathbb{F}_q$-set partitions. In \cite{andrews3}, the author calculates the supercharacter table of this supercharacter theory and generalizes it to arbitrary pattern groups. This leads to the construction of a Hopf monoid on the nonnesting supercharacter theories that is analogous to the one constructed in \cite{MR3117506}.

\section{Acknowledgements}

I would like to thank Nat Thiem for his numerous helpful suggestions and insights.

\bibliography{bibfile}
	\bibliographystyle{plain}

\end{document}